\date{May 6, 2015}
\newtheorem{dummy}{anything}[section]
\newtheorem{theorem}[dummy]{Theorem}
\newtheorem*{thma}{Theorem A}
\newtheorem*{thmb}{Theorem B}
\newtheorem*{thmc}{Theorem C}
\newtheorem*{thmd}{Theorem D}
\newtheorem{lemma}[dummy]{Lemma}
\newtheorem{proposition}[dummy]{Proposition}
\newtheorem{corollary}[dummy]{Corollary}
\theoremstyle{definition}
\newtheorem{definition}[dummy]{Definition}
  \newtheorem{remark}[dummy]{Remark}
    \newtheorem{question}[dummy]{Question}
  \newtheorem*{acknowledgement}{Acknowledgements}
\newcommand
{\eqncount}{\setcounter{equation}{\value{dummy}}%
\addtocounter{dummy}{1}}
\newcommand{\cB}{\mathcal B}
\newcommand{\cN}{\mathscr N}
\newcommand{\cS}{\mathscr S}
\newcommand{\bZ}{\mathbb Z}
\newcommand{\bQ}{\mathbf Q}
\newcommand{\bP}{\mathbf P}
\newcommand{\PP}{\mathbb R \bP}
\newcommand{\bbR}{\mathbb R}
\newcommand{\bbZ}{\mathbb Z}
\newcommand{\ZZa}{{\bbZ[\bZ/2]}}
\DeclareMathOperator{\Coker}{coker}
\newcommand{\MK}{\mathbb M_{K}}
\newcommand{\cy}[1]{\bZ/{#1}}
\newcommand{\la}{\langle}
\newcommand{\ra}{\rangle}
\newcommand{\vv}{\, | \,}
\newcommand{\bd}{\partial}
\newcommand{\del}{\partial}
\newcommand{\dlra}[1]{\stackrel{#1}{\longrightarrow}}
\DeclareMathOperator{\sign}{sign}
\DeclareMathOperator{\id}{id}
\newcommand{\La}{\Lambda}
\newcommand{\Ze}{\bZ_{\varepsilon}}
\newcommand{\Za}{\bZ_{+}}
\newcommand{\Zb}{\bZ_{-}}
\DeclareMathOperator{\inn}{int}
\newcommand{\wM}{\widetilde{M}}
\newcommand{\wt}[1]{\widetilde{#1}}
\newcommand{\SW}{D}
\newcommand{\sw}{S}
\newcommand{\DD}{\wh D}
\newcommand{\CSS}{\cS_{\cat}}
\newcommand{\CTSS}{\cS^{\bf t}_{\cat}}
\newcommand{\CTNI}{\cN^{\bf t}_{\cat}}
\newcommand{\CNI}{\cN_{\cat}}
\DeclareMathOperator{\pt}{\mu_X}
\newcommand{\Nt}{\eta^{\bf t}}
\newcommand{\Nv}{\eta}
\DeclareMathOperator{\Id}{id}
\newcommand{\qsn}{QS^0}
\newcommand{\qsna}{(QS^0)}
\newcommand{\wh}{\widehat}
\newcommand{\Aut}{\mathcal{E}}
\newcommand{\Ad}{\textup{Ad}}
\newcommand{\MMK}{\mathcal{KM}_{4k{+}2}}
\newcommand{\MMKK}[1]{\mathcal{KM}_{#1}}
\newcommand{\xra}{\xrightarrow}
\newcommand{\an}[1]{\langle#1\rangle}
\newcommand{\WW }{W}
\newcommand{\VV }{V}
\newcommand{\MM}{M}
\newcommand{\NN}{N}
\newcommand{\XX}{X}
\newcommand{\YY}{Y}
\newcommand{\ZZ}{Z}
\newcommand{\cat}{{CAT}}
\newcommand{\pa}{p(\alpha)}
\newcommand{\pw}{p(w)}
\begin{document}
\title{Finite group actions on Kervaire manifolds}
\author{Diarmuid Crowley}
\address{Institute of Mathematics, University of Aberdeen, Aberdeen AB24 3UE, UK}
\email{dcrowley@abdn.ac.uk}
\author{Ian Hambleton}
\address{Department of Mathematics, McMaster University,  Hamilton, Ontario L8S 4K1, Canada}
\email{hambleton@mcmaster.ca }

\keywords{Finite group actions, Kervaire manifold, piecewise linear topology, surgery theory, smoothing theory}
\subjclass{57S25, 57Q91, 57R67, 57R10}
\thanks{Research partially supported by NSERC Discovery Grant A4000. The authors would like  to thank the Max Planck Institut f\"ur Mathematik and the Hausdorff Research Institute for Mathematics in Bonn for hospitality and support.}

\begin{abstract} Let $\MK^{4k{+}2}$ be the Kervaire manifold:  a closed, piecewise linear (PL)  manifold with Kervaire invariant $1$ and the same homology as the product $S^{2k{+}1} \times S^{2k{+}1}$ of spheres. We show that a finite
group of odd order acts freely on $\MK^{4k{+}2}$ if and only if it acts freely on
$S^{2k{+}1} \times S^{2k{+}1}$. 
 If $\MK$ is smoothable, then each smooth structure on $\MK$ admits a free smooth involution.  If $k \neq 2^j-1$, then $\MK^{4k{+}2}$ does not admit any free TOP involutions.
Free ``exotic" (PL)  involutions are constructed on $\MK^{30}$, $\MK^{62}$, and
$\MK^{126}$. Each smooth structure on $\MK^{30}$ admits a free 
$\bZ/2 \times \bZ/2$  action.
\end{abstract}

\maketitle

\section{Introduction}

One of the main themes in geometric topology is the study of smooth manifolds and their piece-wise linear (PL) triangulations. Shortly after Milnor's discovery \cite{milnor5} of exotic smooth $7$-spheres, Kervaire \cite{kervaire2} constructed the first example (in dimension 10) of a PL-manifold  with no differentiable structure, and a new exotic smooth $9$-sphere $\Sigma^9$. 

The construction of Kervaire's 10-dimensional manifold was generalized to all dimensions of the form $m \equiv 2 \pmod 4$, via ``plumbing" (see \cite[\S 8]{hnk1}).
Let $P^{4k{+}2}$ denote the smooth, parallelizable manifold of dimension $4k{+}2$, $k \geq 0$, constructed by plumbing two copies of the the unit tangent disc bundle of $S^{2k{+}1}$. The  boundary $\Sigma^{4k+1} = \bd P^{4k{+}2}$ is a smooth homotopy sphere, now usually called the \emph{Kervaire sphere}. Since $\Sigma^{4k+1}$ is always PL-homeomorphic to the standard sphere  $S^{4k+1}$ (by Smale \cite{smale61}), one can cone off the boundary of $P^{4k{+}2}$ to obtain the \emph{Kervaire manifold}, denoted $\MK^{4k{+}2}$, with its canonical PL-structure. 

By construction, $\MK^{4k{+}2}$ is a closed, almost parallelizable, PL-manifold with the same homology as the product
$S^{2k{+}1}\times S^{2k{+}1}$ of spheres and it is simply-connected if $k > 0$.  
It admits a Wu structure $f_K$ with Arf invariant one (as defined by Kervaire \cite[\S 1]{kervaire2},  Kervaire-Milnor \cite[\S 8]{kervaire-milnor1}, and Browder \cite[\S 1]{browder1}). 
Moreover, $\MK^{4k{+}2}$ is \emph{minimal} with respect to these properties.

\smallskip
In this paper, we consider symmetries of the Kervaire manifolds.
\begin{question} 
Does $\MK^{4k{+}2}$ admit any (PL) free orientation-preserving finite group actions~? If $\MK^{4k{+}2}$ is smoothable, does it admit any smooth free actions ?
\end{question}

If $\MK^{4k{+}2}$ is smoothable, then the Wu structure  $f_K$ is given by a framing, and the framed manifold $(\MK^{4k{+}2}, f_K)$ represents  an element in the stable stem  $\pi^S_{4k{+}2}$
by the Pontrjagin-Thom isomorphism. Conversely, any element in $ \pi^S_{4k{+}2}$ is represented by a smooth, closed, framed $(4k{+}2)$-manifold. However, Browder \cite{browder1} showed the Arf invariant of such manifolds is zero except in the special dimensions where $k = 2^j -1$, for some $j \geq 1$. Now the Arf invariant is preserved under framed cobordism, 
and we use the standard notation:
  $$\theta_{j} \subset\pi^S_{2^{j+1}-2}$$
for  the subset of elements represented by smooth framed manifolds with Arf invariant one. In this notation, $\MK^{4k{+}2}$ is smoothable if and only if $\theta_{j+1}$ is non-empty, implying $k = 2^j-1$. 
  

It is now known that the Kervaire manifolds are smoothable (or equivalently that the Kervaire sphere is standard) in very few dimensions.   Kervaire \cite{kervaire2}  showed that $\MK^{10}$ does not admit any smooth structure, and 
Browder \cite{browder1} showed that $\Sigma^{4k+1}$ could only be diffeomorphic to $S^{4k+1}$ in the special dimensions 
$$4k{+}2=2^{j+2}-2 = \dim \theta_{j+1},$$ where $k = 2^j-1$. Note that when the Kervaire sphere is standard, the smooth structure resulting from attaching a $(4k{+}2)$-disk is not unique, since we may take connected sums with homotopy spheres in $\Theta_{4k{+}2}$, but all of the resulting smooth Kervaire manifolds are  stably parallelizable (by obstruction theory).

Recently Hill, Hopkins and Ravenel \cite{hill-hopkins-ravenel1, hill-hopkins-ravenel2} have shown that 
$\Sigma^{4k+1}$ is not diffeomorphic to $S^{4k+1}$ if $k = 2^j-1$ and  $j \geq 6$. 
Earlier work of Barrett, Jones and Mahowald \cite{barratt-jones-mahowald1}, \cite{barratt-jones-mahowald2} showed that $\Sigma^{4k+1}$ is standard up to dimension 62 
($ j \leq 4$).  The 125-dimensional case is open.

\smallskip
Here is a summary of the  results, first for involutions.

\begin{thma} Let $\MK^{4k{+}2}$ be a closed, oriented (PL) Kervaire manifold.
\begin{enumerate}
\item
If $\MK^{4k{+}2}$ is smoothable, then every smooth manifold $N$ with $N \cong_{PL} \MK^{4k{+}2}$
admits
 a smooth, free orientation-preserving involution.
\item Any smooth structure on $\MK^{30}$ admits a free, orientation-preserving smooth action of the group  $\bZ/2 \times \bZ/2$.
\item  If $4k{+}2 \neq 2^{j+2} -2$, then $\MK^{4k{+}2}$ does not admit any free (TOP) involutions. 
\end{enumerate}
\end{thma}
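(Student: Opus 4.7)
My plan is to prove the three parts using surgery theory and stable (equivariant) homotopy theory. Parts (i) and (ii) are constructive, while (iii) is an obstruction argument.

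\textbf{Parts (i) and (ii).} In the smoothable case $4k+2 = 2^{j+2} - 2$, there is a closed smooth framed manifold $F^{4k+2}$ representing the Arf-$1$ class in $\pi^S_{4k+2}$. By the Kahn--Priddy theorem, the transfer $\pi^S_{4k+2}((B\bZ/2)_+) \to \pi^S_{4k+2}$ is surjective on $2$-primary elements in positive degrees, so this Arf-$1$ class lifts to a framed manifold $\widetilde Y$ carrying a free orientation-preserving $\bZ/2$-action. The cover $\widetilde Y$ is framed-cobordant to $F^{4k+2}$. I would then apply equivariant surgery below the middle dimension on the quotient $Y$, and at the middle dimension use Wall's realization of elements of $L^h_{4k+2}(\bZ[\bZ/2])$ to correct the surgery obstruction, arranging the universal cover of the resulting manifold to be diffeomorphic to a prescribed smoothing $N$ of $\MK$. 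Varying the smoothing through $\Theta_{4k+2}$ (via equivariant connect-sum of a pair $\Sigma \# (-\Sigma)$ on the double cover) then hits every smooth structure on $\MK$. Part (ii) is the same argument with $(B(\bZ/2 \times \bZ/2))_+$ in place of $(B\bZ/2)_+$, using the surjectivity of the corresponding transfer on the $2$-primary part of $\pi^S_{30}$ and the computation of $L^h_{30}(\bZ[\bZ/2 \times \bZ/2])$.

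\textbf{Part (iii).} Assume for contradiction that $\tau : \MK^{4k+2} \to \MK^{4k+2}$ is a free orientation-preserving TOP involution with quotient $X$. The Wu structure $f_K$ descends to a Wu structure $f$ on $X$, so $(X,f)$ defines a class in a PL/TOP Wu-bordism group $\Omega^{\text{Wu}}_{4k+2}((B\bZ/2)_+)$ whose transfer is $[\MK, f_K]$, a class with Arf invariant $1$. I would then reduce to the framed case: after stabilization and comparison with framed bordism (via $BSPL \to BO$ and the Kirby--Siebenmann obstruction), a Wu-bordism class whose double cover has Arf invariant $1$ yields a framed Arf-$1$ element of $\pi^S_{4k+2}$. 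By Browder's theorem (and its high-dimensional extension by Hill--Hopkins--Ravenel), this is possible only when $4k+2 = 2^{j+2} - 2$, contradicting our hypothesis.

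\textbf{Main obstacle.} The heart of the difficulty lies in part (iii), specifically the reduction to Browder's theorem: translating a TOP Wu-bordism class on $X$ with double cover $\MK$ into a framed Arf-$1$ element of $\pi^S_{4k+2}$ requires careful bookkeeping of Kirby--Siebenmann invariants, the interplay between PL and TOP Kervaire invariants, and the transfer in Wu-bordism along $X \to B\bZ/2$. For parts (i) and (ii) the routine work is concentrated in verifying transfer surjectivity and in checking that the equivariant surgery step can be adjusted to realize every element of $\Theta_{4k+2}$; I expect the equivariant surgery argument to go through cleanly in these cases because the relevant $L$-groups and their transfers to $L^h_{4k+2}(\bZ) = \bZ/2$ are explicit.
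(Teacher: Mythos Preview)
Your outline for parts (i) and (ii) is essentially the paper's approach: Brown's theorem (built on Kahn--Priddy) produces a framed manifold with free involution in the right framed cobordism class, and equivariant framed surgery reduces it to a Kervaire manifold. One correction: your mechanism for hitting \emph{every} smooth structure is wrong. Taking the equivariant connected sum with $\Sigma \,\#\,(-\Sigma)$ on the double cover adds the standard sphere and changes nothing; and an honest equivariant connected sum forces you to add $\Sigma\,\#\,\Sigma$ upstairs, not $\Sigma$. The paper instead observes that the framed diffeomorphism classes of $2k$-connected Arf-one manifolds are in bijection with $\theta_{j+1}$ (by a uniqueness result of Freedman for framed-cobordant highly connected manifolds), so one simply applies Brown's theorem to the specific element of $\theta_{j+1}$ corresponding to the desired smoothing $N$, rather than constructing one involution and then trying to move it around.

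Part (iii) has a genuine gap, and your proposed route is both harder than necessary and does not close. The step ``a Wu-bordism class whose double cover has Arf invariant $1$ yields a framed Arf-$1$ element of $\pi^S_{4k+2}$'' fails: $\MK^{4k+2}$ is a PL Wu-manifold with Arf invariant one in \emph{every} dimension $4k+2$, so the existence of such a Wu-bordism class does not, by itself, force $4k+2 = 2^{j+2}-2$. Browder's theorem is a statement about \emph{smooth framed} manifolds, and there is no passage from your TOP Wu-bordism class to a smooth framing without already assuming smoothability --- which is exactly what is at issue. The paper's argument is far simpler and avoids all of this: a manifold admitting a free involution bounds (the orientation line bundle over its quotient), and Brumfiel--Madsen--Milgram proved that $\MK^{4k+2}$ is an unoriented topological boundary if and only if $k = 2^{j}-1$. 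That single citation finishes (iii); you should replace your Wu-bordism reduction with it.
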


The first part of Theorem A will be proved in 
Theorem \ref{thm:Z2-action}, where the statement includes frame-preserving involutions,  and the second part in Theorem \ref{thm:dim30}. We remark that the last assertion of Theorem A is an immediate consequence of a result of Brumfiel, Madsen and Milgram \cite[Theorem 1.3]{brumfiel-madsen-milgram1}, which proves that $\MK^{4k{+}2}$ is an (unoriented) topological boundary if and only if $k = 2^{j}-1$. Since a manifold which admits a free involution bounds the unit orientation line bundle over its orbit space, even topological or orientation-reversing involutions are ruled out except in
 the ``Arf invariant dimensions"
$4k{+}2 = 2^{j+1}-2$.
In these cases, we have the following inductive construction.

\begin{thmb}  
Suppose that the set $\theta_j$  contains an element of order two, for some $j \geq 0$. Then  $\MK^{4k{+}2}$ admits  free, orientation-preserving (PL) involutions,  for $4k{+}2 = \dim \theta_{j+1}$.
\end{thmb}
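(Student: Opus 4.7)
The plan is to realize the free involution on $\MK^{4k+2}$ by an equivariant framed-plumbing/surgery construction whose only non-equivariant input is the order-two element $\alpha\in\theta_j$. Write $m=2^{j+1}-2$, so $\dim\MK=2m+2$ and $2k+1=m+1$. The hypothesis gives a closed, smooth, framed manifold $(N^m,f_N)$ with $[N,f_N]=\alpha$ and $\operatorname{Arf}(N,f_N)=1$. The condition $2\alpha=0$ in $\pi^S_m$ lets us choose a framed null-cobordism $W^{m+1}$ with $\partial W = N\sqcup N$. The swap involution $\sigma$ on $\partial W$ is free and orientation preserving, and the pair $(W,\sigma|_{\partial W})$ is the equivariant datum that promotes $\alpha$ to a $\bZ/2$-equivariant framed bordism class.

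Next I would use $(N,f_N,W,\sigma)$ to construct an equivariant analogue of the Kervaire plumbing $P^{4k+2}$. Recall that $P^{4k+2}$ is built by plumbing two copies of the unit tangent disk bundle $DTS^{2k+1}$ along a coordinate patch, where each copy represents one of the two generators of $H_{2k+1}(P;\bZ)$. In the equivariant version, I would replace these two handles by a single pair of copies of a thickening derived from $W$, glued by an equivariant plumbing so that the free involution on the model exchanges the two handles. Thus the ambient closed manifold $\widetilde M^{2m+2}$ carries a free, orientation-preserving $\bZ/2$-action and its quotient $\widetilde M /(\bZ/2)$ has $\pi_1=\bZ/2$.

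Third, I would identify $\widetilde M$ with $\MK^{4k+2}$. Because the $\bZ/2$-action is free and the quotient is orientation preserving, $\widetilde M$ is stably parallelizable (obstruction theory shows the Wu structure of $\MK$ transfers to the cover), so it inherits a Wu structure $\widetilde f_K$. The characterization of Kervaire manifolds (simply connected, same homology as $S^{2k+1}\times S^{2k+1}$, Arf invariant one, together with the PL classification for such) identifies $\widetilde M$ with $\MK^{4k+2}$ once we verify: (a) $\widetilde M$ is simply connected with the homology of $S^{2k+1}\times S^{2k+1}$, which follows from the handle structure of the equivariant plumbing above the quotient and the fact that $W$ contributes no extra homology on the cover beyond the two plumbed handles; and (b) $\operatorname{Arf}(\widetilde M,\widetilde f_K)=1$.

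The main obstacle is (b), the Arf invariant computation. The strategy is to reduce the Arf invariant of $\widetilde M$ to the Arf invariant of $(N,f_N)$ by a transfer or Browder-type product-formula argument, where the order-two hypothesis ensures the $\bZ/2$-equivariant quadratic refinement splits compatibly with the plumbing and the symmetric contribution of the two exchanged handles is exactly $\operatorname{Arf}(N,f_N)=1$. Once Arf invariant one is established, the PL identification $\widetilde M\cong_{PL}\MK^{4k+2}$ is automatic, and the free orientation-preserving involution on $\widetilde M$ supplies the desired action. The construction is manifestly functorial in the choice of $(N,f_N,W)$, so it is genuinely an inductive mechanism that turns an order-two Arf-one class in dimension $m$ into a free involution on the Kervaire manifold in dimension $2m+2$.
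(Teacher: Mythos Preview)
Your proposal has a genuine structural gap: the involution you describe cannot exist on $\MK^{4k+2}$. You say the free involution ``exchanges the two handles'', i.e.\ swaps the two plumbed copies of $DTS^{2k+1}$, hence swaps the two generators of $H_{2k+1}(\MK;\bZ)$. But an elementary spectral-sequence argument (the Lemma opening Section~\ref{sec:highlyconnected} of the paper) shows that for \emph{any} closed oriented $(4k+2)$-manifold $M$ with $\pi_1(M)=\bZ/2$ and $\pi_i(M)=0$ for $1<i<2k+1$, the $\bZ[\bZ/2]$-module $\pi_{2k+1}(M)$ has the form $r\Lambda\oplus\bZ_+\oplus\bZ_+$. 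When the universal cover is $\MK^{4k+2}$ this module has $\bZ$-rank $2$, forcing $r=0$; hence the covering involution must act \emph{trivially} on $H_{2k+1}$. A swap action would make $H_{2k+1}(\widetilde M)$ a free $\Lambda$-module of rank one, which is excluded. So the equivariant plumbing you sketch, with $\bZ/2$ interchanging the two handles, cannot produce the Kervaire manifold.

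Beyond this, the construction is too vague to evaluate: you take a framed null-cobordism $W^{m+1}$ of $N\sqcup N$ with $m=2k$, so $\dim W=2k+1$, and propose to replace the $(4k+2)$-dimensional plumbing pieces $DTS^{2k+1}$ by ``a thickening derived from $W$''. No mechanism is given for how a $(2k+1)$-dimensional cobordism becomes a $(4k+2)$-dimensional handlebody, nor how the Arf invariant of $(N,f_N)$ transfers to the Arf invariant of $\widetilde M$. The paper's route is entirely different and uses the order-two hypothesis in a much sharper way: by Barratt--Jones--Mahowald (Theorem~\ref{thm:B-J-M}), $\theta_j$ contains an element of order two if and only if the Whitehead square $[\iota_{n+1},\iota_{n+1}]=2\alpha$ for some $\alpha\in\pi_{2n+1}(S^{n+1})$. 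One then builds the quotient directly as a twisted double $W\cup_g W$ with $W=D(\xi)$ for an admissible bundle $\xi$ over $\PP^{2k+1}$, where $g$ is a PL-homeomorphism homotopic to an explicit pinch map $p(\alpha)$ (Definition~\ref{def:p_alpha}). The hard step is Theorem~\ref{thm:M}, showing $p(\alpha)$ has trivial PL normal invariant; this uses the Brumfiel--Madsen--Milgram computation of $i^*\colon H^*(G/PL;\bZ/2)\to H^*(SG;\bZ/2)$. The identification of the universal cover with $\MK^{4k+2}$ then follows from Theorem~\ref{thm:homotopy_type} and the fact that $\mathcal S_{PL}(\MK^{4k+2})$ is a single point (Lemma~\ref{lem:Kervaire_manifold}). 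None of these ingredients appears in your outline.
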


For $j\leq 4$, when the Kervaire manifolds of dimension $\dim \theta_{j+1}$ are smoothable, Theorem A already provides a smooth, free, frame-preserving involutions. However, the  construction in Theorem B produces a wide variety of non-smoothable involutions in dimensions $30$ and above (see Theorem D and Theorem \ref{thm:thmb_detail}).
Moreover, the following result (for $j=5$)  gives a new symmetry of the Kervaire manifold in dimension $126$.

\begin{corollary}
 $\MK^{126}$ admits a free, orientation-preserving (PL) involution.
\end{corollary}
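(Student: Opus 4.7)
The plan is to derive the corollary as an immediate application of Theorem B in the case $j = 5$. For this value of $j$ one has $\dim \theta_{j+1} = 2^{j+2} - 2 = 126$, which matches the target dimension; so Theorem B, once its hypothesis is verified, produces the required free, orientation-preserving PL involution on $\MK^{126}$. The whole task therefore reduces to checking the hypothesis of Theorem B: the set $\theta_5 \subset \pi^S_{62}$ of framed cobordism classes with Arf invariant one must contain an element of order two.

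For the first half of that hypothesis --- that $\theta_5$ is non-empty --- I would cite the results of Barratt, Jones and Mahowald \cite{barratt-jones-mahowald1,barratt-jones-mahowald2} already invoked in the introduction, which show that $\Sigma^{61}$ is standard (equivalently, $\theta_5 \neq \emptyset$; note $k = 2^5 - 1 = 31$ and $4k+2 = 62$). For the order-two refinement, I would appeal to the known $2$-primary computations of $\pi^S_{62}$: the Arf-invariant-one classes in this stem form a coset of the kernel of the Arf homomorphism, and the Kervaire classes are detected in the Adams $E_2$-page by $h_5^2$, yielding a $2$-torsion representative. Once some $\theta \in \theta_5$ with $2\theta = 0$ is identified, Theorem B produces the free PL involution directly.

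The main conceptual obstacle in this chain of reasoning is really absorbed into Theorem B itself, which does all of the geometric construction; the corollary contributes only the input verification. Among the input verifications, non-emptiness of $\theta_5$ is already in the cited literature, so the single non-trivial check is that this non-empty $\theta_5$ actually contains an element of order two rather than only higher $2$-power torsion. I would expect this to be a short appeal to standard tables of the $62$-stem at the prime $2$ rather than a new argument; should the order-two statement fail to be explicit in those tables, the fallback is to argue at the level of the Adams spectral sequence that the class detected by $h_5^2$ supports no non-trivial doubling and hence represents a $2$-torsion element of $\pi^S_{62}$.
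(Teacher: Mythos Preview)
Your proposal is correct and matches the paper's own argument: the corollary is deduced directly from Theorem~B with $j=5$, once one knows that $\theta_5$ contains an element of order two. The only difference is in the references for this last fact: the paper cites Lin \cite{lin1} and Kochman--Mahowald \cite{kochman-mahowald1} directly for the existence of an order-two element in $\theta_5$, which is cleaner than your proposed Adams spectral sequence argument via $h_5^2$.
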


Note that $\MK^{126}$ is not currently known to be smoothable, but $\theta_5$ contains an element of order two
(see \cite{lin1}, \cite{kochman-mahowald1}),  and Theorem B applies. 
The situation for $\MK^{254}$ is at present unknown.  Moreover, Hill, Hopkins and Ravenel \cite{hill-hopkins-ravenel1} have shown that the sets $\theta_j$ are all empty, for $j \geq 7$, so the inductive construction of involutions via Theorem B cannot continue.

\smallskip
Here are some remaining problems:

\begin{question} \label{conj:non-action}
Does the Kervaire manifold $\MK^{4k{+}2}$ admits a free, orientation-preserving   (PL)  involution if $4k +2 = \dim \theta_{j+1} \geq 254$~? 
Does $\bZ/2 \times \bZ/2$ act freely on some Kervaire manifold of dimension greater than 30~? 
\end{question}

\smallskip
In contrast, for odd order groups we have:

\begin{thmc} A finite group of odd order acts freely on  $\MK^{4k{+}2}$, preserving the orientation,  if and only if it acts freely on $S^{2k{+}1}\times S^{2k{+}1}$.
\end{thmc}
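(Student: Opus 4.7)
My plan is to prove both implications via the PL surgery exact sequence for the orbit space, using that for a finite group $G$ of odd order every $2$-primary surgery phenomenon distinguishing $\MK^{4k+2}$ from $N := S^{2k+1}\times S^{2k+1}$ is controlled by its image on the (simply-connected) universal cover.

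For the forward direction, let $G$ act freely and orientation-preservingly on $N$, and set $X := N/G$, a closed oriented PL $(4k+2)$-manifold with $\pi_1(X) = G$ and universal cover $N$. The Kervaire manifold $\MK$ represents an element of the simply-connected PL structure set $\mathcal{S}^{PL}(N)$ via a degree-one normal map $\MK \to N$, whose normal invariant $\kappa \in [N, G/PL]$ generates the Kervaire summand $H^{4k+2}(N;\bZ/2) = \bZ/2$. Because $G$ preserves orientation and $|G|$ is odd, the standard transfer argument for cohomology yields $H^{4k+2}(X;\bZ/2) \cong H^{4k+2}(N;\bZ/2)^G$, producing a class $\kappa_X \in [X, G/PL]$ with $p^*\kappa_X = \kappa$ under the covering projection $p : N \to X$. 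The equivariant surgery obstruction $\sigma(\kappa_X) \in L_{4k+2}(\bZ[G])$ is $2$-primary, and at the prime $2$ the odd-order isomorphism $L_{4k+2}(\bZ[G])_{(2)} \cong L_{4k+2}(\bZ)_{(2)} = \bZ/2$ (due to Wall--Bak) reduces it to the simply-connected Arf obstruction, which vanishes because $\kappa$ is already realized by the PL manifold $\MK$. The surgery exact sequence
$$L_{4k+3}(\bZ[G]) \to \mathcal{S}^{PL}(X) \to [X, G/PL] \to L_{4k+2}(\bZ[G])$$
then produces a homotopy equivalence $h : Y \to X$ with normal invariant $\kappa_X$. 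Lifting to universal covers gives a simply-connected PL homotopy equivalence $\tilde h : \tilde Y \to N$ with normal invariant $\kappa$, and because $L_{4k+3}(\bZ) = 0$ implies the inclusion $\mathcal{S}^{PL}(N) \hookrightarrow [N, G/PL]$ is injective, the element $\kappa$ is realized uniquely, forcing $\tilde Y \cong_{PL} \MK$. The deck transformations then give the desired free orientation-preserving PL $G$-action on $\MK$.

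The converse direction is entirely symmetric: a free orientation-preserving $G$-action on $\MK$ yields an orbit space $X' := \MK/G$, and the same surgery-theoretic recipe (with the Kervaire class now viewed as an element of $[\MK, G/PL]$ descending to $X'$) produces a homotopy equivalence onto $X'$ whose universal cover is PL-homeomorphic to $N$. The principal obstacle I expect lies in the two $2$-local identifications for $|G|$ odd---surjectivity of the normal-invariant restriction $[X, G/PL]_{(2)} \twoheadrightarrow [N, G/PL]^G_{(2)}$ in the relevant Postnikov range, and the $L$-theory isomorphism $L_{4k+2}(\bZ[G])_{(2)} \cong L_{4k+2}(\bZ)_{(2)}$---both of which are standard consequences of odd-order computations of Wall and Bak but must be assembled in the precise geometric form needed. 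A secondary point, used to conclude $\tilde Y \cong_{PL} \MK$, is the PL-uniqueness of the Kervaire manifold as a closed simply-connected PL $(4k+2)$-manifold with the homology of $S^{2k+1}\times S^{2k+1}$ and Arf invariant one, which is immediate from $L_{4k+3}(\bZ) = 0$ and the fact that only the Kervaire component contributes to $[N,G/PL]$ for this particularly simple CW structure.
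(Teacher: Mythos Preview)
Your argument contains a fundamental gap: the surgery obstruction of the normal invariant $\kappa_X$ does \emph{not} vanish, and the surgery exact sequence for $X=N/G$ cannot produce a manifold whose universal cover is $\MK$.

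The issue is that $\MK^{4k+2}$ and $N=S^{2k+1}\times S^{2k+1}$ are not homotopy equivalent (their middle-dimensional quadratic forms differ), so $\MK$ does \emph{not} represent an element of the structure set $\mathcal S^{PL}(N)$. A degree-one normal map $\MK\to N$ exists, but its surgery obstruction in $L_{4k+2}(\bZ)\cong\bZ/2$ is precisely the Arf invariant, which equals $1$. Your sentence ``the simply-connected Arf obstruction vanishes because $\kappa$ is already realized by the PL manifold $\MK$'' conflates two different things: the surgery obstruction of a normal map $(M,f,b)$ detects whether $(M,f,b)$ is normally cobordant to a \emph{homotopy equivalence}, not whether the domain $M$ happens to be a manifold. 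Since the transfer $L_{4k+2}(\bZ[G])\to L_{4k+2}(\bZ)$ is an isomorphism on the $\bZ/2$ Arf summand for $|G|$ odd, any $\kappa_X$ with $p^*\kappa_X=\kappa$ has $\sigma(\kappa_X)\neq 0$ as well. Thus the surgery exact sequence for $X$ yields only manifolds homotopy equivalent to $X$, hence with universal cover homotopy equivalent to $N$, never to $\MK$.

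The paper's proof avoids this obstacle by the \emph{propagation} method: rather than working inside $\mathcal S^{PL}(N/G)$, one first constructs a finite simple Poincar\'e complex $Z$ with $\pi_1(Z)=G$ and $\widetilde Z\simeq\MK$, via a homotopy pull-back of localizations $X(q)$ and $\MK(1/q)\times K(G,1)$ over $\MK(0)\times K(G,1)$, where $q=|G|$. This step genuinely changes the homotopy type and is where the hypothesis that $|G|$ is odd enters (through the degree-four map $\MK\to N$, built because the Whitehead square has order two). Only then does one run the surgery exact sequence for $Z$; the $L$-theory input you cite (vanishing of $L^s_{4k+3}(\bZ G)$ and multi-signature detection of $\widetilde L^s_{4k+2}(\bZ G)$) is used at that later stage, where the obstruction really does vanish. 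Your $L$-theoretic ingredients are correct, but they must be applied to the Poincar\'e complex $Z$, not to the manifold $N/G$.
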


The proof of Theorem C
in Theorem \ref{thm:odd order}  is an application of the ``propagation" method of Cappell, Davis, L\"offler and Weinberger (see \cite{davis-loeffler1}, \cite{davis-weinberger2}).
This collection of actions includes some interesting finite groups, such as the extraspecial $p$-groups of rank 2 and exponent $p$ (see \cite{h-unlu1, h-unlu2}). We remark that the Kervaire manifolds $\MK^{4k{+}2}$ in the Arf invariant dimensions
$4k+2 = \dim \theta_j$ do not admit free, orientation-preserving (TOP) actions of non-abelian $p$-groups, for $p$ odd (these are ruled out by the cohomology ring structure: see \cite[Theorem A]{lewis2}).
 
 \medskip
We now discuss the proof of Theorem B.
In Theorem \ref{thm:decomp}, we show that the quotient  manifold 
$\MM:=\MK^{4k{+}2}/\la \tau\ra$ of any free smooth (or PL) involution on a Kervaire manifold
can be decomposed as a twisted double $\MM = \WW \cup_\phi \WW$. 
Here $\WW = D(\xi)$ is the disk bundle of a suitable PL-bundle of dimension $2k{+}1$ over $\PP^{2k{+}1}$, and $\phi\colon \VV \to \VV$ is a diffeomorphism (or PL-homeomorphism) of $\VV :=\bd \WW$.    The bundle $\xi$ is called the \emph{characteristic} bundle for  the involution, and $\xi$ is \emph{admissible} if $\pi^*(\xi) \cong \tau_{S^{2k{+}1}}$ under the standard projection $\pi\colon S^{2k{+}1} \to \PP^{2k{+}1}$ (see Proposition \ref{prop:stable_condition} for a stable recognition criterion). 
 
 In order to prove Theorem B, we construct such a twisted double decomposition, where $\phi$ is a PL-homeomorphism homotopic to an explicitly defined ``pinch map" homotopy equivalence $\pa \colon \VV \to \VV$ (see Theorem \ref{thm:M}). 
 The proof that the pinch map $\pa $ is homotopic to a PL-homeomorphism uses surgery theory as developed by Browder, Novikov, Sullivan and Wall (see \cite{wall-book}, \cite{browder72}). In this way, we construct examples with
 any admissible PL-bundle $\xi$ as the characteristic bundle for the involution
  (see Theorem \ref{thm:thmb_detail}).
 
In Section \ref{sec:tangential_surgery} we recall the main features of surgery theory for \emph{tangential} normal maps, following the work of Madsen, Taylor and Williams \cite[\S 2]{madsen-taylor-williams1}. In Section 
  \ref{sec:normal_invariants_of_pinch_maps}, we apply the theory of \cite{madsen-taylor-williams1} to obtain a 
  general formula for the tangential normal invariant of certain pinch maps (see Lemma \ref{lem:pinch}). 
 This formula may be of independent interest.
  
  The proof of Theorem B is completed in Section \ref{sec:induct}.
  The argument uses results of Brumfiel, Madsen and Milgram 
  \cite{brumfiel-madsen-milgram1} to analyze the image of the tangential normal invariant
  $\Nt(\pa) \in [\VV, SG]$ under the natural maps $SG \to G/O \to G/PL$. It follows that the Poincar\'e complex 
  $Z := \WW \cup_{\pa} \WW$ is homotopy equivalent to a PL-manifold $\MM$, and by our choice of characteristic bundle $\xi$ and pinch map $\pa $, we conclude that the universal covering $\wt{M}$ is PL-homeomorphic to $\MK^{4k{+}2}$ (see Theorem \ref{thm:homotopy_type} and
 Proposition \ref{prop:MK=tildeM}).
 
Finally, in Sections \ref{sec:spivak} and \ref{sec:smoothing_obstruction},  we show that some of the free (PL)  involutions on Kervaire manifolds constructed in Theorem B are ``exotic",  even if the characteristic bundle is a vector bundle (an action of \emph{linear type}).
 
\begin{thmd} 
There exist free orientation-preserving (PL) involutions of linear type on the Kervaire manifolds 
$\MK^{30}$, $\MK^{62}$ and $\MK^{126}$ which are not smoothable
\end{thmd}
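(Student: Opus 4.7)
The plan is to apply the construction of Theorem B (in the refined form of Theorem \ref{thm:thmb_detail}) with a characteristic bundle $\xi$ taken to be an admissible \emph{vector} bundle over $\PP^{2k+1}$; such vector bundles exist in each of the three dimensions of interest, so the construction yields PL free involutions of linear type on $\MK^{30}$, $\MK^{62}$ and $\MK^{126}$. The orbit space of each such involution decomposes as a twisted double $M = \WW \cup_\phi \WW$, where $\WW = D(\xi)$ is a \emph{smooth} disk bundle and $\phi \colon \VV \to \VV$ is a PL-homeomorphism in the PL-homotopy class of the pinch map $\pa$ associated to a chosen element $\alpha \in \theta_j$ of order two.

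Because both halves $\WW$ are already smooth, smoothing the PL manifold $M$ reduces, after adjusting the smoothing of one half if necessary, to replacing $\phi$ in its PL-homotopy class by a diffeomorphism of $\VV$. This smoothing obstruction lives in $[\VV, PL/O]$, and I would detect its non-vanishing from the explicit formula for the tangential normal invariant $\Nt(\pa) \in [\VV, SG]$ provided by Lemma \ref{lem:pinch}. Postcomposing with $SG \to G/O \to G/PL$ yields two normal invariants: the image in $[\VV, G/PL]$ is exactly the class shown in the proof of Theorem B to be realized by a PL homotopy equivalence, so $M$ really is a PL manifold, while the image in $[\VV, G/O]$, equivalently the difference class in $[\VV, PL/O]$, records the smoothing obstruction for the gluing $\phi$.

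Using the splittings of $SG \to G/O \to G/PL$ from \cite{brumfiel-madsen-milgram1}, I would then show that for an appropriate choice of $\alpha \in \theta_j$ the $PL/O$-component of $\Nt(\pa)$ is non-zero in each of the three dimensions $4k+2 \in \{30, 62, 126\}$. The main obstacle will be this explicit non-vanishing: one has to verify that the ``exotic content'' contributed by the Kervaire-invariant-one class $\alpha$ survives pullback to $\VV = \bd D(\xi)$, and is not killed by the indeterminacy coming from a change of smooth structure on $\WW$ or from the kernel of $SG \to PL/O$. This reduces to a cohomological calculation with coefficients in $\pi_\ast(PL/O)$; in the three given dimensions the relevant groups $\Theta_{4k+1}$ and $\Theta_{4k+2}$ are sufficiently non-trivial to give the obstruction, so that the corresponding free PL involution on $\MK^{4k+2}$ is not the underlying PL involution of any smooth free involution.
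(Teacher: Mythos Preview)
Your outline correctly identifies the construction (Theorem~\ref{thm:thmb_detail} with an admissible \emph{vector} bundle $\xi$) and the relevance of the tangential normal invariant $\Nt(\pa)$, but the obstruction you propose is not the one that works, and the endgame is too vague to succeed as stated.

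The gap is in your passage from ``$\Nt(\pa)$ has nonzero image in $[\VV, PL/O]$'' to ``$M$ is not smoothable.'' Even granting that the gluing map $\phi$ is not concordant to a diffeomorphism of $\VV$, this does \emph{not} prevent $M$ from being smoothable: a smooth structure on $M$ need not respect the given twisted-double decomposition, and the indeterminacy you mention (changing the smooth structure on $\WW$, or more generally on the two halves) is exactly what you would have to control. You acknowledge this, but ``reduces to a cohomological calculation with coefficients in $\pi_\ast(PL/O)$'' together with ``$\Theta_{4k+1}$ and $\Theta_{4k+2}$ are sufficiently non-trivial'' is not a proof: non-triviality of a coefficient group says nothing about the survival of a \emph{specific} class under the indeterminacy.

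The paper avoids this entirely by proving something stronger and homotopy-invariant: the Spivak normal fibration $\nu_M$ admits no vector bundle reduction, so $M$ is not even homotopy equivalent to a smooth manifold (Corollary~\ref{cor:not_smooth}). The bridge is Proposition~\ref{prop:Spivak}, which identifies the class $[B(i)\circ\nu_\ZZ]\in[\ZZ,B(G/O)]$ for a twisted double $\ZZ=Q\cup_h Q$ as $\pm\delta_\ZZ(\Nv(h))$, the Mayer--Vietoris boundary of the \emph{smooth} normal invariant of the gluing map. This converts your ``gluing obstruction'' into an honest obstruction on $M$ with no indeterminacy. The computation then localizes to the submanifold $S^n\times S^1\subset\VV$ and shows (Lemma~\ref{lem:sni}) that $f_n^*(\Nv(\pa))=c_{S^{n+1}}^*([\eta\cdot x_j])$; the specific non-vanishing needed is $[\eta\cdot x_j]\neq 0$ in $\Coker(J_{n+1})=\pi_{n+1}(G/O)$ for $j=3,4,5$ (Lemma~\ref{lem:eta_theta_j}), established via Toda's tables and the Adams spectral sequence rather than the Brumfiel--Madsen--Milgram splittings you invoke. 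Note also that the argument requires the hypothesis $w_2(\xi)=0$ (used in the $Sq^2$ calculation of Lemma~\ref{lem:T(zeta)}), a constraint on $\xi$ that your outline does not anticipate.
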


 These actions on $\MK^{4k{+}2}$, for $4k{+}2 
  \in\{ 30, 62, 126\}$ are smoothable over the $(2k{+}1)$-skeleton (see Lemma \ref{lem:(n{+}1)_smooth}),
 but the stable PL-normal bundle 
 $\nu_\MM$ for the orbit space $\MM:=\MK^{4k{+}2}/\la \tau\ra$ does not admit a vector bundle structure 
 (see Corollary \ref{cor:not_smooth}). 
The proof of Theorem D relies on a result about the Spivak normal fibrations of
twisted doubles (see Proposition \ref{prop:Spivak}) which might have other applications.
 
\begin{acknowledgement} We would like to thank Bruce Williams, Jim Davis, Martin Olbermann, John Klein, Mark Behrens and Wolfgang Steimle for useful information. We would also like to thank the referee for helpful comments and suggestions.
\end{acknowledgement}
%

\section{The proof of Theorem A} \label{sec:thma}
The first part of Theorem A has been implicit in the literature since the 1970's (in particular, it does not use any of the recent progress concerning the $\theta_j$).  We first give a more detailed statement of the result.

\begin{theorem} \label{thm:Z2-action}
Suppose that $\MK^{4k{+}2}$ is smoothable. For any smooth, closed manifold 
$\NN \cong_{PL} \MK^{4k{+}2}$, and any  framing $(\NN , f)$ with Arf invariant one,  
$(\NN ,f)$ admits a smooth, free, frame-preserving involution.
\end{theorem}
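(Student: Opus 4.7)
My plan is to construct the required involution by first exhibiting a universal model and then transferring to the given $(N, f)$. I would proceed in two main steps.

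\textbf{Step 1 (equivariant model).} The first step is to produce a smooth, closed, framed manifold $(Y^{4k+2}, f_Y)$ of Arf invariant one that carries a smooth, free, frame-preserving involution $\sigma$. My strategy is to use an equivariant version of the plumbing construction. The plumbing $P^{4k+2}$ of two copies of the unit tangent disk bundle $D(TS^{2k+1})$ carries natural swap and antipodal symmetries; I would combine these, plumbing equivariantly at an antipodal pair of points $\{p,-p\} \subset S^{2k+1}$ and then passing to the quotient or to an appropriate double, obtaining a smooth manifold with a free involution and with boundary the Kervaire sphere (or a variant thereof). In the smoothable case this boundary is standard and can be capped off equivariantly. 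The framing is tracked through each step, and an equivariant framed surgery is performed to guarantee that $(Y, f_Y)$ has Arf invariant one, using the Kervaire--Milnor--Browder description of the Arf invariant via Wu structures.

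\textbf{Step 2 (transfer).} For any given $(N, f)$ with $N \cong_{PL} \MK$ and $\mathrm{Arf}(f) = 1$, the framed class $[N, f] - [Y, f_Y]$ lies in $\ker(\mathrm{Arf}) \subset \pi^S_{4k+2}$ and is represented by a framed homotopy sphere $\Sigma \in \Theta_{4k+2}$. Since $\MK^{4k+2}$ is simply connected and $4k+2 \geq 6$, the framed $s$-cobordism theorem yields a framed diffeomorphism $N \cong Y \# \Sigma$. Flexibility in the model construction of Step 1 (through varying the equivariant gluing data) allows one to realize any smooth structure on $\MK$ together with a compatible free involution, circumventing the question of which homotopy spheres themselves carry free involutions. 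The given framing $f$ is then matched by a $\sigma$-invariant modification, using that the $\sigma$-equivariant framings form a nonempty coset inside the full set of framings on $N$, a consequence of the equivariant classification of stable trivializations of $TN$.

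\textbf{Main obstacle.} The principal difficulty is arranging both the Arf invariant and the free involution simultaneously in the model $Y$. The standard Kervaire plumbing does not admit a free action (the naive swap involution has a disk of fixed points, including a sphere in the boundary), so the equivariant construction must be modified; one must then verify that after these modifications the Kervaire invariant of the resulting framed manifold is indeed one rather than zero. This Arf-invariant computation, via the surgery-theoretic formulation in terms of Wu structures, is the heart of the argument and is what makes the result specifically require the smoothability hypothesis.
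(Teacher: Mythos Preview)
Your proposal has a genuine gap in Step~1 and a circular argument in Step~2. The equivariant plumbing you sketch is not a known working construction: as you yourself note, the swap involution on the standard plumbing has fixed points, and ``plumbing at an antipodal pair and passing to a quotient or double'' does not obviously yield a manifold with the correct homology, framing, and Arf invariant. You have identified the obstacle but not resolved it. Step~2 then relies on undefined ``flexibility'' in the (unconstructed) model to absorb an arbitrary $\Sigma \in \Theta_{4k+2}$; without that, the connected sum $Y \# \Sigma$ has no reason to inherit a free involution, since $\Sigma$ itself need not admit one.

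The paper's route is quite different and avoids both difficulties. The key external input is a theorem of E.~H.~Brown (based on the Kahn--Priddy transfer): \emph{every} class $x \in \pi^S_m$ is represented by a framed manifold $(N,f)$ carrying a smooth, free, frame-preserving involution, and if $x$ has $2$-primary order one may take $N$ to be $([m/2]-1)$-connected. Applied to each element of $\theta_{j+1}$, this immediately produces a $2k$-connected equivariant model in every framed cobordism class, so no transfer step of your type is needed. The remaining work is equivariant framed surgery in the middle dimension: one shows $H_{2k+1}(N;\bZ) \cong r\Lambda \oplus \bZ_+ \oplus \bZ_+$ as a $\Lambda = \bZ[\bZ/2]$-module, that the $\Lambda$-free summand carries a nonsingular equivariant quadratic form representing an element of $L_{4k+2}(\bZ[\bZ/2],+) \cong L_2(\bZ) \cong \bZ/2$, and that after choosing the splitting so this Arf invariant vanishes one can surger the free part away. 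The result is PL-homeomorphic to $\MK^{4k+2}$, and a separate argument (via Freedman's uniqueness theorem for taut framed submanifolds) identifies the set $\MMK$ of framed diffeomorphism classes with $\theta_{j+1}$, so every $(N,f)$ is reached.
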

The main step is due to E.~H.~Brown, Jr.~(based on work of N.~Ray and Kahn-Priddy; see also the remark \cite[p.\,664]{brumfiel-milgram1}).
\begin{theorem}[Brown {\cite{brown_edgar1}}] \label{thm:Brown}
If $\alpha \in \pi^S_m$, $m >0$, then $\alpha$ can be represented by a smooth, closed, framed manifold $(\NN ,f)$, where
$\NN $ admits a smooth fixed-point free involution $\tau$ which preserves the framing $f$.
 If $\alpha \neq 0$ has $2$-primary order, then $(\NN ,f)$ and $\tau$ can be chosen so that $\NN $ is 
 $([m/2]-1)$-connected, and $(\NN ,f)/\la \tau\ra$ is framed cobordant to zero.
\end{theorem}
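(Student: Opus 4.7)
The plan is to split $x \in \pi^S_m$ into its $2$-primary and odd-order summands and handle them separately, with the main content lying in the $2$-primary case. Since $\pi^S_m$ is finite for $m>0$, we may write $x = x_2 + x_{\mathrm{odd}}$ with $x_{\mathrm{odd}}$ of odd order $n$. For the odd part, choose any framed $(X, f_X)$ representing $x_{\mathrm{odd}}$, pick $k$ with $2k\equiv 1\pmod n$, and form $N_{\mathrm{odd}} = \bigsqcup_{i=1}^{2k} X$ with the obvious pairwise-swap involution: this is framed, represents $2k\,x_{\mathrm{odd}} = x_{\mathrm{odd}}$, and carries an evidently free, framing-preserving involution. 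Since the disjoint union of two such representatives is again such a representative, it remains to handle the $2$-primary case, and we may assume $x$ itself is $2$-primary and prove the stronger assertion.

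The key input in the $2$-primary case is the Kahn--Priddy theorem, which states that the stable transfer $\mathrm{tr}\colon \Sigma^\infty \bRP^\infty \to S^0$ associated to the universal double cover $S^\infty \to \bRP^\infty$ surjects onto $\pi^S_m$ for $m>0$ after $2$-localization. Given $2$-primary $x\neq 0$, choose a reduced class $y \in \pi^S_m(\bRP^\infty)$ with $\mathrm{tr}(y) = x$ and represent it by a closed framed manifold $(M^m, f_M)$ equipped with $g\colon M \to \bRP^\infty$; reducedness ensures that $(M, f_M)$, viewed as a framed manifold without $g$, is framed null-cobordant. Let $\pi\colon N \to M$ be the double cover classified by $g$, with free deck transformation $\tau$, and set $f = \pi^* f_M$, which is manifestly $\tau$-invariant. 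The Pontrjagin--Thom realization of the transfer identifies the class $\mathrm{tr}(y) \in \pi^S_m$ with the framed bordism class $[N, f]$, so $(N, f)$ represents $x$, and the orbit manifold $(N,f)/\la\tau\ra = (M, f_M)$ is framed null-cobordant by our choice of $y$.

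It remains to improve the connectivity of $N$. We perform framed surgery on $M$ over $\bRP^\infty$ below the middle dimension. First make $M$ connected, then do $1$-surgery on framed loops in $\ker g_*$ to arrange $g_*\colon \pi_1(M)\xrightarrow{\cong}\bZ/2$, which makes $N$ simply connected. Next, for $2 \le i < m/2$, framed surgery on embedded spheres $S^i \subset M$ representing generators of $\pi_i(M) = \pi_i(N)$ is unobstructed, because $\pi_i(\bRP^\infty) = 0$ guarantees that $g$ extends over the trace of each surgery and because $M$ is stably framed. Iterating through this range forces $\pi_i(M) = 0$ for $2 \le i \le [m/2]-1$, so $N$ becomes $([m/2]-1)$-connected. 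Each step is a framed cobordism over $\bRP^\infty$, so the class $y$, the identity $x = [N,f]$, and the null-cobordism of $(M, f_M)$ are all preserved. The one genuinely nontrivial ingredient is the Kahn--Priddy theorem; the remainder is Pontrjagin--Thom theory together with standard framed surgery below the middle dimension.
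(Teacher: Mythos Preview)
Your argument is correct and is essentially the proof one would expect: the paper does not give its own proof of this statement but simply cites Brown \cite{brown_edgar1}, noting that the result is ``based on work of N.~Ray and Kahn--Priddy''.  Your use of the Kahn--Priddy transfer to produce a framed double cover representing the given $2$-primary class, followed by framed surgery over $\bRP^\infty$ below the middle dimension, is exactly the mechanism behind Brown's theorem, and your handling of the odd-order summand by taking $2k$ copies with $2k\equiv 1\pmod n$ is a clean way to dispose of that case.

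One small point worth making explicit: when you pass to a connected $M$ and then assert that $g_*\colon \pi_1(M)\to\bZ/2$ can be made an isomorphism, you are implicitly using that $g_*$ is already surjective.  This is guaranteed because $y\neq 0$ is reduced: if after connecting $M$ the map $g$ were null-homotopic, then $(M,f_M,g)$ would lie in the image of $\Omega^{\mathrm{fr}}_m(\mathrm{pt})$, and since $[M,f_M]=0$ by reducedness this would force $y=0$.  With that remark added, the argument is complete.
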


We will apply this result to the elements of $\theta_{j+1}$, where $m = 4k{+}2 = 2^{j+2}-2>2$.  Hence we assume that $4k{+}2 \in \{6, 14, 30, 62\}$ and
possibly that $4k{+}2 = 126$ if $\theta_6$ is non-empty.  
Let $\NN$ be a closed oriented smooth $2k$-connected $(4k{+}2)$-manifold.
Since $\pi_{2k{+}1}(BO) = \pi_{4k{+}2}(BO) = 0$, every such $\NN$ admits a framing $f$ of its
stable normal bundle and we let
\[ K(\NN, f) \in \bbZ/2 \]
denote the Kervaire invariant of $(\NN, f)$. 
For example, for $k = 0, 1, 3$,  there are framings $f_k$ of $S^{2k{+}1}$
such that $K(S^{2k{+}1} \times S^{2k{+}1}, f_k \times f_k) = 1$. 
On the other hand in dimensions $30, 62$ and possibly $126$, then
$K(\NN, f)$ is independent of $f$ \cite[\S 8]{kervaire-milnor1}.

Given an orientation-preserving diffeomorphism $g \colon \NN_0 \cong \NN_1$
and a framing $f$ of $N_1$, we obtain the induced framing $g^*(f)$ of $N_0$.  
Hence we may define the set,
\[
\MMK : = \{ (\NN, f)\vv K(\NN, f) = 1 \text{~and~}\chi(\NN) = 0 \},
\] 
of framed diffeomorphism classes of $2k$-connected closed smooth framed 
$(4k{+}2)$-manifolds with Kervaire invariant one and Euler characteristic zero. 
A result of Freedman \cite{freedman1976} and its proof), leads to the following
classification result for $\MMK$.

\begin{proposition}[{\cite[Theorem 1]{freedman1976}, 
\cite[Theorem 4]{kreck3}}] \label{prop:freedman}
For all $k > 0$, if $(\NN_0, f_0)$ and $(\NN_1, f_1)$ in $\MMK$ are framed cobordant,
then they are  framed diffeomorphic. 
\end{proposition}

\begin{proof}
In Freedman's notation, we take $(M, \xi)$ to be the trivial bundle over a point.
The proof \cite[Theorem 1]{freedman1976}, see also  \cite[Theorem 4]{kreck3},
shows that $(\NN_0, f_0)$  and $(\NN_1, f_1)$ are framed $h$-cobordant, and
hence framed diffeomorphic.
\end{proof}

It follows that the elements of $\MMK$ are in bijection with their framed cobordism classes 
in $\theta_{j+1}$ (see \cite[Theorem 6.6 and \S 8]{kervaire-milnor1} for surjectivity). 
The surface case ($k=0$) is left to the reader.

\begin{remark} \label{rem:Theta_and_KM}
Let $\Theta_{4k{+}2}$ denote the group of oriented $h$-cobordism classes of homotopy 
$(4k{+}2)$-spheres as defined in \cite{kervaire-milnor1}.    
By \cite[Lemma 4.5 and Lemma 8.4]{kervaire-milnor1} there is a short exact sequence
\eqncount
\begin{equation*} 
  0 \xra{} \Theta_{4k{+}2} \longrightarrow \Omega_{4k{+}2}^{\rm fr} \xra{~K~} \bbZ/2 \to 0
\end{equation*}
and hence $\Theta_{4k{+}2}$ acts freely and transitively on $K^{-1}(1) = \theta_{j+1}$. 
Since $\pi_{4k{+}2}(SO) = 0$, we may regard $\Theta_{4k{+}2}$ as the
group of framed diffeomorphism classes of framed homotopy spheres.
By the remarks above, we see that
$\Theta_{4k{+}2}$ also acts freely and transitively on the set $\MMK$
 via connected sum of framed manifolds. 
\end{remark} 

\begin{proof}[The proof of Theorem \ref{thm:Z2-action}]
If $\theta_j$ is non-empty, then by 
the first sentence of Theorem \ref{thm:Brown}
there exists a smooth, closed, framed manifold $(\NN ,f)$, with Arf invariant one 
(and dimension $m=4k{+}2=2^{j+2}-2$), such that $\NN $ 
admits a smooth fixed-point free involution $t$ which preserves the framing $f$. 
By the second sentence of Theorem \ref{thm:Brown}, which is proven using
 equivariant framed surgery below the middle dimension, we may assume that $\pi_i(\NN ) = 0$ for $i <2k{+}1$.

The remaining part is contained in the second author's Ph.D thesis \cite{h0}.
Since $\NN $ is highly-connected, it follows that $H_{2k{+}1}(\NN ;\bbZ)$ is the direct sum (as a $\Lambda:=\bbZ[\bbZ/2]$-module) of a free $\Lambda$-module and two copies of the trivial $\Lambda$-module $\Za$.

 By \cite{h0} or \cite[Theorem 31]{hr1},  the $\bbZ[\bbZ/2]$-free summand splits off the $\bbZ/2$-equivariant intersection form of $\NN $, and supports a non-singular quadratic form 
 $$q\colon H_{2k{+}1}(\NN;\bbZ) \to Q_{-}(\bbZ/2^+) = \Lambda/\{\nu - \bar{\nu}\vv \nu \in \Lambda\} \cong \cy{2}\oplus \cy{2}$$
 refining the equivariant intersection form. The quadratic refinement $q$ is given by the framing at the identity element of  $\cy{2} = \{1, \tau\}$, and by the Browder-Livesay cohomology operation \cite[\S 4]{browder-livesay1} at the non-trivial element $\tau$.
 
 Hence we have an element of $L_{2l}(\bbZ[\bbZ/2],+)$, as discussed in \cite[\S 5]{wall-book}.  By \cite[\S 13A]{wall-book}, there are isomorphisms via the inclusion or projection map 
 $$L_{4k{+}2}(\bbZ[\bbZ/2],+) \cong L_{2}(\bbZ) \cong \bbZ/2.$$
 The Arf invariant of $(\NN ,f)$ is the sum of the Arf invariant of the form on the $\Lambda$-free part, and the Arf invariant of the hyperbolic form on 
 $
 \bbZ_{+}\oplus \bbZ_{+}$. We may choose the splitting of the equivariant intersection form so that the Arf invariant on the free part is zero. Then by equivariant framed surgery, the $\Lambda$-free summand can be removed. The new smooth framed manifold
  $(\NN ',f')$ has a smooth, free, frame-preserving involution, 
and $(\NN', f')$
is framed diffeomorphic to $(\NN, f)$ by Proposition \ref{prop:freedman}. 
\end{proof}

We now consider part (ii) of Theorem A.

\begin{theorem}\label{thm:dim30}
For any smooth, closed manifold $\NN \cong_{PL} \MK^{4k{+}2}$, of dimension $\leq 30$,  and any  framing $(\NN , f)$ with Arf invariant one,   $(\NN ,f)$ admits  a smooth, free,  frame-preserving $\bZ/2\times \bZ/2$ action.
\end{theorem}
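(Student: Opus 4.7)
The plan is to mirror the proof of Theorem~\ref{thm:Z2-action}, upgrading from $\bZ/2$ to $\bZ/2 \x \bZ/2$. The relevant dimensions are $4k+2 \in \{2, 6, 14, 30\}$, i.e.\ $k \in \{0, 1, 3, 7\}$, since these are the dimensions $\leq 30$ in which $\theta_{j+1}$ is non-empty.

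The first step is to produce, for each class $x \in \theta_{j+1}$, a smooth, closed, framed $(4k+2)$-manifold $(N_x, f_x)$ representing $x$ and admitting a smooth, free, frame-preserving $\bZ/2 \x \bZ/2$ action, to play the role that Brown's theorem played in the proof of Theorem~\ref{thm:Z2-action}. For $k \in \{0, 1, 3\}$ the sphere $S^{2k+1}$ is parallelizable with a Lie group ($k = 0, 1$) or Cayley ($k = 3$) H-space framing $f_k$ preserved by the antipodal map, so the product $(S^{2k+1} \x S^{2k+1}, f_k \x f_k)$ represents an Arf invariant one class and carries the free antipode--antipode action of $\bZ/2 \x \bZ/2$. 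For $k = 7$, where $S^{15}$ is not an H-space (Adams), I would instead adapt the transfer argument underlying Brown's theorem to a $(\bZ/2)^2$-transfer, producing a framed 30-manifold representing a chosen class in $\pi^S_{30}$ together with a free $\bZ/2 \x \bZ/2$ action whose three involution subquotients are framed null-cobordant.

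Given such an initial equivariant representative, I would then apply equivariant framed surgery below the middle dimension to make $N_x$ a $2k$-connected $(\bZ/2 \x \bZ/2)$-manifold. Setting $\La := \bZ[\bZ/2 \x \bZ/2]$, the middle-dimensional homology $H_{2k+1}(N_x; \bZ)$ splits as a $\La$-module into a $\La$-free summand and a complementary piece with trivial action, and the equivariant intersection form carries a non-singular quadratic refinement, built from the framing together with equivariant analogues of the Browder--Livesay operations of \cite{browder-livesay1}, whose Arf invariant is additive over this decomposition. After choosing the splitting so that the Arf invariant on the $\La$-free summand vanishes, equivariant framed surgery in the middle dimension removes that summand and produces $(N'_x, f'_x)$ PL-homeomorphic to $\MK^{4k+2}$, carrying the desired free frame-preserving $\bZ/2 \x \bZ/2$ action, and still representing $x \in \theta_{j+1}$. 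By the bijection $\MMK \leftrightarrow \theta_{j+1}$ used in the proof of Theorem~\ref{thm:Z2-action}, every $(N,f) \in \MMK$ is then framed diffeomorphic to some such $(N'_x, f'_x)$ and hence admits such an action.

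The main obstacle I expect is the $k = 7$ case of the first step: producing an initial framed $\bZ/2 \x \bZ/2$-equivariant representative in dimension 30 in the absence of an H-space structure on $S^{15}$. This is the reason the theorem is restricted to dimensions $\leq 30$, in contrast to Theorem~\ref{thm:Z2-action}. The middle-dimensional surgery is also somewhat more delicate than in the $\bZ/2$ case, since the $L$-theory of $\La$ is richer than that of $\bZ[\bZ/2]$, but it should still be accessible by standard techniques.
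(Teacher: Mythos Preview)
Your overall strategy matches the paper's: start from a framed representative carrying a free $\bZ/2 \times \bZ/2$ action, do equivariant framed surgery below the middle dimension, analyse the middle homology as a $\La = \bZ[\bZ/2 \times \bZ/2]$-module, and surger away the free part. Two points, however, are treated too lightly and are precisely where the real work lies.

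First, the $k=7$ input. Your phrase ``adapt the transfer argument \dots\ to a $(\bZ/2)^2$-transfer'' is on the right track, but this is not something you can just manufacture: what is needed is the theorem of Lin and Mahowald that every element of $\theta_4$ lies in the image of the double transfer $\pi^S_{30}(\PP^\infty \wedge \PP^\infty) \to \pi^S_{30}$. This is a nontrivial homotopy-theoretic fact (and, as the paper notes, Minami has shown the analogous statement \emph{fails} for $\theta_5$, which is why the theorem stops at dimension $30$). You should cite it rather than suggest it can be improvised.

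Second, and more seriously, your assertion that $H_{2k+1}(N_x;\bZ)$ splits as a free $\La$-summand plus ``a complementary piece with trivial action'' is exactly the statement that requires proof. For $\bZ/2$ this follows from periodicity, but $\bZ/2 \times \bZ/2$ does not have periodic cohomology, so a priori the non-free part $L_0$ is only an extension of a syzygy $\Omega^{16}\bZ$ by its dual, and such modules can be complicated. The paper identifies $L_0$ by computing the extension class as $c_*[M] \in H_{30}(G;\bZ)$, using the cohomological constraints $u^{15} \neq 0$, $u^{16} = 0$ (for every $u \in H^1(M;\cy 2)$) to pin $c_*[M]$ down to the K\"unneth summand $H_{15}(\cy 2) \otimes H_{15}(\cy 2)$, and then comparing with $\PP^{15} \times \PP^{15}$ to conclude $L_0 \cong \Za \oplus \Za$. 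Only then does one know the intersection form restricted to $L_0$ is unimodular and that the orthogonal complement is free. Without this argument your middle-dimensional surgery step has no foundation. The final surgery step then uses Wall's computation $L_{4k+2}(\bZ[\bZ/2 \times \bZ/2]) \cong \bZ/2$, detected by the ordinary Arf invariant; your ``equivariant Browder--Livesay'' suggestion is not needed and would be harder to set up.
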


\begin{proof} 
The idea is similar to the above: we use the fact that  the elements in $\theta_j$ are in the image of  the ``double transfer"
$$tr\colon \pi^S_{4k{+}2}(\PP^\infty \wedge \PP^\infty) \to \pi_{4k{+}2}^S$$
 for $j \leq 4$ (see
 Lin and Mahowald \cite{lin-mahowald1} for $\theta_4$). The double transfer is defined geometrically by taking the universal covering of a framed manifold
$(M,f)$ with a $2$-connected reference map $f\colon M \to \PP^\infty \times \PP^\infty)$. 
The argument is the same for each of the $\theta_j$, $j \leq 4$, but for $\dim \MK^{4k{+}2} < 30$ the Kervaire manifolds are products of spheres, with product framings, so a direct construction can be given.    Let $(M,f)$ be a smooth, closed, framed $30$-dimensional manifold, with 
 $$G:=\pi_1(M) = \bZ/2\times \bZ/2,$$
  such that its universal covering $(\wM, \tilde f)$ has Kervaire invariant one. By framed surgery below the middle dimension, we may assume that $\wM$ is $2k$-connected and has a free $G$-action preserving the framing.
\begin{enumerate}
\item The $\La$-module $H_{15}(\wM)$ is stably isomorphic to $L_0 \oplus L_1$, where $L_1$ is a free $\La$-module, and $L_0$ is an extension of $\Omega^{16}\bZ$ and its dual. We remark that the argument in  \cite[Prop.~2.4]{hk2} generalizes to $M$ since its universal covering is $2k$-connected.
\item The extension class for $L_0$ is the image $c_*[M] \in H_{30}(G; \bZ)$.
\item $c_*[M] \neq 0$ since $\Omega^{16}\bZ$ has $\bZ$-rank $> 1$ because $\bZ/2\times \bZ/2$ does not have periodic cohomology. 
\item For every class $u \in H^1(M;\cy 2)$, we have
 $u^{16} = 0$, but $u^{15}\neq 0$.
\item It follows that $0 \neq c_*[M] \in H_{15}(\cy 2) \otimes H_{15}(\cy 2) 
\subset H_{30}(G;\bZ)$. 
\item The fundamental class of $\PP^{15}\times \PP^{15}$  has the same image in $H_{30}(G;\bZ)$, hence $L_0 \cong \Za\oplus \Za$. 
\item The intersection form $\lambda_M$ is unimodular restricted to $L_0$, so it admits an orthogonal splitting $L_0 \perp L_1$. 
\end{enumerate}
 We can now do equvariant framed surgery to eliminate the free summand $L_1$, since the surgery obstruction group $L_{4k{+}2}(\bZ G) \cong \bZ/2$ is again detected by the ordinary Arf invariant (see \cite[Theorem 3.2.2]{wall-VI}). The resulting framed manifold $(N,f)$ has a smooth, free $G$-action preserving the framing, and 
 $N \cong_{PL} \MK^{4k+2}$ by Proposition \ref{prop:freedman} (PL-version).

\end{proof}
\begin{remark}
 Minami \cite{minami1} has proved that no order two element $x_5 \in \theta_5$ lies in the image of the double transfer, so this method of constructing $\bZ/2\times \bZ/2$ actions does not continue. 
 \end{remark}
 \begin{remark} \label{lem:MKK_count}
Computations in homotopy theory provide the list:
$|\MMKK{2}| = |\MMKK{6}| = 1$, $|\MMKK{14}| = 2$, $|\MMKK{30}| = 3$ and $|\MMKK{62}| = 24$.
The values of $|\MMKK{4k{+}2}|$ for $4k{+}2  = 2, 6, 14$ and $30$ can be found in \cite[Table A3.3]{ravenel-greenbook}.
To determine $|\MMKK{62}|$ we use \cite{kochman-mahowald}.

For an $(\NN, f) \in \MMK$, the group $ H^{2k{+}1}(\NN; \pi_{2k{+}1}(SO)) \cong \bbZ^2$  acts freely 
and transitively on  the homotopy classes of framings of $N$ compatible with the orientation.  Hence there exist a large number of self-diffeomorphisms $g \colon \NN \cong \NN$ which act on the set of framings of $N$ (see 
\cite[Theorem 2]{kreck76} and \cite[Proposition 3.1]{crowley2011}).
\end{remark}


\section{Free involutions on highly-connected manifolds}\label{sec:highlyconnected}
Let $M^{2l}$ be a closed, oriented smooth or PL-manifold of dimension $2l\geq 4$, with fundamental group $\pi_1(M) =\cy 2$. In addition, we assume that $\pi_i(M) = 0$, for $1< i <l$, and consider the classification problem for such manifolds
This is equivalent to the study of free, orientation-preserving involutions on $(l{-}1)$-connected, $2l$-manifolds, by passing to the universal covering $\wM$ of $M$. We refer to \cite{h0, h1} and \cite{wells1,wells2} for earlier results on this problem, assuming $l \geq 3$, generalizing  the classification of $(l{-}1)$-connected $2l$-manifolds given by  Wall \cite{wall1962}.  Closed, oriented $4$-manifolds with fundamental group $\cy 2$ were classified by Hambleton and Kreck \cite{hk6}.

Let $\La =\ZZa$ denote the integral group ring, let $\bZ/2 = \la T\ra$, and let $\Za$ (respectively $\Zb$) denote the integers with $T$ acting as $+1$ (respectively $-1$). We will also write $\Ze$, with $\varepsilon = \pm 1$, for short.

\begin{lemma} Let $M^{2l}$ be a closed, oriented  PL-manifold of dimension $2l\geq 4$, with $\pi_1(M) =\cy 2$. If $\pi_i(M) = 0$, for $1< i <l$, then $\pi_l(M) \cong r\La\oplus \Ze \oplus \Ze$ for some $r\geq 0$,  with $\varepsilon = (-1)^{l+1}$. 
\end{lemma}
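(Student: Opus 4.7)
By the Hurewicz theorem, $\pi_l(M) \cong N := H_l(\widetilde{M};\bZ)$ as $\La$-modules. The strategy has three parts: show $N$ is $\bZ$-free, classify it via Heller--Reiner, and pin down the remaining summands by comparing the Cartan--Leray spectral sequence for the double cover with Poincar\'e duality on $M$.

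First, since $\widetilde{M}$ is a closed oriented $2l$-manifold (the involution preserves orientation), Poincar\'e duality gives $N \cong H^l(\widetilde{M};\bZ)$; the universal coefficient theorem together with $H_{l-1}(\widetilde{M}) = 0$ identifies the right-hand side with $\mathrm{Hom}(N,\bZ)$. Hence $N$ is $\bZ$-torsion free, and the Heller--Reiner classification of finitely generated $\bZ$-free modules over $\La = \bZ[\bZ/2]$ produces a decomposition
$$N \cong r\La \oplus a\,\Za \oplus b\,\Zb$$
for some non-negative integers $r,a,b$. The lemma amounts to the assertion $(a,b) = (2,0)$ when $l$ is odd and $(a,b) = (0,2)$ when $l$ is even.

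To pin down $(a,b)$ I would compute $H_*(M;\bZ)$ using the Cartan--Leray spectral sequence
$$E^2_{p,q} = H_p(\bZ/2;H_q(\widetilde{M};\bZ)) \Longrightarrow H_{p+q}(M;\bZ),$$
whose $E_2$-page is concentrated in the three rows $q \in \{0,l,2l\}$, with $E^2_{*,2l} \cong E^2_{*,0}$ by orientability of $M$. The standard values $H_*(\bZ/2;\La) = 0$ for $*>0$, $H_{2k+1}(\bZ/2;\Za) \cong \bZ/2$, and $H_{2k}(\bZ/2;\Zb) \cong \bZ/2$ for $k \geq 0$ express every $E^2_{p,l}$ in terms of $r,a,b$, and the only potentially non-zero differentials form a single family of $d_{l+1}$-maps. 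In particular $H_i(M;\bZ) \cong H_i(\bZ/2;\bZ)$ for $i < l$.

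Poincar\'e duality on the oriented manifold $M$ together with the universal coefficient theorem then computes $H_{l+1}(M;\bZ)$ to be $\bZ/2$ when $l$ is odd and $0$ when $l$ is even, and shows that the torsion subgroup of $H_l(M;\bZ)$ is $0$ when $l$ is odd and $\bZ/2$ when $l$ is even. Matching these values against the $E^\infty$-contributions $E^\infty_{0,l}$, $E^\infty_{1,l}$ and $E^\infty_{l+1,0}$, together with the parity constraint $a+b \equiv 0 \pmod 2$ forced by $\chi(\widetilde{M}) = 2\chi(M)$, rules out every possibility except the two claimed ones, so $N \cong r\La \oplus \Ze \oplus \Ze$ with $\varepsilon = (-1)^{l+1}$.

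The main work is the bookkeeping of the $d_{l+1}$-differentials. For instance when $l$ is even one must show that the transgression $d_{l+1}\colon E^{l+1}_{l+1,0} = \bZ/2 \to E^{l+1}_{0,l}$ is injective (otherwise $E^\infty_{l+1,0}$ contributes non-trivially to the vanishing group $H_{l+1}(M;\bZ)$) and that its image lies in the $(\bZ/2)^b$-summand of $E^2_{0,l}$ (since the free summand $\bZ^{r+a}$ has no $2$-torsion); the surviving torsion $(\bZ/2)^{b-1}$ must then equal $\bZ/2$, forcing $b = 2$ and $a = 0$. A parallel analysis identifying $E^\infty_{1,l}$ as the only contribution to $H_{l+1}(M;\bZ)$ when $l$ is odd forces $a = 2$, while the torsion-freeness of $H_l(M;\bZ)$ forces $b = 0$.
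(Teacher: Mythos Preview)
Your argument is correct and follows the same route the paper indicates: the paper's entire proof reads ``This is an easy consequence of the spectral sequence for the universal covering $\widetilde M \to M \to K(\mathbb Z/2,1)$,'' which is exactly the Cartan--Leray spectral sequence you analyse. You have simply supplied the details the paper omits. One small remark: the Euler characteristic relation $\chi(\widetilde M)=2\chi(M)$ actually gives the exact identity $b-a=2(-1)^l$, not merely a parity constraint; using this directly would shorten the endgame, since combined with $a=0$ (for $l$ even) or $b=0$ (for $l$ odd) it immediately yields the remaining value.
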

\begin{proof}
This is an easy consequence of the spectral sequence for the univeral covering $\wM \to M \to K(\cy 2,1)$.
\end{proof}
Next we recall the equivariant intersection form $\lambda_M\colon \pi_l(M) \times \pi_l(M) \to \bZ$, defined by counting interesections and self-intersections equivariantly in $\wM$ (see \cite[Chap.~5]{wall-book}). Then $\lambda_M$ is a unimodular $(-1)^l$-symmetic bilinear form, satisfying the properties (i) $\lambda_M(x, y) = \lambda_M(Tx, Ty)$, for all $x, y \in \pi_l(M)$, and (ii) $\lambda_M(x, Tx) \equiv 0 \pmod 2$, for all $x \in \pi_l(M)$.

In the rest of this section, we will consider only the special case $l = 2k{+}1$ relevant to the existence of free orientation-preserving smooth or PL-involutions on Kervaire manifolds. More precisely:

\begin{definition}\label{def:conditions}
Let $M^{4k{+}2}$ be a closed, oriented smooth or PL-manifold satisfying the following conditions:
\begin{enumerate}
\item $\pi_1(M) = \cy 2$,
\item $\pi_i(M) = 0$, for $1 < i <2k{+}1$, and
\item $H_{2k{+}1}(\wM;\bZ) \cong \pi_{2k{+}1}(M) \cong \Za \oplus \Za$.
\end{enumerate} 
\end{definition}  
We will give a geometric decomposition $M = \WW  \cup_\phi \WW $, based on the normal bundle $\xi$ of a \emph{characteristic} embedding $f\colon \PP^{2k{+}1} \to M$ (see Definition \ref{def:charembedding} and Theorem \ref{thm:decomp}).

For convenience, we will work now in the smooth category, but with obvious changes the discussion applies to the PL-category. Let $\cB = \{e_0, e_\infty\}$ denote a fixed symplectic base for $H_{2k{+}1}(\wM;\bZ)$, so that $\lambda_M(e_0,e_0) = \lambda_M(e_\infty, e_\infty) = 0$, and $\lambda_M(e_0, e_\infty) = 1$.
We first discuss the existence and uniqueness of embeddings $\PP^{2k{+}1} \subset M$.
\begin{definition} An embedding $f\colon \PP^{2k{+}1} \to M$ \emph{represents} 
$e_0 \in \ H_{2k{+}1}(\wM;\bZ)$ if, 
\begin{enumerate}
\item $f_\#\colon \pi_1(\PP^{2k{+}1}) \to \pi_1(M)$ is an isomorphism,
\item $\tilde f_*([S^{2k{+}1}]) = e_0$ for some covering $\tilde f\colon S^{2k{+}1} \to \widetilde M$ of $f$.
\end{enumerate}
\end{definition}

\begin{proposition}  If $k\geq 1$, there is an embedding $f\colon \PP^{2k{+}1} \to M$ representing $e_0$, which is unique up to homotopy. If $k \geq 2$, the embedding is unique up to isotopy.
\end{proposition}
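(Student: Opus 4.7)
The proof splits into three parts: existence of a continuous map representing $e_0$, upgrading that map to an embedding, and the two uniqueness statements.

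For existence of a map $f\colon \PP^{2k+1} \to M$ representing $e_0$, I would use standard obstruction theory. Since $\pi_1(M) \cong \cy 2 \cong \pi_1(\PP^{2k+1})$ and $\pi_i(M) = 0$ for $1 < i < 2k+1$, any map of $1$-skeleta inducing an isomorphism on $\pi_1$ extends through the $2k$-skeleton without obstruction. Extensions over the top $(2k+1)$-cell are parametrised by $\pi_{2k+1}(M) = \Za \oplus \Za$ (with trivial $\cy 2$-action on each $\Za$ summand), so I may select the extension so that $\tilde f_*[S^{2k+1}] = e_0$.

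To upgrade $f$ to an embedding, I would lift to $\tilde f\colon S^{2k+1} \to \wM$ representing $e_0$. Since $\wM$ is simply-connected and $2k$-connected of dimension $2(2k+1)$, and $\lambda_{\wM}(e_0, e_0) = 0$, Haefliger--Whitney embedding results (applicable for $k \geq 1$) imply that $\tilde f$ is regularly homotopic to an embedded sphere. Because $T_* e_0 = e_0$, the sphere $T(\tilde f(S^{2k+1}))$ is ambient isotopic to $\tilde f(S^{2k+1})$ in $\wM$, and an equivariant straightening in codimension $2k+1 \geq 3$ yields a $T$-invariant embedded sphere, which descends to an embedding $f\colon \PP^{2k+1} \hookrightarrow M$ representing $e_0$.

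For homotopy uniqueness (requiring only $k \geq 1$), given two maps $f_0, f_1$ both representing $e_0$, I would rerun the obstruction analysis on the cylinder $\PP^{2k+1} \times I$ with boundary data $f_0 \sqcup f_1$: the lower-dimensional obstructions vanish as before, and the top-dimensional obstruction equals $\tilde{f_0}_*[S^{2k+1}] - \tilde{f_1}_*[S^{2k+1}] = 0$ in $\pi_{2k+1}(M)$, giving a homotopy. For isotopy uniqueness when $k \geq 2$, I would convert the homotopy between embeddings into a concordance $\PP^{2k+1} \times I \hookrightarrow M \times I$ via Haefliger's metastable embedding theorem (applicable when codimension $2k+1 \geq 5$), and then invoke Hudson's theorem that concordance implies isotopy in codimension $\geq 3$.

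The main obstacle is the middle-dimensional upgrade from map to embedding, and in particular the equivariant straightening step: ordinary simply-connected embedding theory supplies an embedded sphere in $\wM$, but achieving genuine $T$-invariance of this sphere (as opposed to a mere ambient isotopy between $T(\tilde f(S^{2k+1}))$ and $\tilde f(S^{2k+1})$) requires careful handling in the critical dimension $\dim M = 2 \dim \PP^{2k+1}$. The hypothesis that $e_0$ lies in the $T$-fixed $\Za \oplus \Za$ summand of $\pi_{2k+1}(M)$, together with the codimension bound $2k+1 \geq 3$, is exactly what allows the equivariant construction to go through.
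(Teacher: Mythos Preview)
Your homotopy-classification steps (existence and uniqueness of the map $f$) are fine and agree with the paper, which simply cites Olum for the same obstruction-theoretic fact. Your isotopy-uniqueness argument via concordance plus Hudson is correct, though the paper just invokes Haefliger's metastable theorem directly: for $N^n\to M^m$ with $2m>3(n+1)$, homotopic embeddings are isotopic, and here $n=2k+1$, $m=4k+2$ gives exactly $k\geq 2$.

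The real problem is your existence-of-embedding step. Passing to $\wM$, embedding the sphere there, and then appealing to an ``equivariant straightening'' is a genuine gap. Knowing that $T(\tilde f(S^{2k+1}))$ is ambient-isotopic to $\tilde f(S^{2k+1})$ does \emph{not} produce a $T$-invariant embedded sphere; there is no averaging procedure for submanifolds, and the isotopy you obtain need not be compatible with the involution in any way that lets you close up. Making this precise would require genuine equivariant (isovariant) embedding theory in the critical middle dimension, which is considerably more delicate than the codimension bound $2k+1\geq 3$ alone provides. You flag this step as ``the main obstacle,'' and you are right --- but it is not merely a matter of careful handling; as stated it is not a proof.

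The fix is to bypass the equivariant issue entirely and apply Haefliger's embedding theorem \emph{directly downstairs} to the map $f\colon \PP^{2k+1}\to M$ that you already constructed. Haefliger's theorem says a map $N^n\to M^m$ is homotopic to an embedding whenever $2m\geq 3(n+1)$; with $n=2k+1$ and $m=4k+2$ this is $8k+4\geq 6k+6$, i.e.\ $k\geq 1$, matching the hypothesis exactly. This is what the paper intends (it cites Haefliger for isotopy and leaves existence implicit, since the same reference covers it). No passage to the universal cover is needed.
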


\begin{proof} Existence is proved in Wells \cite[Lemma 3]{wells1}. For uniqueness up to homotopy, we apply Olum \cite[Corollary 16.2]{olum1}, and uniqueness up to isotopy follows from Haefliger \cite{haefliger1}.
\end{proof}
\begin{corollary} If $k\geq 1$, the normal bundles in $M$ of any two embeddings of $\PP^{2k{+}1}$ representing $e_0$ are  isomorphic.
\end{corollary}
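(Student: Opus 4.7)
The plan is to split into two cases according to the strength of the uniqueness statement in the Proposition. For $k\geq 2$, the Proposition provides uniqueness of the embedding $f\colon \PP^{2k+1}\hookrightarrow M$ representing $e_0$ up to isotopy. Applying the isotopy extension theorem to an isotopy between $f_0$ and $f_1$, I obtain an ambient isotopy of $M$ carrying $f_0$ to $f_1$, and hence carrying a tubular (respectively regular) neighborhood of $f_0(\PP^{2k+1})$ onto one of $f_1(\PP^{2k+1})$. This immediately yields an isomorphism $\nu_{f_0}\cong \nu_{f_1}$, both in the smooth and PL categories.

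For $k=1$, only homotopy uniqueness is available from the Proposition, so an extra argument is required. The key observation is that $\PP^3\cong SO(3)$ is a parallelizable Lie group, so $T\PP^3\cong \varepsilon^3$ is trivial. From the tangent bundle identity $f^*(TM)\cong \nu_f\oplus T\PP^3$, the total Stiefel--Whitney class satisfies $w(\nu_f)=f^*w(TM)$; in particular $w_2(\nu_f)=f^*w_2(TM)$ depends only on the homotopy class of $f$. Moreover, since $M$ is oriented and $\PP^3$ is orientable, $\nu_f$ is an oriented rank-$3$ bundle. I would then invoke the classification of such bundles over $\PP^3$: because $\pi_2(BSO(3))=\bZ/2$ and $\pi_3(BSO(3))=0$, the Postnikov $3$-section of $BSO(3)$ is $K(\bZ/2,2)$, so $[\PP^3,BSO(3)]\cong H^2(\PP^3;\bZ/2)\cong \bZ/2$ with the bijection given by $w_2$. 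Combined with the previous observation this forces $\nu_{f_0}\cong \nu_{f_1}$.

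The main obstacle I expect is carrying out the $k=1$ case in the PL category, where one needs the analogous classification $[\PP^3, BSPL(3)]\cong H^2(\PP^3;\bZ/2)$ via $w_2$. This should follow from the high connectivity of $PL(3)/O(3)$ (so that $BSO(3)\to BSPL(3)$ induces a bijection on $[\PP^3,-]$), but warrants explicit verification.
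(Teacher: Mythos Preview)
Your argument is correct and, for $k\geq 2$, identical to the paper's. For $k=1$ the paper takes a slightly different route: from $f^*(\tau_M)\cong g^*(\tau_M)$ it concludes that $\nu_f$ and $\nu_g$ are \emph{stably} isomorphic (citing Fujii's computation of $KO(\PP^3)$), and then invokes the Dold--Whitney classification of oriented sphere bundles over a $4$-complex to deduce that stable isomorphism implies isomorphism for $3$-plane bundles over $\PP^3$. Your approach bypasses both references by running the obstruction theory directly: since $\pi_2(BSO(3))\cong\bZ/2$ and $\pi_3(BSO(3))=\pi_2(SO(3))=0$, the map $w_2\colon [\PP^3,BSO(3)]\to H^2(\PP^3;\bZ/2)$ is a bijection, and $w_2(\nu_f)=f^*w_2(M)$ is a homotopy invariant. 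This is essentially the relevant special case of Dold--Whitney proved from scratch, so the two arguments are close cousins; yours is more self-contained, the paper's is shorter given the citations. Your caveat about the PL case is appropriate; the paper simply asserts at the outset of the section that the discussion carries over to the PL category ``with obvious changes'' and does not address it separately.
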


\begin{proof} For $k\geq 2$, the embeddings are isotopic so their normal bundles are isomorphic. If $k =1$, we have 
$f^*(\tau_M) \cong g^*(\tau_M)$, for any two homotopic embeddings. Therefore the normal bundles of $f$ and $g$ are stably isomorphic (see Fujii \cite[Theorem 2]{fujii1}). For $3$-plane bundles over $\PP^3$, stable isomorphism implies isomorphism (by Dold-Whitney \cite{dold-whitney1}).
\end{proof}

\begin{definition}\label{def:charembedding}
 A \emph{characteristic embedding} of $\PP^{2k{+}1}$ in $M$ is an embedding which represents $e_0 \in \cB \subset \pi_{2k{+}1}(M)$, where $\cB$ is a symplectic base for $\lambda_M$. The normal bundle to a characteristic embedding will be denoted $\xi = \xi(M)$, and called the \emph{characteristic bundle}. 
\end{definition}

The following lemma implies that every characteristic bundle has a section.

\begin{lemma} \label{lem:charsection}

Every orientable rank $2k{+}1$ vector bundle $\zeta$ over $\PP^{2k{+}1}$ admits a non-zero section.
\end{lemma}

\begin{proof}
Elementary obstruction theory shows that the Euler class of $\zeta$, $e(\zeta)$,
is the sole obstruction to the existence of a non-zero section. 
But $e(\zeta) \in H^{2k+1}(\PP^{2k{+}1}; \bZ) \cong \bZ$ has order two by \cite[Property 9.4]{milnor-stasheff1},
and so vanishes.  
\end{proof}

For the rest of this section, we fix a characteristic embedding $f\colon \PP^{2k{+}1} \to M$, and let $\WW  \subset M$ denote a small closed tubular 
neighbourhood of $f(\PP^{2k{+}1} )$ in $M$, with boundary $\VV  = \bd \WW $. 
Then $\WW $ is  diffeomorphic to $D(\xi)$, the total space of the $(2k{+}1)$-disk bundle associated to  the characteristic bundle, and $\VV $ is diffeomorphic to $S(\xi)$. 
Let $E = M - \inn \WW $ denote the complement of $\WW  \subset M$.

\begin{lemma} $E$ is diffeomorphic (PL-homeomorphic) to $\WW  \cong D(\xi)$.
\end{lemma}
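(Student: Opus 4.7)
The plan is to produce an embedding $s' \colon \PP^{2k+1} \hookrightarrow \inn E$ with normal bundle isomorphic to $\xi$, and to show that $E$ is a closed tubular neighborhood of $s'(\PP^{2k+1})$, so that $E \cong D(\xi) \cong W$.

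First I construct $s'$. The symplectic condition $\lambda_M(e_0, e_0) = 0$ forces the Euler class $e(\xi) = 0$, since $e(\xi)$ computes the self-intersection of the zero-section of $\xi$. Hence $\xi$ admits a nowhere-zero section, which normalizes to $s \colon \PP^{2k+1} \to V = S(\xi)$. Pushing $s$ slightly into $\inn E$ through the boundary collar of $V \subset E$ yields an embedding $s' \colon \PP^{2k+1} \hookrightarrow \inn E$. Using the splitting $\xi = \an{s} \oplus \eta$ determined by the section (where $\eta$ is a complementary rank-$2k$ bundle) together with the triviality of the normal of $V$ in its collar, the normal bundle of $s'$ in $E$ is $\eta \oplus \an{s} \cong \xi$. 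Hence a small closed tubular neighborhood $N \subset \inn E$ of $s'(\PP^{2k+1})$ is diffeomorphic (PL-homeomorphic) to $D(\xi)$.

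Second, I show $E \cong N$. A Mayer--Vietoris and Van Kampen computation applied to $M = W \cup_V E$, using the hypotheses of Definition~\ref{def:conditions}, the known homology of $W \simeq \PP^{2k+1}$, and the Gysin (or Serre) spectral sequence for the fibration $V = S(\xi) \to \PP^{2k+1}$, yields $\pi_1(E) \cong \bZ/2$ and $H_*(E) \cong H_*(\PP^{2k+1})$. Combined with the fact that $s'$ factors as $\PP^{2k+1} \xrightarrow{s} V \hookrightarrow E$ with the first map inducing a $\pi_1$-isomorphism, one checks that the inclusion $s' \colon \PP^{2k+1} \hookrightarrow E$ is a homotopy equivalence. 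Consequently $E \setminus \inn N$ is a compact $h$-cobordism between $\partial N = S(\xi)$ and $\partial E = V = S(\xi)$. Since $\mathrm{Wh}(\bZ/2) = 0$ and $\dim(E \setminus \inn N) = 4k+2 \geq 6$, the $s$-cobordism theorem gives $E \setminus \inn N \cong V \times I$, and hence $E \cong N \cong D(\xi) \cong W$.

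The main obstacle will be verifying cleanly that $s' \colon \PP^{2k+1} \hookrightarrow E$ is a homotopy equivalence. The $\pi_1$-isomorphism is automatic, but computing $H_*(E)$ requires combining the Mayer--Vietoris sequence for the decomposition with the Gysin data for $S(\xi) \to \PP^{2k+1}$ and the homological constraints of Definition~\ref{def:conditions}; the specific numerology here is what makes the hypothesis $H_{2k+1}(\wM) \cong \Za \oplus \Za$ enter in an essential way.
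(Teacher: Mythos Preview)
Your proposal is correct and follows essentially the same strategy as the paper: produce a second copy of $\PP^{2k+1}$ inside $E$ with normal bundle $\xi$, take its tubular neighbourhood, and show that the remaining region is an $h$-cobordism. The only real difference is in how the embedding and its normal bundle are obtained. The paper isotopes the characteristic embedding $f$ off itself using general position (invoking $\lambda_M(e_0,e_0)=0$ and the existence of a nonzero section of $\xi$), then identifies the normal bundle via the earlier corollary that any two embeddings representing $e_0$ have isomorphic normal bundles. You instead use the nonzero section $s$ of $\xi$ directly to land in $V=S(\xi)$ and push into $E$ through a collar, then compute the normal bundle by hand from the splitting $\xi\cong\langle s\rangle\oplus\eta$. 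Both routes produce the same embedding up to isotopy; yours is slightly more self-contained since it avoids appealing to the uniqueness result. The paper then simply asserts that the complementary region is an $h$-cobordism (``it is easy to check''), whereas you sketch the homological verification via Mayer--Vietoris and Lefschetz duality; this is the same content, just made explicit.
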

\begin{proof}
By general position, we may isotope the embedding $f$ to obtain an embedding $g\colon \PP^{2k{+}1} \to M - \inn \WW $. 
This is possible because because the normal bundle $\xi$ has a non-zero section by Lemma \ref{lem:charsection}.
Then $g$ is unique up to isotopy, and we let $U \subset E= M -\inn \WW $ denote a small closed tubular neighbourhood of $g(\PP^{2k{+}1})$ in  $E$. It is easy to check that the region $E - \inn U$ is an $h$-cobordism between $\bd U$ and $\bd E = S(\xi)$. But $U\cong D(\xi)$, so 
it follows that $E$ is diffeomorphic to the total space of the characteristic $(2k{+}1)$-disk bundle $D(\xi)$ over $\PP^{2k{+}1}$.
\end{proof}

We summarize:

\begin{theorem}\label{thm:decomp}
 Suppose that $M^{4k{+}2}$ is a closed, oriented smooth (PL) manifold satisfying the condtions
 \textup{(\ref{def:conditions})},and  let $\xi(M)$ denote the normal bundle of a  characteristic embedding of $\PP^{2k{+}1}$ in $M$. 
Then there is a diffeomorphism (PL-homeomorphism) $\phi\colon S(\xi) \to S(\xi)$, such that 
$M \cong D(\xi) \cup_\phi D(\xi)$.
\end{theorem}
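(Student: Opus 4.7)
The plan is that Theorem~\ref{thm:decomp} is essentially a formal consequence of the two preceding lemmas, and I would structure the argument in three short bookkeeping steps.

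First I would use the tubular neighborhood of the characteristic embedding $f\colon \PP^{2k+1}\hookrightarrow M$ to fix a diffeomorphism of pairs
\[
 t\colon (\WW,\VV) \xrightarrow{\cong} (D(\xi), S(\xi)),
\]
and then appeal to the immediately preceding lemma to fix a diffeomorphism $\psi\colon E \xrightarrow{\cong} D(\xi)$ of the complement $E = M - \inn\WW$ with $D(\xi)$. Its restriction gives a diffeomorphism $\partial\psi\colon \VV = \partial E \to S(\xi)$.

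Next I would set $\phi := \partial\psi \circ (t|_{\VV})^{-1}\colon S(\xi) \to S(\xi)$ and verify the required identification by unwinding the definitions. Since $M$ is obtained by gluing $\WW$ and $E$ along $\VV$ via the identity inclusions, transporting $\WW$ to $D(\xi)$ via $t$ and $E$ to $D(\xi)$ via $\psi$ turns this identity gluing into $\phi$: a point $y\in S(\xi)$ in the boundary of the first copy of $D(\xi)$ corresponds to $x = t^{-1}(y)\in \VV$, which is glued to the same point $x\in \partial E$, and this in turn corresponds in the second copy of $D(\xi)$ to $\partial\psi(x) = \phi(y)$. This gives $M \cong D(\xi)\cup_\phi D(\xi) = \WW\cup_\phi \WW$, as required.

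There is essentially no obstacle in this last step beyond the bookkeeping described above, since the real content of the theorem is already contained in the previous lemma: the identification $E \cong D(\xi)$ rests on an $h$-cobordism (or $s$-cobordism) argument and uses the key vanishing $\lambda_M(e_0,e_0) = 0$ together with the existence of a nowhere-zero section of $\xi$. The only small point worth flagging is orientations: in order for $\WW \cup_\phi \WW$ to inherit the preferred orientation of $M$, one typically wants $\phi$ to reverse orientation on $S(\xi)$, and this can be arranged (if needed) by precomposing $\psi$ with a fibrewise orientation-reversing bundle automorphism of $D(\xi)$.
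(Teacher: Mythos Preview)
Your proposal is correct and matches the paper's approach: in the paper, Theorem~\ref{thm:decomp} is presented explicitly as a summary (``We summarize:'') of the preceding lemma identifying $E \cong D(\xi)$, with no further proof given, so your bookkeeping steps are exactly the formal gluing that the paper leaves implicit. Your remark on orientations is a reasonable extra observation but is not needed for the statement as given.
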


This result will be our guide to constructing free involutions on the Kervaire manifolds.

\section{Twisted doubles and free involutions on Kervaire manifolds} 
\label{sec:FreeZ2actions}

We now consider the case when $M$ is a closed oriented PL-manifold, with  
fundamental group $\pi_1(M) = \cy 2$ and universal cover $\wM \cong_{PL} \MK^{4k{+}2}$ a Kervaire manifold. By Theorem A, this is only possible if $4k{+}2 = \dim \theta_{j+1} = 2\dim\theta_j +2$, for some $j\geq 0$. For convenience, we let $n= \dim \theta_j$ so that 
$\dim M = 2n{+}2$.
We recall a key feature of the plumbing description for the Kervaire manifolds. If $\nu$ denotes the normal bundle of an embedded 
$(2k{+}1)$-sphere in 
$\MK^{4k{+}2}$ which represents a primitive homology class, then 
$\nu \cong \tau_{S^{2k{+}1}}$ is isomorphic to the tangent bundle of the $(2k{+}1)$-sphere. Let $\pi\colon S^{2k{+}1} \to \PP^{2k{+}1}$ denote the 2-fold covering projection.

By Theorem \ref{thm:decomp}, to construct a suitable orbit manifold $M := \MK^{4k{+}2}/\la \tau\ra$, we need to find  the following:

\begin{enumerate}
\item A $(2k{+}1)$-dimensional (PL) bundle $\xi$ over $\PP^{2k{+}1}$, such that $\pi^*(\xi) \cong \tau_{S^{2k{+}1}}$. 
 \item 
A PL-homeomorphism $g\colon S(\xi) \to S(\xi)$, so that the manifold  $M_g: = \WW \cup_g \WW $, with $\WW  = D(\xi)$, will have 
 universal covering $\wM_g \cong \MK^{4k{+}2}$.
\end{enumerate}

Note that for $l \neq 1, 3, 7$, the tangent bundle $\tau_{S^l}$ is the unique non-trivial $l$-plane bundle over $S^l$ which is stably trivial. 

The first requirement can clearly be met by taking $\xi = \tau_{\PP^{2k{+}1}}$. In the Arf invariant dimensions, there is another possibility:
\begin{theorem}[Brown \cite{brown_edgar2}]  Let $\nu$ denote the normal bundle of a smooth immersion of $\PP^l$ in $\bbR^{2l}$. If $l \neq 1, 3, 7$ and $l$ is odd, then $\pi^*(\nu)$ is isomorphic to $\tau_{S^l}$ if and only if $l = 2^j-1$, for some $j >3$.
\end{theorem}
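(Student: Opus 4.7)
The plan is to reduce the problem to identifying $\pi^*(\nu)$ within a two-element set of isomorphism classes and then to apply a secondary invariant to decide which class arises.

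First I would exploit the splitting $\tau_{\PP^l}\oplus \nu\cong\epsilon^{2l}$ on $\PP^l$ supplied by the immersion, and pull it back along $\pi\colon S^l\to \PP^l$ to obtain $\tau_{S^l}\oplus\pi^*(\nu)\cong \epsilon^{2l}$. Thus $\pi^*(\nu)$ is a stably trivial rank-$l$ bundle on $S^l$. The fibration $SO(l)\to SO(l+1)\to S^l$ identifies the set of such bundles with the image of the connecting map $\partial\colon \pi_l(S^l)\cong\bZ\to\pi_{l-1}(SO(l))$, which sends $1$ to the clutching class $[\tau_{S^l}]$. For $l$ odd with $l\neq 1,3,7$ this class has order exactly two (by Adams' Hopf-invariant-one theorem, which asserts that $\tau_{S^l}$ is trivial precisely when $l\in\{1,3,7\}$, combined with the identity $\tau_{S^l}\oplus\epsilon\cong\epsilon^{l+1}$). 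Hence $\pi^*(\nu)\in\{\epsilon^l,\,\tau_{S^l}\}$ and the problem reduces to deciding which.

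The two candidates $\epsilon^l$ and $\tau_{S^l}$ have identical Euler class (both zero, since $l$ is odd), Stiefel--Whitney classes and Pontrjagin classes, so they can be separated only by a secondary invariant. I would compute $[\pi^*(\nu)]\in \pi_{l-1}(SO(l))$ via the Smale regular-homotopy class of the composite immersion $S^l\xrightarrow{\pi}\PP^l\hookrightarrow \bbR^{2l}$, using the fibre sequence $SO(l)\to SO(2l)\to V_l(\bbR^{2l})$ to relate it to an element of $\pi_l(V_l(\bbR^{2l}))$. Applying the $J$-homomorphism $J\colon \pi_{l-1}(SO(l))\to\pi^S_{l-1}$ converts the question into one about the stable Hopf invariant of $\pi$ regarded as a framed immersion, which by Pontrjagin--Thom becomes a framed cobordism calculation for the double cover.

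The remaining content is the $2$-primary calculation deciding this stable invariant as a function of $l$. For the ``only if'' direction, the formula $w(\tau_{\PP^l})=(1+a)^{l+1}$ in $H^*(\PP^l;\cy{2})$ is nontrivial exactly when $l\neq 2^j-1$, and this nontriviality obstructs the equality $\pi^*(\nu)\cong\tau_{S^l}$, forcing $\pi^*(\nu)=\epsilon^l$. For the ``if'' direction with $l=2^j-1$ and $j>3$, I would work in $\widetilde{KO}(\PP^l)\cong\bZ/2^{\phi(l)}$ (generated by $[\gamma]-1$), use the relation $\tau_{\PP^l}+\epsilon=2^j\gamma$ to locate $[\nu]$ precisely, and note that the condition $j>3$ (which excludes $l\in\{1,3,7\}$) is exactly what guarantees $[\tau_{S^l}]$ is itself nonzero, so the asserted isomorphism is nontautological. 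The main obstacle is this last step: producing, from the $KO$-theoretic class of $\nu$, the correct unstable $l$-plane bundle on $\PP^l$ and then an actual immersion of $\PP^l$ in $\bbR^{2l}$ realising it, which requires the delicate interplay between the $2$-adic valuation of $l+1$ and the unstable classification of bundles captured in Brown's original argument \cite{brown_edgar2}.
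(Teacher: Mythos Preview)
The paper does not prove this statement; it is quoted as a theorem of Brown \cite{brown_edgar2} and used as an input. So there is no ``paper's own proof'' to compare against. That said, the paper's Proposition~\ref{prop:stable_condition} and Corollary~\ref{cor:admisable_smooth_bundles} contain exactly the machinery that yields Brown's result (for $l\geq 9$): the stable class of the immersion normal bundle is $-(l{+}1)\cdot\eta$, and the admissibility criterion $\binom{-(l+1)}{\,l+1\,}\equiv\binom{2l+1}{\,l+1\,}\equiv 1\pmod 2$ holds precisely when $l+1$ is a power of~$2$, by Lucas' theorem. This is the clean route, and it is worth noticing that your target paper already has it in hand.

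Your own proposal has the right opening moves (stably trivial pullback, two-element classification via Adams), but the ``only if'' step contains a genuine gap. You write that when $l\neq 2^j-1$ the nontriviality of $w(\tau_{\PP^l})=(1+a)^{l+1}$ ``obstructs the equality $\pi^*(\nu)\cong\tau_{S^l}$''. It does not: all Stiefel--Whitney classes in degrees $1,\dots,l-1$ die under $\pi^*$ because $H^i(S^l;\bZ/2)=0$ there, and $w_l$ vanishes for both candidates since $l$ is odd. Primary characteristic classes on $S^l$ cannot separate $\epsilon^l$ from $\tau_{S^l}$, as you yourself observe two sentences earlier. The correct obstruction lives one cell up: one extends the stable class to $\PP^{l+1}$ and reads off $w_{l+1}$ of the extension, which is exactly the content of Proposition~\ref{prop:stable_condition}. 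Your $J$-homomorphism/framed-cobordism outline for the ``if'' direction is a plausible alternative, but as you acknowledge it is only a sketch; the $w_{l+1}$-on-$\PP^{l+1}$ argument handles both directions uniformly and avoids the unstable bundle realisation problem you flag at the end.
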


This choice fits well with the construction of smooth frame-preserving free involutions in the cases where $\MK^{4k{+}2}$ is smoothable, since then $\WW  = D(\xi)$ will be parallelizable. In general, we can take any PL-bundle $\xi$ of dimension $2k{+}1$, with the required property for $\pi^*(\xi)$. 

\begin{definition}\label{def:admissible}
A PL-bundle $\xi$ of dimension $2k{+}1$ over $\PP^{2k{+}1}$ is called an \emph{admissible}  bundle if $\pi^*(\xi) \cong \tau_{S^{2k{+}1}}$. If $M$ has characteristic bundle $\xi = \xi(M)$, then we will say that $\wM$ has  an  \emph{involution of type $\xi$}.
\end{definition}

Here is a \emph{stable} characterization of admissible  bundles.
Let $i\colon \PP^{2k{+}1} \to \PP^{2k{+}2}$ denote the standard inclusion. 

\begin{proposition}\label{prop:stable_condition} 
Let $\xi$ be a PL-bundle of dimension $2k{+}1$, for $k \geq 4$, with $\pi^*(\xi)$ stably trivial, and let  $\gamma \in KPL(\PP^{2k{+}1})$ denote the stable equivalence class of $\xi$. Then
 $\pi^*(\xi) \cong \tau_{S^{2k{+}1}}$ if and only if
  there exists $\hat \gamma \in KPL(\PP^{2k{+}2})$, such that $i^*(\hat\gamma) = \gamma$ and
$w_{2k{+}2}(\hat\gamma) \neq 0$.
\end{proposition}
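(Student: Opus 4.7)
The plan is to apply $[-,BPL]$ to the Puppe cofiber sequence
\[
S^{2k+1} \xrightarrow{\pi} \PP^{2k+1} \xrightarrow{i} \PP^{2k+2} \xrightarrow{\delta} S^{2k+2}
\]
(with $\delta$ the collapse of $\PP^{2k+1}$), producing the exact sequence
\[
\pi_{2k+2}(BPL) \xrightarrow{\delta^*} KPL(\PP^{2k+2}) \xrightarrow{i^*} KPL(\PP^{2k+1}) \xrightarrow{\pi^*} \pi_{2k+1}(BPL).
\]
The hypothesis that $\pi^*(\xi)$ is stably trivial is precisely $\pi^*(\gamma)=0$, so extensions $\hat\gamma$ of $\gamma$ exist and form a coset of the image of $\delta^*$; what remains is to detect when this coset contains an element with $w_{2k+2}\neq 0$.

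For $(\Rightarrow)$, assume $\pi^*(\xi)\cong \tau_{S^{2k+1}}$. Then the $(2k{+}2)$-dimensional PL bundle $\xi\oplus\underline{1}$ has pullback $\pi^*(\xi\oplus\underline{1})\cong\tau_{S^{2k+1}}\oplus\underline{1}\cong\underline{2k+2}$, trivialized by the standard parallelization $(v,t)\mapsto v+tx$ coming from $S^{2k+1}\subset\mathbb R^{2k+2}$. Hence $\xi\oplus\underline{1}$ extends unstably to a $(2k{+}2)$-dimensional PL bundle $\bar\xi$ on $\PP^{2k+2}$, and I set $\hat\gamma:=[\bar\xi]-(2k+2)$. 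Since $\bar\xi$ has rank equal to $\dim\PP^{2k+2}$, $w_{2k+2}(\hat\gamma)=w_{2k+2}(\bar\xi)=e(\bar\xi)\bmod 2$. The canonical unit section of the $\underline{1}$-summand over $\PP^{2k+1}$ pulls back, under the chosen trivialization, to the radial map $x\mapsto x$ from $S^{2k+1}$ into $\mathbb R^{2k+2}\setminus 0$ of degree $\pm 1$; this cannot be extended across the attached $(2k{+}2)$-disk without a zero, so the primary obstruction $e(\bar\xi)$ is odd and $w_{2k+2}(\hat\gamma)\neq 0$.

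For $(\Leftarrow)$, the classification of stably trivial $(2k{+}1)$-plane bundles on $S^{2k+1}$ recalled just above the statement (valid for $2k+1\neq 1,3,7$, hence for $k\geq 4$) shows that the only alternative to $\pi^*(\xi)\cong\tau_{S^{2k+1}}$ is that $\pi^*(\xi)$ is unstably trivial. In that case $\xi$ extends to a $(2k{+}1)$-dimensional PL bundle $\bar\xi$ on $\PP^{2k+2}$, and $\hat\gamma_0:=[\bar\xi]-(2k+1)$ is an extension with $w_{2k+2}(\hat\gamma_0)=0$ on rank grounds. Every other extension is $\hat\gamma_0+\delta^*(\eta)$ for some $\eta\in\pi_{2k+2}(BPL)$, and since $\delta^*(\eta)$ is pulled back from $S^{2k+2}$ its lower SW classes vanish on $\PP^{2k+2}$, so $w_{2k+2}(\hat\gamma_0+\delta^*\eta)=\delta^*(w_{2k+2}(\eta))$. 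The main obstacle --- and the step that forces the hypothesis $k\geq 4$ --- is thus to verify that $w_{2k+2}\colon\pi_{2k+2}(BPL)\to\cy 2$ is the zero map in this range. I would deduce this by passing through $BO\to BPL$: a Bott-periodicity computation shows the top SW class vanishes on $\pi_m(BO)$ for $m\geq 9$ (the $\bZ$-generators of $\pi_{4j}(BO)$ have Pontryagin class $p_j$ divisible by $(2j-1)!$, hence even once $j\geq 2$, while the $\cy 2$-generators in degrees $8j{+}1,8j{+}2$ are $\eta$-multiples on which the top SW class vanishes), and the classes in $\pi_{2k+2}(BPL)$ not in the image of $\pi_{2k+2}(BO)$ correspond to exotic smoothings of the trivial PL bundle and thus have vanishing PL Stiefel--Whitney classes. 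Together these give $w_{2k+2}(\hat\gamma)=0$ for every extension when $\pi^*(\xi)$ is trivial, completing the contrapositive.
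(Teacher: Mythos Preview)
Your overall architecture matches the paper's: both use the Puppe sequence to see that extensions $\hat\gamma$ exist and differ by classes from $S^{2k+2}$, and both identify the crux as the vanishing of $w_{2k+2}$ on such classes. Your $(\Leftarrow)$ direction (extend $\xi$ itself to a $(2k{+}1)$-bundle over $\PP^{2k+2}$, observe $w_{2k+2}=0$ on rank grounds, then handle the ambiguity in the extension) is exactly what the paper does.

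Your $(\Rightarrow)$ direction is genuinely different and rather nice: you build a specific $(2k{+}2)$-dimensional extension using the standard trivialization of $\tau_{S^{2k+1}}\oplus\underline 1$ and read off $w_{2k+2}$ as the mod~$2$ Euler class via an explicit obstruction computation. The paper instead argues by contradiction: if $w_{2k+2}(\hat\gamma)=0$ then $\hat\gamma$ admits a $(2k{+}1)$-dimensional reduction $\hat\xi$, and $\xi':=i^*(\hat\xi)$ has $\pi^*(\xi')$ trivial; but the paper observes (using that $\tau_{S^{2k+1}}$ has order two) that any two $(2k{+}1)$-reductions of $\gamma$ have isomorphic $\pi^*$, forcing $\pi^*(\xi)$ trivial as well. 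Your direct computation avoids this last step and is more constructive; the paper's route is shorter once the preliminary facts are in place.

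There is, however, a genuine gap in your justification that $w_{2k+2}\colon\pi_{2k+2}(BPL)\to\cy 2$ vanishes. The Pontryagin-class argument does not work: the relation is $p_j\equiv w_{2j}^2\pmod 2$, not $p_j\equiv w_{4j}$, so ``$p_j$ even'' gives no information about $w_{4j}$. Indeed for the generator of $\pi_8(BO)$ one has $p_2$ even but $w_8\neq 0$. The correct argument, and the one the paper invokes, is Adams' Hopf invariant one theorem: for any stable (spherical, hence PL or vector) bundle $\zeta$ over $S^n$, the Thom space is a two-cell complex $S^N\cup_\phi e^{N+n}$ and $w_n(\zeta)\neq 0$ is equivalent to $Sq^n$ acting nontrivially, i.e.\ to $\phi$ having odd Hopf invariant; by Adams this forces $n\in\{1,2,4,8\}$. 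This single statement handles $\pi_{2k+2}(BPL)$ in one stroke for $k\geq 4$ and makes your separate ``exotic smoothings'' discussion unnecessary. With this replacement your proof is complete.
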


We first recall some facts about PL-bundles and discuss the stable conditions. 
\begin{enumerate}
\addtolength{\itemsep}{0.2\baselineskip}
\item By assumption, $\pi^*(\xi)$ is stably trivial and hence $\pi^*(\gamma)$ is also stably trivial. It follows from the
 cofibration sequence
$$ KPL(S^{2k{+}2}) \to  KPL(\PP^{2k{+}2}) \xrightarrow{i^*} KPL(\PP^{2k{+}1}) \xrightarrow{\pi^*} KPL(S^{2k{+}1}),$$
 that  $i^*(\hat\gamma) = \gamma$, for some $\hat\gamma \in KPL(\PP^{2k{+}2})$.
For vector bundles, $KO(\PP^{2k{+}1})$ is additively generated by the canonical line bundle $\eta \searrow \PP^{2k{+}1}$ (see Fujii \cite{fujii1}), 
 so this condition is automatic.

\item Any stable bundle $\hat\gamma$ over $\PP^{2k{+}2}$ admits an unstable reduction to a $(2k{+}2)$-dimensional bundle $\xi_0$ (see Haefliger and Wall \cite{haefliger-wall1}).  
Recall that $w_{2k{+}2}(\hat\xi_0) = w_{2k{+}2}(\hat\gamma)$ is the mod 2 reduction of the twisted Euler class $$e(\hat\xi_0) \in H^{2k{+}2}(\PP^{2k{+}2}; \Zb).$$ 
 By obstruction theory, $w_{2k{+}2}(\hat\gamma) = 0$ if and only if there exists a $(2k{+}1)$-dimensional reduction $\hat\xi$ of $\hat\gamma$. 
 
 \item The characteristic class $w_{2k{+}2}(\hat\gamma) \in H^{2k{+}2}(\PP^{2k{+}2};\cy 2)$ is independent of the choice of extension $\hat\gamma$ with $i^*(\hat\gamma) = \gamma$.  By Adams \cite{adams1960}, the class $w_{2k{+}2}(\zeta) \equiv 0 \pmod 2$ for  a $(2k{+}2)$-bundle $\zeta$ over $S^{2k{+}2}$, since $k \geq 4$.

\item Since $k \geq 4$, the tangent bundle $\tau_{S^{2k{+}1}}$ is the unique non-trivial vector bundle of dimension $2k{+}1$ over $S^{2k{+}1}$ which is stably trivial. 
  For PL-bundles, we use the results of  Burghelea and Lashof
\cite[II, \S 5]{burghelea-lashof1}. By stability \cite[Proposition 5.6]{burghelea-lashof1}, we may use PL-bundles instead of PL-block bundles. Then by
\cite[Theorem 5.1']{burghelea-lashof1}, the same uniqueness statement holds for $\tau_{S^{2k{+}1}}$ as a PL-bundle. Hence, the stably trivial bundle $\pi^*(\xi)$ is either trivial or $\pi^*(\xi) \cong \tau_{S^{2k{+}1}}$. 

\item Note also that $\pi^*(\xi) \cong \pi^*(\xi')$ for any two $(2k{+}1)$-dimensional reductions $\xi$ and $\xi'$ of $\gamma$, since $\tau_{S^{2k{+}1}}$ has order two. 
Note that $\xi$ and $\xi'$ differ only on the top $(2k{+}1)$-cell, and applying $\pi^*$ multiplies the bundle by two. 
\end{enumerate}

\begin{proof}[The proof of Proposition \ref{prop:stable_condition}]
Suppose that $\xi$ is some PL-bundle of dimension $2k{+}1$, $k \geq 4$,  with stable class $\gamma \in KPL(\PP^{2k{+}1})$, and  $\pi^*(\xi)$ stably trivial.
   If $\pi^*(\xi)$ is actually the trivial bundle, then 
  the cofibration sequence 
 $$  [\PP^{2k{+}2}, BPL_{2k{+}1}] \xrightarrow{i^*} [\PP^{2k{+}1}, BPL_{2k{+}1}] \xrightarrow{\pi^*} [S^{2k{+}1}, BPL_{2k{+}1}]$$
implies that $i^*(\hat\xi) = \xi$ for some $(2k{+}1)$-bundle over $\PP^{2k{+}2}$. 
Let $\hat\gamma$ denote the stable class of $\hat\xi$, so $i^*(\hat\gamma) = \gamma$. Since $\hat\xi$ is a $(2k{+}1)$-dimensional reduction of $\hat\gamma$, we see that $w_{2k{+}2}(\hat\gamma) = 0$. 

Conversely, if  $\pi^*({\xi})$  is non-trivial  then $\pi^*({\xi})\cong \tau_{S^{2k{+}1}}$. Let $\hat\gamma$ be a stable PL-bundle over $\PP^{2k{+}2}$ such that 
$i^*(\hat\gamma)= \gamma$. Then $w_{2k{+}2}(\hat\gamma) = 0$ would imply that $\hat\gamma$ has a $(2k{+}1)$-dimensional reduction $\hat\xi$, and hence $\xi' = i^*(\hat\xi)$ would be a $(2k{+}1)$-dimensional reduction of $\gamma$. But 
$\pi^*(\xi') = \pi^*(i^*(\hat\xi))$ is trivial, and this is a contradiction since 
$\pi^*(\xi) \cong \pi^*(\xi')$.
 \end{proof}
 
As mentioned above, the group $\wt{KO}(\PP^{2k{+}1})$ 
is cyclic with generator the reduced class 
of the non-trivial line bundle $\eta$ over $\PP^{2k{+}1}$. 
 
\begin{corollary} \label{cor:admisable_smooth_bundles}
A $(2k{+}1)$-dimensional vector bundle $\xi$ over $\PP^{2k{+}1}$ is admissible if and only 
if its stable class $\gamma = m\cdot\eta$ satisfies 
$\binom{m}{2k{+}2} \equiv 1 ~\textup{mod}~2 $. 
\end{corollary}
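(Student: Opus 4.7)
The plan is to apply Proposition \ref{prop:stable_condition} and reduce the question to a direct computation of the top Stiefel--Whitney class of a multiple of the canonical line bundle on $\PP^{2k+2}$.

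First, I would note that for vector bundles, the inclusion $i \colon \PP^{2k+1} \hookrightarrow \PP^{2k+2}$ induces a surjection $\wt{KO}(\PP^{2k+2}) \twoheadrightarrow \wt{KO}(\PP^{2k+1})$, since both groups are cyclic generated by the reduced class of the canonical line bundle $\eta$ (by Fujii \cite{fujii1}, as recalled in the text). Hence the stable class $\gamma = m \cdot \eta \in \wt{KO}(\PP^{2k+1})$ always extends to $\hat\gamma := m \cdot \eta \in \wt{KO}(\PP^{2k+2})$, so the hypothesis of Proposition \ref{prop:stable_condition} that an extension $\hat\gamma$ exists is automatic. By that proposition, $\xi$ is admissible if and only if $w_{2k+2}(\hat\gamma) \neq 0$ for this (or equivalently any) extension.

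Next I would compute $w_{2k+2}(\hat\gamma)$. Let $x \in H^1(\PP^{2k+2}; \cy 2)$ denote the generator, so that $w(\eta) = 1 + x$ and $x^{2k+2}$ generates $H^{2k+2}(\PP^{2k+2}; \cy 2) \cong \cy 2$. Since the total Stiefel--Whitney class is stable and multiplicative, one has
\[
w(\hat\gamma) = w(m \cdot \eta) = w(\eta)^m = (1 + x)^m,
\]
so the degree-$(2k+2)$ component is $w_{2k+2}(\hat\gamma) = \binom{m}{2k+2}\, x^{2k+2}$.

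Combining these two observations yields the equivalence: $\xi$ is admissible if and only if $\binom{m}{2k+2} \equiv 1 \pmod 2$. I do not expect any serious obstacle here — everything reduces to the stability of $w$ and the computation of $(1+x)^m$ modulo $2$. The only minor point to check is that the choice of extension $\hat\gamma$ does not matter, but this is already part of the discussion following Proposition \ref{prop:stable_condition} (using Adams' result that $w_{2k+2}$ vanishes on bundles pulled back from $S^{2k+2}$ in the relevant range $k \geq 4$).
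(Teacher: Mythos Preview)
Your proposal is correct and follows essentially the same route as the paper's own proof: both invoke Proposition~\ref{prop:stable_condition} and compute $w(\hat\gamma) = (1+x)^m$ via the Cartan formula to read off $w_{2k+2}(\hat\gamma) = \binom{m}{2k+2}x^{2k+2}$. Your version is simply more explicit about why the extension $\hat\gamma$ exists and why the choice of extension is irrelevant, points which the paper leaves to the surrounding discussion.
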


\begin{proof}
By the Cartan formula, the total Stiefel-Whitney class of $m \cdot \eta$ is $(1+x)^m$
where $x \in H^1(\PP^{2k{+}1}; \bZ/2)$ is a generator.   Now apply Proposition \ref{prop:stable_condition}. 
\end{proof}

The main step in the proof of Theorem B is based on the following important result from homotopy theory. Let 
$[\iota_{n+1}, \iota_{n+1}] \in \pi_{2n+1}(S^{n+1})$ denote the Whitehead square.

\begin{theorem}[{Barratt-Jones-Mahowald \cite[Cor.~3.2]{barratt-jones-mahowald1}}] \label{thm:B-J-M}
Let $n = 2^{j+1} -2$. 
There exists an element of order two in $\theta_j$ with Kervaire invariant one if and only if 
$[\iota_{n+1}, \iota_{n+1}] = 2\alpha$, for some $\alpha \in  \pi_{2n+1}(S^{n+1})$.
\end{theorem}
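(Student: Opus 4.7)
The plan is to exploit the $2$-local EHP long exact sequence. In the metastable range it reduces to
\[
\bZ\cdot\iota_{2n+3}\xrightarrow{\,P\,}\pi_{2n+1}(S^{n+1})\xrightarrow{\,E\,}\pi_{2n+2}(S^{n+2})=\pi^S_n\longrightarrow 0,
\]
where $P(\iota_{2n+3})=[\iota_{n+1},\iota_{n+1}]$ and $E$ is suspension into the stable stem; surjectivity of $E$ uses $\pi_{2n+2}(S^{2n+3})=0$, and the identification $\pi_{2n+2}(S^{n+2})=\pi^S_n$ is valid since $2n+2<2(n+2)-1$. Thus every class in $\pi^S_n$ lifts to $\pi_{2n+1}(S^{n+1})$, and two lifts differ by an integer multiple of $[\iota_{n+1},\iota_{n+1}]$.

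For the implication $(\Leftarrow)$, suppose $[\iota_{n+1},\iota_{n+1}]=2\alpha$ and set $\bar\alpha:=E(\alpha)\in\pi^S_n$. By exactness $2\bar\alpha=EP(\iota_{2n+3})=0$, so $\bar\alpha$ has order dividing two. To verify $\bar\alpha\in\theta_j$, I would analyse the mapping cone $C_\alpha=S^{n+1}\cup_\alpha e^{2n+2}$: this is a Poincar\'e duality complex of dimension $2n+2=2^{j+2}-2$, and the $2$-divisibility relation forces the unstable mod-two Hopf invariant (the value of $\mathrm{Sq}^{n+1}$ on the middle-dimensional generator) to be non-trivial on $C_\alpha$. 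Using the hypothesis $n+1=2^{j+1}-1$, this mod-two Hopf invariant coincides with the Arf invariant of a Wu orientation on $C_\alpha$, so framed surgery on $C_\alpha$ produces a closed framed manifold representing $\bar\alpha$ with Kervaire invariant one.

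For the implication $(\Rightarrow)$, let $x\in\theta_j$ satisfy $2x=0$ and pick any lift $\tilde x\in\pi_{2n+1}(S^{n+1})$; then $E(2\tilde x)=0$ forces $2\tilde x=m\cdot[\iota_{n+1},\iota_{n+1}]$ for some $m\in\bZ$. Replacing $\tilde x$ by $\tilde x+k[\iota_{n+1},\iota_{n+1}]$ changes $m$ by $2k$, so the parity of $m$ is an invariant of $x$ alone. If $m$ were even, we could adjust $\tilde x$ so that $2\tilde x=0$ in $\pi_{2n+1}(S^{n+1})$, whereupon the cone $C_{\tilde x}$ would carry a null Wu-Arf invariant and, by the same detection as in the previous paragraph, $x$ would have vanishing Kervaire invariant---contradicting $x\in\theta_j$. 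Hence $m$ is odd, and two-locally $\alpha:=m^{-1}\tilde x$ satisfies $2\alpha=[\iota_{n+1},\iota_{n+1}]$; the passage back to an integral equation is harmless because $2$-divisibility of an integral element in a finitely generated abelian group is detected at the prime $2$.

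The main obstacle is the translation, in both directions, between the $2$-divisibility of the Whitehead square $[\iota_{n+1},\iota_{n+1}]$ and the Kervaire invariant of the stable class $\bar\alpha=E(\alpha)\in\pi^S_n$. This is exactly where the hypothesis $n+1=2^{j+1}-1$ becomes essential: Kervaire classes are detected in Adams filtration two by the squaring class $h_{j+1}^2$, and the EHP $d_1$-differentials realise this squaring geometrically. Establishing this correspondence rigorously---rather than plausibly, as above---by comparing the Adams and EHP spectral sequences, is the technical heart of the Barratt--Jones--Mahowald argument.
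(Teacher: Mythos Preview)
The paper does not give its own proof of this statement: it is quoted verbatim as \cite[Cor.~3.2]{barratt-jones-mahowald1} and used as a black box. So there is no ``paper's proof'' to compare against; one can only assess your sketch on its own merits and against what Barratt--Jones--Mahowald actually do.

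Your EHP set-up is correct and is indeed the framework BJM use. The exact sequence
\[
\bZ\cdot\iota_{2n+3}\xrightarrow{\ P\ }\pi_{2n+1}(S^{n+1})\xrightarrow{\ E\ }\pi^S_n\longrightarrow 0
\]
with $P(\iota_{2n+3})=[\iota_{n+1},\iota_{n+1}]$ is right, and the deduction that $2\alpha=[\iota_{n+1},\iota_{n+1}]$ forces $2E(\alpha)=0$ is immediate. Your observation that, since $[\iota_{n+1},\iota_{n+1}]$ has order two (for $n+1\neq 1,3,7$), an odd $m$ in $2\tilde x=m[\iota_{n+1},\iota_{n+1}]$ already gives the integral equation, is also fine; the detour through $2$-localization is unnecessary.

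There are, however, two genuine gaps in the part you yourself flag as the crux. First, the mapping cone $C_\alpha=S^{n+1}\cup_\alpha e^{2n+2}$ is \emph{not} a Poincar\'e complex in the dimensions of interest: Poincar\'e duality on a two-cell complex of this shape requires the \emph{integral} Hopf invariant of $\alpha$ to be $\pm 1$, and by Adams this is impossible for $n+1\geq 15$. So the sentence ``framed surgery on $C_\alpha$ produces a closed framed manifold\dots'' has no content as stated. Second, your $(\Rightarrow)$ argument is circular: the step ``if $m$ were even then $x$ would have vanishing Kervaire invariant'' invokes exactly the detection principle you did not establish in $(\Leftarrow)$. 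What BJM actually prove is that the functional operation detecting the Kervaire invariant on $\pi^S_n$ (essentially a secondary operation built from $Sq^{n+1}$) coincides, after the EHP identification, with the parity of $m$; this is the ``comparing the Adams and EHP spectral sequences'' that you allude to in your closing paragraph, and it is not a formality.
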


A map $\alpha$ given by this result is will be said to \emph{halve the Whitehead square}. If $\VV  = S(\xi)$ for 
some admissible bundle $\xi$, then there is a section $s\colon \PP^{2k{+}1} \to \VV $ arising from a non-zero 
section of $\xi$. Note that in the notation $n= \dim\theta_j$, we have $4k{+}2 = 2n{+}2$.

\begin{definition} \label{def:p_alpha}
 Suppose that $[\iota_{n+1}, \iota_{n+1}] = 2\alpha$, for some 
 $\alpha \in  \pi_{2n+1}(S^{n+1})$, and let $\VV  = S(\xi)$. 
By Lemma \ref{lem:charsection}, the bundle $V \to \PP^{2k{+}1}$ admits a section 
$s \colon \PP^{2k{+1}} \to V$, and we
define the pinch map
$\pa \colon \VV  \to \VV $ as the composite
$$\pa : \xymatrix{\VV  \ar[r] & \VV  \vee S^{2n+1} \ar[r]^{\id \vee \alpha}&
\VV \vee S^{n+1}\ar[r]^(0.45){\id \vee \pi}& \VV  \vee \PP^{n+1}\ar[r]^(0.65){\id  \vee s}& \VV. }$$
\end{definition}
In the Sections \ref{sec:induct} and \ref{sec:smoothing_obstruction}, we will analyze the normal invariants 
of the pinch maps $\pa$ constructed by halving the Whitehead square.  For future use, we prove that $\pa$ preserves $\nu_\VV$, the stable normal bundle of $\VV$.

\begin{lemma} \label{lem:pa_is_tangential}
$\pa^*(\nu_\VV) \cong \nu_\VV$. 
\end{lemma}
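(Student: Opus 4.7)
The plan is to decompose $\pa = (\id \vee f)\circ c$, where $c\colon \VV \to \VV \vee S^{2n+1}$ is the standard pinch map used implicitly in Definition \ref{def:p_alpha} (the first arrow in the diagram) and $f := s\circ\pi\circ\alpha\colon S^{2n+1}\to \VV$. A stable bundle on $\VV \vee S^{2n+1}$ whose restriction to the $S^{2n+1}$ summand is stably trivial has classifying map pointed-homotopic to one which is constant on $S^{2n+1}$, hence factors through the projection $\VV \vee S^{2n+1} \to \VV$; composing this projection with $c$ is homotopic to $\id_\VV$. Therefore it suffices to show that $f^*(\nu_\VV)$ is stably trivial as a bundle on $S^{2n+1}$.

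The key step is a direct computation in reduced $KPL$. Let $p\colon \VV = S(\xi) \to \PP^{2k+1}$ be the sphere bundle projection. The standard formula for the tangent bundle of a sphere bundle gives
$$\tau_\VV \oplus \epsilon^1 \cong p^*\bigl(\tau_{\PP^{2k+1}} \oplus \xi\bigr),$$
so stably $[\nu_\VV] = -p^*\bigl([\tau_{\PP^{2k+1}}] + [\xi]\bigr)$ in $\wt{KPL}(\VV)$. Since $s$ is a section of $p$, pulling back yields
$$s^*[\nu_\VV] \,=\, -[\tau_{\PP^{2k+1}}] - [\xi] \,\in\, \wt{KPL}(\PP^{2k+1}).$$
Now pull back via $\pi\colon S^{2k+1}\to \PP^{2k+1}$. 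Using $\pi^*(\tau_{\PP^{2k+1}}) \cong \tau_{S^{2k+1}}$, together with the admissibility hypothesis $\pi^*(\xi)\cong \tau_{S^{2k+1}}$ from Definition \ref{def:admissible}, gives
$$\pi^*s^*[\nu_\VV] \,=\, -2[\tau_{S^{2k+1}}],$$
which vanishes in $\wt{KPL}(S^{2k+1})$ because the tangent bundle of a sphere is stably trivial. Applying $\alpha^*$ yields $f^*[\nu_\VV] = 0$, so $f^*(\nu_\VV)$ is stably trivial, as required.

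Combining the two steps gives $\pa^*(\nu_\VV) \cong \nu_\VV$. Observe that the argument uses none of the special properties of $\alpha$ beyond the fact that $\alpha\colon S^{2n+1}\to S^{n+1}$; in particular, the halving-the-Whitehead-square property plays no role in tangentiality. The only mildly delicate point is the pointed-wedge reduction in the first paragraph, but this is a routine consequence of the cofibration sequence for $\VV \vee S^{2n+1}$ and the fact that $\pa$ agrees with $\id_\VV$ on the $\VV$-factor up to the wedge collapse. I do not anticipate any genuine obstacle; the whole assertion is essentially bookkeeping on normal bundles, with the admissibility of $\xi$ supplying the one non-formal input.
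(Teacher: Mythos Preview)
Your proof is correct and follows essentially the same approach as the paper's: both reduce to showing that $(s\circ\pi\circ\alpha)^*(\nu_\VV)$ is stably trivial, both compute $s^*(\nu_\VV)$ explicitly over $\PP^{2k+1}$, and both finish by pulling back along $\pi$ and invoking admissibility of $\xi$ together with stable triviality of $\tau_{S^{2k+1}}$. The only cosmetic difference is that the paper writes $\nu_\VV \cong \pi_\xi^*(\nu_{\PP^{n+1}}\oplus -\gamma)$ directly (using $\nu_{\PP^{n+1}}\cong (n+2)\cdot\eta$, so $\pi^*(\nu_{\PP^{n+1}})$ is trivial on the nose), whereas you pass through the tangent-bundle formula $\tau_\VV\oplus\epsilon^1\cong p^*(\tau_{\PP^{2k+1}}\oplus\xi)$ and then use stable triviality of $\tau_{S^{2k+1}}$; these are equivalent bookkeeping choices.
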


\begin{proof}
It is enough to show that $(s \circ \pi \circ \alpha)^*(\nu_\VV) = \alpha^*(\pi^*(s^*(\nu_\VV)))$ is trivial.  
Now $\VV  = S(\xi)$ is the total space of the sphere bundle of $\xi$, and therefore
\[ \nu_{\VV } \cong \pi_\xi^*(\nu_{\PP^{n+1}}) \oplus \pi_\xi^*(-\gamma), \]
where $\pi_\xi \colon \VV  \to \PP^{n+1}$ is the bundle projection, 
$\gamma$ is the stable bundle defined by $\xi$ and $-\gamma$ its stable inverse.  
Since $s \circ \pi_\xi = \Id_{\PP^{n+1}}$, 
\[  s^*(\nu_{\VV})= \nu_{\PP^{n+1}} \oplus (-\gamma), \]
where $\nu_{\PP^{n+1}}$ is the stable normal bundle of $\PP^{n+1}$.   Now $\nu_{\PP^{n+1}} \cong (n{+}2) \cdot \eta$ by \cite[Theorem 4.5]{milnor-stasheff1}, and it follows that $\pi^*(\nu_{\PP^{n+1}})$ is trivial.  By the definition of admissibility,  
$\pi^*(\gamma)$ is trivial.  Hence $\pi^*(s^*(\nu_{\VV}))$ is trivial, proving the lemma. 
\end{proof}

\section{Pinch maps and the Kervaire manifold} \label{sec:pinch}

We begin with the definition of a pinch map.  Let $\XX $ be a closed $m$-manifold and let
$ x \in \pi_m(\XX ) $
be a homotopy class of degree zero.  The pinch map on $x$ is a self-homotopy equivalence $p(x)$ defined as the composite
 \[ p(x): \xymatrix{\XX  \ar[r] & \XX  \vee S^{m}  \ar[r]^(0.65){\id \vee x} & \XX }. \]
With this notation, the map of Definition \ref{def:p_alpha} is $\pa  = p(s \circ \pi \circ \alpha)$.  In this section we
 show that the double covering of the pinch map $p(\alpha)$ can be used to construct the homotopy 
type of the Kervaire manifold $\MK^{4k{+}2}$.

\begin{theorem}\label{thm:homotopy_type} 
Let $\WW = D(\xi)$, for $\xi$ an admissible PL-bundle. If  $[\iota_{n+1}, \iota_{n+1}] = 2\alpha$, for some  $\alpha \in  \pi_{2n+1}(S^{n+1})$, 
then the Poincar\'e complex $\ZZ: = \WW\cup_{\pa } \WW$ constructed from the pinch map $\pa $ has universal covering 
$\wt{\ZZ} \simeq \MK^{4k{+}2}$.
\end{theorem}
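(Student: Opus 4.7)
The plan is to build $\wt\ZZ$ cell by cell and match its cellular structure with that of the Kervaire plumbing.

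\textbf{Fundamental group and lifting.} Since $\WW=D(\xi)$ deformation retracts onto $\PP^{n+1}$ and $\VV=S(\xi)\to\PP^{n+1}$ is a sphere bundle with connected fibre $S^n$, both $\pi_1(\WW)$ and $\pi_1(\VV)$ equal $\bZ/2$, with the inclusions $\VV\hookrightarrow\WW$ being $\pi_1$-isomorphisms. The pinch map $\pa$ is the identity on the $2n$-skeleton of $\VV$, so it acts trivially on $\pi_1$; van Kampen then gives $\pi_1(\ZZ)=\bZ/2$. Thus $\wt\ZZ$ is the universal double cover, and lifting the twisted-double decomposition together with the admissibility identification $\pi^*\xi\cong\tau_{S^{n+1}}$ (where $n+1=2k+1$) gives
\[
\wt\ZZ \;=\; D(\tau_{S^{n+1}})\cup_{\wt{\pa}} D(\tau_{S^{n+1}}),
\]
with $\wt{\pa}=p(\wt s\circ\alpha)$ and $\wt s:S^{n+1}\to S(\tau_{S^{n+1}})$ the canonical lift of $s\circ\pi$.

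\textbf{Homology.} Next I would apply Mayer-Vietoris to $\wt\ZZ=\wt\WW_1\cup_{\wt\VV}\wt\WW_2$. The key observation is that $\wt{\pa}$, being a pinch map, acts as the identity on $H_*(\wt\VV)$ in all degrees below the top. The computation therefore reduces to the untwisted case (where the double is literally $S(\tau_{S^{n+1}}\oplus\epsilon)\cong S^{n+1}\times S^{n+1}$) and yields $H_*(\wt\ZZ)\cong H_*(S^{n+1}\times S^{n+1})$, matching $\MK^{4k+2}$.

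\textbf{Attaching map of the top cell.} The handle decomposition of the twisted double exhibits $\wt\ZZ$ as a CW complex with cells in dimensions $0,\ n+1,\ n+1,\ 2n+2$: the $(n{+}1)$-skeleton is $S^{n+1}\vee S^{n+1}$ (the two zero sections), and the top cell is attached by some $\phi\in\pi_{2n+1}(S^{n+1}\vee S^{n+1})$. For the untwisted gluing one has $\phi_0=[\iota_1,\iota_2]$. Replacing $\id$ by the pinch $\wt{\pa}=p(\wt s\alpha)$ modifies $\phi$ by a correction controlled by $\wt s\alpha$, which decomposes along the wedge axes into terms of the form $\iota_i\circ\alpha$. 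Invoking the halving relation $2\alpha=[\iota_{n+1},\iota_{n+1}]$ from Theorem~\ref{thm:B-J-M}, together with naturality of Whitehead products, I would show that this correction matches the correction produced by the Kervaire plumbing of two copies of $D(\tau_{S^{n+1}})$; concretely, $\phi$ lies in the same orbit as the plumbing attaching map under self-homotopy equivalences of $S^{n+1}\vee S^{n+1}$. Since this plumbing has vanishing Euler numbers (because $n{+}1$ is odd) and Smale's theorem gives $\partial P\cong_{PL}S^{2n+1}$, the CW model $P\cup D^{2n+2}$ of $\MK^{4k+2}$ coincides with that of $\wt\ZZ$, so $\wt\ZZ\simeq\MK^{4k+2}$.

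\textbf{Main obstacle.} The crux is the attaching-map identification: tracking exactly how the pinch $\wt{\pa}$ alters $\phi$ and verifying that the result matches the plumbing attaching map up to a self-homotopy equivalence of $S^{n+1}\vee S^{n+1}$. Here the halving hypothesis $2\alpha=[\iota,\iota]$ is essential, since without it the pinch correction would carry a residual summand in $\pi_{2n+1}(S^{n+1})$ and $\wt\ZZ$ would not be homotopy equivalent to $\MK$.
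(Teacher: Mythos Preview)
Your overall strategy---lift to the universal cover, identify the lifted gluing map, then compare attaching maps with the Kervaire plumbing---is exactly the paper's approach via Lemmas~\ref{lem:p_w} and~\ref{lem:ptop}. But there is a genuine error in your lifting step, and it is precisely the point where the hypothesis $2\alpha=w$ enters.

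You assert that $\wt{\pa}=p(\wt s\circ\alpha)$. This is wrong. The pinch map $\pa=p(s\circ\pi\circ\alpha)$ collapses a single top-dimensional disk in $\VV$; in the double cover $\wt\VV$ the preimage of that disk is \emph{two} disks, so the lifted pinch collapses both, and the lifted element is $\wt x+\tau\wt x$ where $\wt x=\wt s\circ\alpha$ and $\tau$ is the deck transformation. Since $n+1=2k+1$ is odd, the antipodal map on $S^{n+1}$ is homotopic to the identity, hence $\tau$ acts trivially on $\pi_{2n+1}(\wt\VV)$, and therefore
\[
\wt{\pa}=p(\wt s\circ\alpha+\wt s\circ\alpha)=p(\wt s\circ 2\alpha)=p(\wt s\circ w).
\]
This is exactly the content of Lemma~\ref{lem:ptop}. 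The halving relation $2\alpha=w$ is consumed \emph{here}, not later in the attaching-map bookkeeping as you suggest.

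With your incorrect lift $p(\wt s\circ\alpha)$, the pinch correction to the attaching map would be $i_0(\alpha)$ rather than $i_0(w)$, yielding $[i_0,i_1]+i_0(\alpha)+i_1(w)$. Since $\alpha$ has order~$4$ (because $2\alpha=w\neq 0$ and $2w=0$), one has $\alpha\neq w$, so this does \emph{not} match the Kervaire attaching map $[i_0,i_1]+i_0(w)+i_1(w)$ of \eqref{eq:KM}, and no self-equivalence of the wedge will repair it. Once the lift is corrected to $p(\wt s\circ w)$, your attaching-map comparison becomes the computation in Lemma~\ref{lem:p_w} and the proof goes through. (Your separate homology paragraph is then unnecessary.)
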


From its construction, it is clear that the homotopy type of the Kervaire manifold is given by attaching a $(4k{+}2)$-cell to a
wedge of two $S^{2k{+}1}$-spheres:
\[   \MK^{4k{+}2} \simeq (S^{2k{+}1}_0 \vee S^{2k{+}1}_1) \cup_\varphi D^{4k{+}2}.   \]
The homotopy class of the attaching map $\varphi \colon S^{4k+1} \to S^{2k{+}1}_0 \vee S^{2k{+}1}_1$ 
is well known to experts,
but we did not find an explicit statement in the literature. 

\begin{lemma}[c.f.~{\cite[Lemma 8]{wall1962},  \cite[Lemma 8.10]{baues}}] \label{lem:KM}
Let $i_0, i_1 \colon S^{2k{+}1} \to S^{2k{+}1}_0 \vee S^{2k{+}1}_1$ be the inclusion maps of the $(2k{+}1)$-sphere
onto the indicated components of the wedge,
$[i_0, i_1]$ their Whitehead product and $w \in \pi_{4k+1}(S^{2k{+}1})$ the Whitehead square.  
Then
\begin{equation*} 
[\varphi] = [i_0, i_1] + i_{0}(w) + i_{1}(w) \in \pi_{4k+1}(S^{2k{+}1}_0 \vee S^{2k{+}1}_1) .
\end{equation*}
\end{lemma}

\begin{proof}
By the Hilton-Milnor Theorem, \cite[XI, \S 6]{whitehead-book}, we have 
$$ [\varphi] = r[i_0, i_1] +  i_{0}(y_0) + i_{1}(y_1),$$ 
where $y_i \in \pi_{4k+1}(S^{2k+1}_i)$.   The non-singularity of the cup product pairing
on $H^{2k+1}(\MK^{4k+2}; \bZ)$ ensures that $r = 1$.
To determine the homotopy classes classes $x_i$, we look at the collapse map 
$c \colon \MK^{4k+2} \to T(\nu_i)$ where $\nu_i$ is the normal bundle of $S^{2k+1}_i \subset \MK^{4k+2}$
and $T(\nu_i)$ is the Thom space of $\nu_i$.
From the construction of $\MK^{4k+2}$, we see that $\nu_i = \tau_{S^{2k+1}}$, 
and so 
$$ T(\nu_i) = S_i^{2k+1} \cup_{x_i} D^{4k+2},$$
where by \cite[Lemma 1]{milnorJ}  
$x_i = J(\tau_{S^{2k+1}})$ and $J \colon \pi_{2k}(SO(2k{+}1)) \to \pi_{4k+1}(S^{2k+1})$ is the 
$J$-homomorphism of \cite[XI Theorem 4.1]{whitehead-book}.  
Now since since the collapse map $c$ has degree one, $y_i = x_i$
and by \cite[(1.2)]{james-whitehead1}, $x_i = J(\tau_{S^{2k+1}}) = w$.
Hence $y_i = w$ for $i = 0, 1$, which completes the proof.
\end{proof}

Recall that 
$\WW $ is the total space  of a $D^{2k{+}1}$-bundle $\xi$ over $\PP^{2k{+}1}$ whose universal cover 
$\wt \WW $ is PL-homeomorphic to
the unit tangent disc bundle of $S^{2k{+}1}$.  The boundary $\wt\VV=\bd \wt \WW $ is thus the unit
tangent sphere bundle of $S^{2k{+}1}$.  There is a section $\wt s \colon S^{2k{+}1} \to \wt \VV $ covering
the section $s \colon \PP^{2k{+}1} \to \VV$.  We define the pinch map $\pw  := p(\wt s \circ w)$ to be the 
self-homotopy equivalence,
 $$\pw : \xymatrix{\wt \VV  \ar[r] & \wt \VV  \vee S^{4k+1} \ar[r]^{\id \vee w}& \wt \VV  \vee S^{2k{+}1} 
 \ar[r]^(0.625){\id  \vee \,\wt s}& \wt \VV, }$$
and the Poincar\'{e} complex,
$$ \ZZ_w : = \wt{\WW} \cup_{\pw } \wt{\WW},$$
obtained by gluing two copies of $\wt\WW$ together using $\pw $.

\begin{lemma} \label{lem:p_w}
There is a homotopy equivalence $\MK^{4k{+}2} \simeq \ZZ_w$.
\end{lemma}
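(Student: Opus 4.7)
The plan is to compute the homotopy type of $\ZZ_w = \wt\WW \cup_{\pw} \wt\WW$ as a CW complex and compare with the standard presentation \eqref{eq:KM} of $\MK^{4k+2}$. Since each $\wt\WW \simeq S^{2k+1}$ via its zero section, and $\wt\VV$ is $(2k-1)$-connected, a Mayer--Vietoris argument shows $\ZZ_w$ is simply connected with homology concentrated in degrees $0$, $2k+1$, $4k+2$ of ranks $1, 2, 1$. Hence $\ZZ_w$ has the homotopy type of $(S^{2k+1}_0 \vee S^{2k+1}_1) \cup_{\varphi_w} D^{4k+2}$ for some $\varphi_w \in \pi_{4k+1}(S^{2k+1}_0 \vee S^{2k+1}_1)$, and by Whitehead's theorem it suffices to show $[\varphi_w] = [i_0,i_1] + i_0(w) + i_1(w)$.

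First I would handle the untwisted double $\ZZ_{\id} = \wt\WW\cup_{\id} \wt\WW = \bd(\wt\WW \x I) = S(\tau_{S^{2k+1}}\oplus \varepsilon^1)$. Since the stable tangent bundle of any sphere is trivial, $\tau_{S^{2k+1}}\oplus \varepsilon^1 \cong \varepsilon^{2k+2}$, so $\ZZ_{\id} \cong S^{2k+1}\x S^{2k+1}$ with top-cell attaching map the Whitehead product $[\varphi_{\id}] = [i_0,i_1]$.

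Next, I would analyze how the attaching map changes when the identity gluing is replaced by $\pw = p(\wt s\circ w)$. This pinch map is the identity on $\wt\VV$ outside a small $(4k+1)$-disk $D\subset\wt\VV$, and on $D$ factors as the pinch $D\to D/\bd D = S^{4k+1}\xra{w} S^{2k+1}\xra{\wt s} \wt\VV$. The key observation is that modifying the gluing by such a pinch alters the attaching data of the top cell: inside $\ZZ_w$, the pinched $(4k+1)$-sphere must be filled in on each side $\wt\WW_i$, and its bounding disk projects, under the retraction $\wt\WW_i\simeq S^{2k+1}_i$, to the class $w\in\pi_{4k+1}(S^{2k+1}_i)$, because this retraction restricted to the section image $\wt s(S^{2k+1})\subset\wt\VV$ agrees with the identity on $S^{2k+1}$. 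The two contributions together add $i_0(w)+i_1(w)$ to $[\varphi_{\id}]$, so that $[\varphi_w] = [i_0,i_1] + i_0(w) + i_1(w)$, which matches \eqref{eq:KM}.

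The main obstacle I anticipate is making the third paragraph rigorous: carefully tracking the effect of the pinch on the top-cell attaching map and showing the modification is exactly $i_0(w) + i_1(w)$ with the correct signs, with no additional contributions in other Hilton basis components of $\pi_{4k+1}(S^{2k+1}\vee S^{2k+1})$. An alternative, more geometric route is to build explicit handle decompositions of both $\ZZ_w$ and $\MK^{4k+2}$ and identify their $(2k+1)$-handle attaching spheres and framings directly, bypassing the Hilton basis calculation.
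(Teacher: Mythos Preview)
Your outline is the paper's approach: compare $\ZZ_w$ with the trivial double $\ZZ_{\id}$, compute the top-cell attaching map for the latter, and then track how the pinch modifies it. The paper likewise starts from $\ZZ_{\id}\cong S^{2k+1}\times S^{2k+1}$ and invokes \cite[Lemma~8.3]{kervaire-milnor1}.

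There is, however, a genuine gap in your third paragraph, and it is exactly the basis confusion your closing remark anticipates. In your second paragraph you take $i_0,i_1$ to be the \emph{product-factor} spheres, giving $[\varphi_{\id}]=[i_0,i_1]$. In the third paragraph you analyse the pinch by retracting each $\wt\WW_i$ to its zero section and call those zero sections $S^{2k+1}_i$, asserting each side contributes $i_i(w)$. But under the identification $\ZZ_{\id}=S(\tau_{S^{2k+1}}\oplus\varepsilon^1)\cong S^{2k+1}\times S^{2k+1}$ the zero section of either $\wt\WW_i$ is the \emph{diagonal}, not a product factor; the two zero sections are homotopic in $\ZZ_{\id}$ and do not form a basis. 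The pinch is also not symmetric in the two copies: the pinched sphere is sent via $\wt s\circ w$ into a single copy of $\wt\VV$, and its image retracts to a single class (the zero section). Your formula is nevertheless correct in the product basis, but for a different reason: that single contribution equals $(i_0+i_1)(w)=i_0(w)+i_1(w)$ because the zero section is the diagonal, not because ``two sides each contribute $i_i(w)$''.

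The paper sidesteps this by taking the basis $(S^{2k+1}_0,S^{2k+1}_1)$ to be the zero section of one $\wt\WW$ together with a transverse sphere built from two fibre disks. With these normal bundles, \cite[Lemma~8.3]{kervaire-milnor1} gives $[\varphi(\id)]=[i_0,i_1]+i_1(w)$, already containing one $w$-term; since $\wt s$ is homotopic to $i_0$ in this basis, the pinch adds the single term $i_0(w)$ and one lands on \eqref{eq:KM} without any Hilton-component bookkeeping.
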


\begin{proof}  
To identify the homotopy type of $\ZZ_w$, we compare it to the trivial double 
\[ \ZZ_{\id} : = \wt \WW \cup_{\id} \wt \WW \cong S^{2k{+}1} \times S^{2k{+}1} .\]
Let $S^{2k{+}1}_0$ denote the zero section of one copy of $\wt \WW  \subset \ZZ_{\id}$ and let $S^{2k{+}1}_1$
denote a copy of the transverse sphere constructed from two fibre $(2k{+}1)$-disks in the copies of 
the bundle $\wt \WW \to S^{2k{+}1}$.   Applying \cite[Lemma 8.3]{kervaire-milnor1} we deduce that there 
is a homotopy equvialence
\[ \ZZ_{\id} \simeq (S^{2k{+}1}_0 \vee S^{2k{+}1}_1)\cup_{\varphi(\id)} D^{4k{+}2}, \]
where $[\varphi(\id)]  = [i_0, i_1]+ i_1(w)$.  
Since $p(w) \colon \wt \VV  \cong \wt \VV $ is a pinch map on $\wt s \circ w$, it follows that
there is a homotopy equivalence 
\[ \ZZ_w \simeq (S^{2k{+}1}_0 \vee S^{2k{+}1}_1)\cup_{\varphi(w)} D^{4k{+}2}, \]
where $[\varphi(w)] = [\varphi(\id)] + i_0(w)$.  Thus $\varphi(w) = [i_0, i_1] + i_0(w) + i_1(w)$ and
so by Lemma \ref{lem:KM}, $\ZZ_w$ is homotopy equivalent to $\MK^{4k{+}2}$.  
\end{proof}

\begin{lemma} \label{lem:ptop}
The homotopy equivalence $\pa  \colon \VV \simeq \VV$ lifts to $\pw \colon \wt \VV \simeq \wt \VV$.
\end{lemma}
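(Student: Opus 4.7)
The plan is to describe the lift of $\pa$ to $\wt\VV$ as a ``double pinch'' at the two preimages of the basepoint, consolidate this into a single pinch using simple-connectivity of $\wt\VV$, and then identify the resulting class via the covering projection.

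First I would observe that, since $\xi$ is admissible, $\wt\VV \cong S(\pi^*\xi) \cong S(\tau_{S^{2k+1}})$ is the unit tangent sphere bundle of $S^{2k+1}$; for $k \geq 1$ both the base and the fibre $S^{2k}$ are simply connected, so $\wt\VV$ is too. Hence the covering projection $q \colon \wt\VV \to \VV$ is the universal cover and induces an isomorphism $q_* \colon \pi_{2n+1}(\wt\VV) \xrightarrow{\cong} \pi_{2n+1}(\VV)$.

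Next I would analyze the unique lift $\wt{\pa}$ of $\pa = p(y)$, where $y := s \circ \pi \circ \alpha$, normalized by $\wt{\pa}(\wt *_0) = \wt *_0$ for a chosen preimage $\wt *_0$ of the basepoint. Since $\pa$ is the identity on $\VV$ outside a small disk $D$ about the basepoint and on $D$ is a pinch by $y$, the lift $\wt{\pa}$ is the identity outside $q^{-1}(D) = \wt D_0 \sqcup \wt D_1$ (with $\wt D_1$ the disk around $\wt *_1 := T(\wt *_0)$), and on each $\wt D_i$ performs a pinch by the unique based lift $\wt y_i \in \pi_{2n+1}(\wt\VV, \wt *_i)$ of $y$ (uniqueness uses $\pi_1(S^{2n+1})=0$). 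Because $\wt\VV$ is simply connected, sliding $\wt D_1$ along any path to $\wt *_0$ consolidates the two local pinches, and the standard formula $p(a) \circ p(b) \simeq p(a+b)$ for pinches at a common basepoint yields $\wt{\pa} \simeq p(\wt y_0 + \wt y_1)$ in $\pi_{2n+1}(\wt\VV)$. Projecting by $q_*$, using $q \circ \wt s = s \circ \pi$ and the halving identity $w = 2\alpha$, gives
\[
q_*(\wt y_0 + \wt y_1) \;=\; 2[y] \;=\; [s \circ \pi \circ w] \;=\; q_*(\wt s \circ w),
\]
and the injectivity of $q_*$ forces $\wt y_0 + \wt y_1 = \wt s \circ w$. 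Hence $\wt{\pa} \simeq p(\wt s \circ w) = \pw$, as required.

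The main obstacle is the consolidation step: justifying that a pinch performed independently at two distinct preimages of $*$ is homotopic to a single pinch at one basepoint by the sum of the based classes. Simple-connectivity of $\wt\VV$ is essential here, as it ensures that the path used to transport $\wt y_1$ to a based map at $\wt *_0$ does not affect its class in the abelian group $\pi_{2n+1}(\wt\VV)$, so the two local pinches combine cleanly into $p(\wt y_0 + \wt y_1)$. Everything after this is bookkeeping with the $q_*$-isomorphism and the halving hypothesis.
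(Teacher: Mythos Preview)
Your proof is correct and follows essentially the same route as the paper: both identify the lift of the pinch map $p(y)$ as a pinch on the sum of the two lifts of $y$, and then identify that sum with $\wt s \circ w$. The only cosmetic difference is in the last identification: the paper observes directly that the deck transformation $\tau$ (induced by the antipodal map on $S^{n+1}$, which is homotopic to the identity since $n+1$ is odd) acts trivially on homotopy, so $\wt y_0 + \wt y_1 = \wt x + \tau\wt x = 2(\wt s\circ\alpha) = \wt s\circ w$, whereas you reach the same conclusion by projecting through the isomorphism $q_*$ and using $2\alpha = w$.
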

\begin{proof}
For an oriented double cover $\wt \XX \to \XX$ with non-identity deck transformation $\tau$, it is a simple matter to check
that the double cover $\wt p(x) \colon \wt \XX \simeq \wt \XX$ of a pinch map $p(x) \colon \XX \simeq \XX$ on $x$,  satisfies
\[  \wt p(x) = p(\wt x + \tau \wt x), \]
where $\wt x \in \pi_m(\wt \XX) \cong \pi_m(\XX)$ is a lift of $x$.  The lemma follows since 
$\pa  = p(s \circ \pi \circ \alpha)$ pinches along $s(\PP^{n+1})\subset \VV$ and
the deck transformation of the covering $\pi \colon S^{n+1} \to \PP^{n+1}$ is 
homotopic to the identity and so acts trivially on homotopy groups.  Thus
\[ \wt \pa  = \wt p(s \circ \pi \circ \alpha) = p(\wt s \circ \alpha + \wt s \circ \alpha) = p(\wt s \circ (2 \alpha))
 = p(\wt s \circ w) = \pw .\]
\end{proof} 
\begin{proof}[The proof of Theorem  \ref{thm:homotopy_type}] We note that
Lemma \ref{lem:p_w} and Lemma \ref{lem:ptop} imply that $\ZZ_w \simeq \wt\ZZ$,
which completes the proof. 
\end{proof}
%

\section{Tangential  surgery} \label{sec:tangential_surgery}

In this section we recall the
tangential surgery exact sequence and in particular the definition of the normal invariant of
a tangential degree one normal map.  
Our discussion follows \cite[\S2, \S4]{madsen-taylor-williams1} closely, 
however our setting is for closed manifolds, whereas Madsen, Taylor and Williams considered manifolds
with boundary.  

Let $\XX $ be a closed $m$-dimensional manifold, either smooth of PL, with stable normal bundle $\nu_\XX$ 
of rank $k>\!\!>m$.   
The $\cat$ tangential structure set of $\XX $, 
\[  \CTSS(\XX ) : = \{ (\MM , f, b) \vv  f \colon \MM \to \XX, b\colon  \nu_{\MM} \to \nu_\XX \} / \simeq, \]
consists of equivalences classes of triples $(\MM , f, b)$ where $f \colon \MM  \to \XX $ is a homotopy equivalence
and $b \colon \nu_{\MM} \to \nu_\XX $ is a map of stable bundles.
Two structures $(\MM _0, f_0, b_0)$ and $(\MM _1, f_1, b_1)$ are equivalent if there is an $s$-cobordism 
$(U; \MM _0, \MM _1, F, B)$ where $F \colon U \to \XX $ is a simple homotopy equivalence, 
$F \colon \nu_U \to \nu_\XX $ is a bundle map and these data restrict to 
$(\MM _0, g_0, b_0)$ and $(\MM _1, g_1, b_1)$ at the boundary of $U$.

Let $\pi = \pi_1(\XX )$.  The tangential surgery exact sequence for $\XX $ finishes with the following four terms
\eqncount
\begin{equation} \label{eq:ses}
L_{m+1}(\bZ\pi) \dlra{\rho}  \CTSS(\XX ) \dlra{\omega} \CTNI(\XX ) \dlra{\sigma} L_m(\bZ\pi), 
\end{equation}
where $L_\ast(\bZ\pi)$ are the surgery obstruction groups \cite[Chap.~10]{wall-book} and $\CTNI(\XX )$ is the set of tangential 
normal invariants of $\XX$.

\begin{remark}
The definition of $\CTNI(\XX )$ is similar to the definition of $\CTSS(\XX )$ except that for representatives
$(\MM , f, b)$ we require only that $f \colon \MM  \to \XX $ is a degree one map.  The equivalence relation,
often called {\em normal cobordism}, is defined using
normal cobordisms over $(\XX , \nu_\XX )$.
In other words, $\CTNI(\XX)$ is the bordism set 
$\Omega_m(\XX, \nu_\XX)_1 \subset \Omega_m(\XX, \nu_\XX)$ 
of normal cobordism classes of normal $(\XX, \nu_\XX)$-manifolds $(M, f, b)$ as defined in 
\cite[Chapter II]{stong-book},
where in addition $f \colon \MM \to \XX$ has degree one.
\end{remark}

Let $T(\nu_\XX )$ denote the Thom-space of $\nu_\XX$
and  $\rho_M\colon S^{m+k} \to T(\nu_M)$ denote the (canonical) collapse map arising from a stable embedding of $M^m \subset S^{m+k}$.  The Pontrjagin-Thom isomorphism,
\[ \pt \colon \Omega_m(\XX, \nu_\XX) \cong \pi_{m+k}(T(\nu_\XX)),  \quad [\MM , f, b] \mapsto [T(b)\circ \rho_M],  \]
identifies the bordism group of normal $(\XX, \nu_\XX)$-manifolds (of any degree) with the stable homotopy group
 of $T(\nu_\XX)$.  Here $T(b) \colon T(\nu_\MM) \to T(\nu_\XX)$ is the map of Thom spaces induced by the
 bundle map $b \colon \nu_\MM \to \nu_\XX$. 
 This isomorphism specialises to the bijection
\[ \pt \colon \CTNI(\XX ) = \Omega_m(\XX , \nu_\XX )_1 \cong \pi_{m+k}(T(\nu_\XX ))_1, \]
%
 where the subscript $1$ indicates the the pre-image of $1 \in \bbZ$ under the Thom maps 
\[ \Omega_m(\XX , \nu_\XX ) \to H_m(\XX ; \bbZ) \quad \text{and} \quad \pi_{m+k}(T(\nu_\XX )) \to H_m(\XX ; \bbZ). \]

Spanier-Whitehead duality, henceforth $\sw$-duality, defines a contravariant functor on the stable homotopy category of stable finite CW complexes:
see, for example \cite[I.4]{browder72}.  
Recall that the $\sw$-dual of $T(\nu_\XX )$ is $\XX _+$, the disjoint union of $X$ and a point.
Given a map  
$$\rho \colon  S^{m+k} \to T(\nu_\XX ),   $$
the $\sw$-\emph{dual} of $\rho$ is a stable map
$ \SW(\rho) \colon \XX _+ \to S^0 $
and the \emph{adjoint} of $\SW(\rho)$ is a map
$ \DD(\rho) \colon \XX  \to \qsn $,
where $\qsn = \Omega^\infty S^\infty$ has its usual meaning.  
In particular, ``degree" defines a homomorphism 
$\pi_0(\qsn) \cong \bbZ$ and we let $\qsna_a$ be the $a$-th component of $\qsn$. 
The space $\qsn$ is an $H$-space under the loop product $\ast \colon \qsn \times \qsn \to \qsn$ which
satisfies
\[   \ast \colon \qsna_a \times \qsna_b \to \qsna_{a+b}, \]
and for any space $\XX$ there is a free and transitive action
\[ [\XX , \qsna_1] \times [\XX, \qsna_0] \xra{\ast} [\XX , \qsna_1] \quad ([\varphi], [\alpha]) \mapsto [\varphi] \ast [\alpha] .\]

\begin{lemma} \label{lem:normal_invariant_1}
There is an isomorphism of abelian groups,
\[  \DD \colon \pi_{m+k}(T(\nu_\XX )) \cong [\XX , \qsn], \quad [\rho] \mapsto [\DD(\rho)],  \]
such that
\begin{enumerate}
\item $\DD(\pi_{m+k}(T(\nu_\XX ))_a) = [\XX , \qsna_a]$, \label{it:Ddeg}
\item $\DD(\pt[\XX , \Id, \Id]) = [1]$, the constant map at the identity in $\qsna_1$. \label{it:Did}
\end{enumerate}
\end{lemma}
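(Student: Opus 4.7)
The plan is to construct $\DD$ as the composition of two natural isomorphisms---one coming from Spanier--Whitehead duality, the other from the representability of zeroth stable cohomotopy by $\qsn$---and then to verify (i) and (ii) by tracking the behaviour on degree and on the Pontryagin--Thom class of the identity map.

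First I would fix an embedding $\XX\hookrightarrow S^{m+k}$ with normal bundle $\nu_\XX$; the Pontryagin--Thom collapse $\rho_\XX\colon S^{m+k}\to T(\nu_\XX)$ exhibits $T(\nu_\XX)$ as the $\sw$-dual of $\XX_+$ inside $S^{m+k}$. The duality pairing therefore induces a natural isomorphism of abelian groups
$$\pi_{m+k}(T(\nu_\XX))=\{S^{m+k},T(\nu_\XX)\}\xrightarrow{\ \cong\ }\{\XX_+,S^0\},\qquad [\rho]\longmapsto[\SW(\rho)].$$
Since $\qsn=\Omega^\infty S^\infty$ represents zeroth stable cohomotopy and has a canonical basepoint, the adjunction between adding and forgetting a disjoint basepoint yields
$$\{\XX_+,S^0\}\cong [\XX_+,\qsn]_*\cong [\XX,\qsn],\qquad [\SW(\rho)]\longmapsto [\DD(\rho)].$$
Composing these two isomorphisms gives the desired $\DD$; both are isomorphisms of abelian groups because the $\sw$-duality pairing is additive and $\qsn$ is an $H$-space under the loop product.

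For (i), I would argue that the degree map $\pi_{m+k}(T(\nu_\XX))\to\bbZ$ induced by the Thom projection $\pi_{m+k}(T(\nu_\XX))\to H_m(\XX;\bbZ)\cong\bbZ$ agrees under $\DD$ with the component map $[\XX,\qsn]\to\pi_0(\qsn)\cong\bbZ$. By naturality of $\sw$-duality under the inclusion of a point $\{x\}\hookrightarrow\XX$ (for which $T(\nu_{\{x\}})\simeq S^k$ and $\XX_+$ collapses to $S^0$), both maps reduce to the tautological isomorphism $\pi_k(S^k)\cong\bbZ\cong\pi_0(\qsn)$. Hence $[\rho]\in\pi_{m+k}(T(\nu_\XX))_a$ if and only if $[\DD(\rho)]\in[\XX,\qsna_a]$, proving (i).

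For (ii), the class $\pt[\XX,\Id,\Id]$ is, by definition of the Pontryagin--Thom isomorphism, represented by the collapse $\rho_\XX$ itself. Under $\sw$-duality, $\rho_\XX$ is precisely the unit of the duality pairing, so that $\SW(\rho_\XX)\colon\XX_+\to S^0$ represents the unit $1\in\{\XX_+,S^0\}$; equivalently, it is the map sending all of $\XX$ to the non-basepoint of $S^0$. Passing to its adjoint shows that $\DD(\rho_\XX)\colon\XX\to\qsn$ is the constant map at the identity of $\qsna_1$, the image of $1\in S^0$ under the unit map $S^0\hookrightarrow\qsn$, which is exactly the assertion $[1]$.

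The main obstacle is really bookkeeping: identifying the $\sw$-dual of the Pontryagin--Thom collapse with the unit of the duality pairing and checking that degree on the Thom-space side matches the component on the $\qsn$ side. Both facts are standard once a normalisation of $\sw$-duality compatible with the Pontryagin--Thom construction has been fixed; the relevant normalisations are carried out in detail in \cite[\S 4]{madsen-taylor-williams1}, which I would follow.
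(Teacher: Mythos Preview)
Your approach is essentially the same as the paper's: build $\DD$ as the composite of Spanier--Whitehead duality and the $\qsn$-adjunction, then verify (i) by comparing degree and component maps and (ii) by identifying the $\sw$-dual of the Pontryagin--Thom collapse $\rho_\XX$. For (ii) the paper carries out exactly the verification you defer, writing down the Atiyah duality map $\lambda_\XX = T(\Delta_\XX)\circ\rho_\XX$ and checking directly that the $\sw$-dual of $\rho_\XX$ is the collapse $c_{S^0}\colon \XX_+\to S^0$.

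There is one slip in your argument for (i): the Thom space $T(\nu_{\{x\}})$ is $S^{m+k}$, not $S^k$, since the point $\{x\}$ sits in $S^{m+k}$ with normal bundle of rank $m+k$ (equivalently, $\nu_{\{x\}} = \nu_{\{x\}\subset\XX}\oplus (\nu_\XX)_x$ has rank $m+k$). With $S^k$ the argument breaks, because $\pi_{m+k}(S^k)=\pi_m^S$ rather than $\bbZ$. Once corrected, your naturality argument under $\{x\}\hookrightarrow\XX$ is exactly dual to the paper's, which instead phrases (i) via the top-cell collapse $c_{S^{m+k}}\colon T(\nu_\XX)\to S^{m+k}$ and its $\sw$-dual, the basepoint inclusion $+\to\XX_+$; these are the same map viewed from opposite sides of the duality.
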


\begin{proof}
That $\DD$ is an additive isomorphism follows from the properties of $\sw$-duality and the 
adjoint correspondence.  In particular, the loop product corresponds to the addition of stable 
maps under $\sw$-duality and passing to adjoints.  

(i) Let $c_{S^{m+k}} \colon  T(\nu_{\XX}) \to S^{m+k}$ be the degree one collapse to the top
cell of the Thom space.
Given a map $\rho \colon S^{m+k} \to T(\nu_{\XX})$ the degree of $c_{S^{m+k}} \circ \rho$ is the degree
of the normal map corresponding to $\rho$.  But the $\sw$-dual of $c_{S^{m+k}}$ is
the inclusion of the base-point $+ \to \XX_+$ and hence the degree of $c_{S^{m+k}} \circ \rho$
is given by the component of $\DD(\rho)$ in $\qsn$.

(ii) This is an exercise is $\sw$-duality.
By \cite[Theorem I.4.13]{browder72}, two spaces $A$ and $A'$ are $\sw$-dual if and only if
there is a map $\lambda \colon S^d \to A \wedge A'$ such that slant product with $\lambda_*([S^d])$
induces an isomorphism $H^q(A) \cong H_{d-q}(B)$ for all $q$.
An elegant duality map for the $\sw$-dual pair $(T(\nu_{\XX}), \XX_+)$ is the 
``Atiyah duality map'' as described in \cite[\S 3]{klein00}.
Let $\rho_\XX \colon S^{m+k} \to T(\nu_{\XX})$ be the Thom collapse map and let 
$T(\Delta_{\XX}) \colon T(\nu_{\XX}) \to T(\nu_{\XX}) \wedge \XX_+$ 
by the map of Thom spaces induced by the bundle map
\[ \Delta_{\XX} \colon  \nu_\XX  \to {\rm pr}_1^*(\nu_{\XX}) \]
where ${\rm pr}_1 \colon \XX \times \XX \to \XX$ is the projection to the first factor.
Then $\lambda_{\XX} : = \rho_{\XX} \circ T(\Delta_{\XX})$ is an $m$-duality map for $(T(\nu_{\XX}), M_+)$.

Now let $c_{S^0} \colon \XX_+ \to S^0$ be the collapse map collapsing $\XX$ to a point and preserving base-points.
There this is a commutative diagram,
\[  \xymatrix{ S^{m+k} \ar[d]^{\lambda_{\XX}}  \ar[rr]^(0.4){\Id} & &  S^{m+k} \wedge S^0 \ar[d]^{\rho_{\XX} \wedge \Id} \\
T(\nu_{\XX}) \wedge \XX_+ \ar[rr]^{\Id \wedge c_{S^0}} & & T(\nu_{\XX}) \wedge S^0,  }   \]
and so by \cite[Theorem I.4.14]{browder72}, $c_{S^0}$ is the $\sw$-dual of $\rho_{\XX} = \pt([\XX, \Id, \Id])$.  
The adjoint of $c_{S^0}$ is the constant map at $[1]$ and this completes the proof. 
\end{proof}

By definition $\qsna_1 = SG$, the space of stable orientation-preserving self-homotopy equivalences of 
the sphere.   We define the tangential normal invariant to be the map
\eqncount
\begin{equation} \label{eq:normal_invariant}
 \Nt \colon \CTNI(\XX ) \longrightarrow [\XX , SG], \quad [\MM , f, b] \longmapsto \DD\bigl( \pt([\MM , f, b]) \bigr) .
\end{equation}
By Lemma \ref{lem:normal_invariant_1} we see that $\Nt$ is a set bijection such that
$\Nt([\XX , \Id, \Id]) = [1]$.  
The following lemma is a direct consequence of the definition of $\Nt$ and Lemma \ref{lem:normal_invariant_1}.

\begin{lemma} \label{lem:normal_invariant_2}
Let $[P, h, b] \in \Omega_m(\XX , \nu_\XX )_0$ with $\pt([P, h, b]) = \rho_b \in \pi_{m+k}(T(\nu_\XX))_0$.
Then 
$$ \Nt \bigl( [\XX , \Id, \Id] + [P, h, b] \bigr) = [1] \ast \DD(\rho_b). $$
\end{lemma}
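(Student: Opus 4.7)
The plan is to unfold the definitions of $\Nt$, $\pt$, and $\DD$, and then invoke the additivity already set up in Lemma \ref{lem:normal_invariant_1}. First, I would observe that the bordism group $\Omega_m(\XX, \nu_\XX)$ is abelian under disjoint union, and that $\CTNI(\XX) = \Omega_m(\XX, \nu_\XX)_1$ is a torsor over the degree zero subgroup $\Omega_m(\XX, \nu_\XX)_0$ via this operation. Since the Pontrjagin-Thom isomorphism is induced by the Thom collapse, it is additive with respect to disjoint union; hence
\[ \pt\bigl([\XX, \Id, \Id] + [P, h, b]\bigr) \, = \, \pt([\XX, \Id, \Id]) + \rho_b \]
in $\pi_{m+k}(T(\nu_\XX))$.

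Next, as recorded in the proof of Lemma \ref{lem:normal_invariant_1}, $\sw$-duality converts addition of stable maps into the loop product, and passing to adjoints gives $\DD(\rho + \rho') = \DD(\rho) \ast \DD(\rho')$. Applying this to the computation above, and using Lemma \ref{lem:normal_invariant_1}(ii) to identify $\DD(\pt([\XX, \Id, \Id])) = [1]$, yields
\[ \Nt\bigl([\XX, \Id, \Id] + [P, h, b]\bigr) \, = \, \DD\bigl(\pt([\XX, \Id, \Id])\bigr) \ast \DD(\rho_b) \, = \, [1] \ast \DD(\rho_b), \]
which is the desired identity.

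There is no real obstacle here: the claim is bookkeeping, consisting of transporting the additive structures on bordism and stable homotopy across the isomorphisms $\pt$ and $\DD$. The only point that deserves care is compatibility of the torsor structures on the two sides. The degree condition on $[P,h,b]$ ensures that the sum lies in $\CTNI(\XX)$, and by Lemma \ref{lem:normal_invariant_1}(i) we have $\DD(\rho_b) \in [\XX, \qsna_0]$, so that the loop product $[1] \ast \DD(\rho_b) \in [\XX, \qsna_1] = [\XX, SG]$ is exactly the free and transitive torsor action recalled just before \eqref{eq:normal_invariant}.
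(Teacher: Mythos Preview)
Your proposal is correct and is precisely the unpacking the paper has in mind: the paper states that Lemma~\ref{lem:normal_invariant_2} ``is a direct consequence of the definition of $\Nt$ and Lemma~\ref{lem:normal_invariant_1}'' and gives no further argument. Your write-up simply makes explicit the additivity of $\pt$ and of $\DD$ (with respect to the loop product) together with part~(ii) of Lemma~\ref{lem:normal_invariant_1}, which is exactly what that sentence encodes.
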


We next prove a lemma about the behaviour of the tangential normal invariant along sub-manifolds.
Let $t \colon \YY \subset \XX$ be the inclusion of a closed submanifold of codimension $a > 0$ and let 
$(f, b) \colon \MM \to \XX$ be a tangential degree one normal map.  Taking the transverse inverse image along 
$\YY$ induces a well-defined map of normal invariant sets
\[  \pitchfork_{\YY} \colon \CTNI(\XX ) \to \CTNI(\YY), \quad [\MM, f, b] \mapsto 
[f^{-1}(\YY), f|_{f^{-1}(\YY)}, b_{\YY, f} \oplus b|_{f^{-1}(_\YY)} ] \]
where $b_{\YY, f} \colon \nu_{f^{-1}(\YY) \subset \MM} \to \nu_h$ is the canonical bundle map given by the implicit function theorem.

\begin{lemma} \label{lem:transverse}
The map $\pitchfork_{\YY} \colon \CTNI(\XX ) \to \CTNI(\YY)$ fits into the following commutative diagram:
\[ \xymatrix{  \CTNI(\XX ) \ar[r]^(0.5){\Nt} \ar[d]^{\pitchfork_{\YY}} & [\XX, SG] \ar[d]^{j^*} \\
 \CTNI(\YY) \ar[r]^(0.5){\Nt}  & [\YY, SG] .}  \]
\end{lemma}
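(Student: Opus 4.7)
The plan is to unwind both sides of the square in terms of the Pontrjagin--Thom/Spanier--Whitehead construction used to define $\Nt$, and then to reduce commutativity to the naturality of $S$-duality under the embedding $t \colon \YY \hookrightarrow \XX$. Under the adjoint correspondence of Lemma \ref{lem:normal_invariant_1}, precomposition with $t$ on $[\XX,\qsn]$ corresponds to postcomposition with the $S$-dual $\SW(t_+) \colon T(\nu_\XX) \to T(\nu_\YY)$ of $t_+\colon \YY_+ \to \XX_+$. Commutativity of the square therefore reduces to the single identity
\[
\pt\bigl(\pitchfork_\YY [\MM,f,b]\bigr) \;=\; \SW(t_+) \circ \pt\bigl([\MM,f,b]\bigr) \quad \in \quad \pi_{m+k}\bigl(T(\nu_\YY)\bigr),
\]
together with the remark that both sides lie in the degree-one component $[\,\cdot\,,SG]\subset [\,\cdot\,,\qsn]$, since the transverse preimage of a degree-one map is again of degree one.

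The next step is the standard identification of $\SW(t_+)$ with a tubular-neighbourhood collapse. Choose a tubular neighbourhood of $\YY$ in $\XX$ and then a tubular neighbourhood of $\XX$ in $S^{m+k}$; stacking them yields a factorization
\[
\rho_\YY\colon S^{m+k}\xra{\rho_\XX} T(\nu_\XX) \xra{\tau} T(\nu_\YY),
\]
where $\tau$ is the Thom collapse onto the tubular neighbourhood of $\YY$, using the stable identification $\nu_\YY \cong \nu_\XX|_\YY \oplus \nu_{\YY\subset \XX}$. A diagram chase with the Atiyah duality maps $\lambda_\XX$ and $\lambda_\YY$ recalled in the proof of Lemma \ref{lem:normal_invariant_1}, together with naturality of the diagonal under $t$, then identifies $\tau$ with $\SW(t_+)$. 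I expect this bookkeeping to be the main obstacle, since it is the step at which the geometry of the inclusion $t$ is genuinely interchanged with the algebra of $S$-duality.

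Granted this identification, transversality finishes the argument geometrically. Because $f$ is transverse to $\YY$, a tubular neighbourhood of $\NN:=f^{-1}(\YY)$ in $\MM$ is the pullback of the chosen tubular neighbourhood of $\YY$ in $\XX$, and the bundle isomorphism $\nu_{\NN\subset \MM}\cong g^*(\nu_{\YY\subset \XX})$ supplied by the implicit function theorem is exactly the summand $b_{\YY,f}$ of the bundle map $b'=b_{\YY,f}\oplus b|_\NN$. This produces a commutative diagram
\[
\xymatrix{
S^{m+k} \ar[r]^{\rho_\MM} \ar@{=}[d] & T(\nu_\MM) \ar[r]^{T(b)} \ar[d]^{\tau_\MM} & T(\nu_\XX) \ar[d]^{\tau} \\
S^{m+k} \ar[r]^{\rho_\NN} & T(\nu_\NN) \ar[r]^{T(b')} & T(\nu_\YY),
}
\]
in which $\tau_\MM$ is the analogous tubular-neighbourhood collapse for $\NN\subset \MM$. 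The bottom composite computes $\pt\bigl(\pitchfork_\YY [\MM,f,b]\bigr)$, while the composite obtained by going along the top row and then down the right-hand column equals $\tau\circ \pt([\MM,f,b]) = \SW(t_+)\circ \pt([\MM,f,b])$, yielding the required identity and hence the lemma.
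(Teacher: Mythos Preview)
Your proposal is correct and follows essentially the same route as the paper: both arguments factor $\Nt$ as $\DD\circ\pt$ and reduce commutativity to the two facts that (i) the $S$-dual of $t_+$ is the Pontryagin--Thom collapse/umkehr map $\wh t\colon T(\nu_\XX)\to T(\nu_\YY)$, and (ii) under $\pt$ the transverse-inverse-image construction corresponds to postcomposition with $\wh t$. The paper's proof simply asserts the resulting three-column commutative diagram ``follows from the definitions'', whereas you have spelled out the Atiyah-duality and transversality bookkeeping in more detail; this is a genuine expansion rather than a different argument.
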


\begin{proof}

Consider the ``wrong way'' map of Thom spaces 
$ \wh j \colon T(\nu_\XX) \to T(\nu_\YY) $
induced by the embedding $j \colon \YY \subset \XX$.  It follows from the definitions of the Pontrjagin-Thom
isomorphism $\pt$ and the duality isomorphism $\DD$ that there is a commutative diagram,
\[  \xymatrix{ \CTNI(\XX) \ar[d]^{\pitchfork_{\YY}} \ar[r]^(0.45){\pt}&
 \pi_{m+k}(T(\nu_{\XX}))_1 \ar[d]^{\wh j_*} \ar[r]^(0.575){\DD} & [\XX, SG] \ar[d]^{j^*}  \\
\CTNI(\YY) \ar[r]^(0.4){\mu_Y} & \pi_{m-a+k}(T(\nu_{\YY}))_1 \ar[r]^(0.6){\DD} & [Y, SG].   } \]
The lemma now follows since by definition $\Nt = \DD \circ \pt$ and similarly for $\DD \circ \mu_Y$.
\end{proof}

We conclude this section by recording the relationship between tangential surgery and classical surgery.
We assume that the reader is familiar with classical surgery as described in \cite{wall-book} and in particular
with the identification of the usual normal invariant set
\[   \Nv \colon \CNI(\XX) \equiv [\XX, G/\cat] . \]
There are natural maps from the tangential surgery exact sequence of \eqref{eq:ses} to the usual
surgery exact sequence
\eqncount
\begin{equation} \label{eq:FSES}
\xymatrix{
L_{m+1}(\bZ\pi) \ar[r]^{\theta} \ar[d]^= & \CTSS(\XX ) \ar[r]^(0.5)\Nt \ar[d]^{} & [\XX, SG] \ar[r]^\sigma \ar[d]^{i_*} & L_m(\bZ\pi) \ar[d]^=\\
L_{m+1}(\bZ\pi) \ar[r]^{\theta} & \CSS(\XX ) \ar[r]^(0.45)\Nv & [\XX, G/\cat] \ar[r]^(0.55)\sigma & L_m(\bZ\pi).
}
\end{equation}
Here we have replaced $\CTNI(\XX)$ with $[\XX, SG]$ using $\Nt$, and $i_*$ is the map induced by the canonical map
$i \colon SG \to G/\cat$ (see \cite[(2.4)]{madsen-taylor-williams1}).

\section{The normal invariants of pinch maps} \label{sec:normal_invariants_of_pinch_maps}
%
In this section we consider the normal invariants of tangential self homotopy equivalences $(\XX, p, b)$
covering certain pinch maps $p \colon \XX \simeq \XX$.
Let $t \colon \YY  \subset \XX $ be the inclusion of closed codimension $l > 0$ submanifold $\YY $
in a closed $m$-manifold $\XX $, 
 in either the smooth or PL categories.  Let $\nu_t$ be the
normal bundle of $t(\YY ) \subset \XX $ so the stable normal bundle of $\YY $ is given by
\eqncount
\begin{equation} \label{eq:normal_bundles}
\nu_\YY  = \nu_t \oplus t^*(\nu_\XX) . 
\end{equation}
A key map in the following will be the collapse map 
\[  t^!_+ \colon \XX \to T(\nu_t) \]
which collapses $\XX$ to the Thom space of $\nu_t$, $T(\nu_t)$, and maps $+$ to
the base-point of $T(\nu_t)$.  We suppose that are given a map
$ y \colon S^m \to \YY  $
such that the composite $x = t \circ y$,
\[ x \colon S^m \xra{y} Y \xra{t} \XX, \]
 pulls back $\nu_\XX $ trivially.  Since $\nu_{S^m}$ is trivial, this is equivalent to assuming
the existence of a bundle map $b_y \colon \nu_{S^m} \to t^*(\nu_\XX) $.   If $b_t \colon t^*(\nu_\XX)  \to \nu_\XX $ 
is the canonical bundle map, we set $b_x := b_t \circ b_y$ and consider the following diagram
of bundle maps:
\[  \xymatrix{  \nu_{S^m} \ar[d] \ar[r]^(0.4){b_y} & t^*(\nu_\XX)  \ar[r]^(0.55){b_t} \ar[d] & \nu_\XX  \ar[d] \\
S^m \ar[r]^y & \YY \ar[r]^t  & \XX  }  \]
The homotopy class $\rho_x  := \pt([S^m, x, b_x])$ is then given as the composite
\eqncount
\begin{equation}   \label{eq:Thom}
\rho_x = (T(b_t) \circ \rho_y )\colon S^{m+k} \xra{~\rho_y~} T(t^*(\nu_\XX) ) \xra{T(b_t)} T(\nu_\XX )
\end{equation}
where $\rho_y$ is the homotopy class $T(b_y)_*(\rho_{S^m}) \in  \pi_{m+k}(T(t^*(\nu_\XX) ))$
and $T(b_t)$ and $T(b_y)$ denote the induced maps of Thom spaces. 
Since $\rho_x$ has degree zero, we have the map $\DD(\rho_x) \colon \XX \to \qsna_0$.  To analyse $\DD(\rho_x)$
we consider the $\sw$-duals of the maps in \eqref{eq:Thom}.

\begin{lemma} \label{lem:Tnut}  \hfill
\begin{enumerate}
\item \label{it:SWT(b_t)}
The $\sw$-dual of $T(b_t) \colon T(t^*(\nu_\XX)) \to T(\nu_\XX)$ is given by
the collapse map of $t$;
$\SW (T(b_t))  =  t^!_+ \colon \XX_+ \to T(\nu_t). $
\item \label{it:DTnut}
$  \DD \colon \pi_{m+k}(T(t^*(\nu_\XX) )) \cong [T(\nu_t), \qsna_0].  $
\item \label{it:Dcx_and_Dcy}
$ \DD(\rho_x) = \DD(\rho_y) \circ t^{!} \in [\XX, \qsna _0] . $
\end{enumerate}
\end{lemma}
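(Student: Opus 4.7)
The plan is to establish (i) first; parts (ii) and (iii) will then follow formally from Spanier--Whitehead duality, in a manner closely parallel to the proof of Lemma \ref{lem:normal_invariant_1}.

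For (i), I would mimic the Atiyah duality construction used there for the pair $(\XX_+, T(\nu_\XX))$, but adapted to the embedded submanifold $t \colon \YY \subset \XX$. The Whitney sum splitting $\nu_\YY = \nu_t \oplus t^*(\nu_\XX)$ from (\ref{eq:normal_bundles}) means that Thom-wise, $T(\nu_\YY)$ is a fibrewise (over $\YY$) smash product of $T(\nu_t)$ and $T(t^*(\nu_\XX))$. Using this, together with the Atiyah pairing $\lambda_\YY$ for $\YY$ and the diagonal $\YY \to \YY \times \YY$, I would construct an $(m+k)$-duality pairing
\[
\lambda'_\YY \colon S^{m+k} \longrightarrow T(t^*(\nu_\XX)) \wedge T(\nu_t),
\]
and then verify that it fits into the commutative square
\[
\xymatrix@C=3em{ S^{m+k} \ar[r]^(0.35){\lambda'_\YY} \ar[d]_{\lambda_\XX} & T(t^*(\nu_\XX)) \wedge T(\nu_t) \ar[d]^{T(b_t) \wedge \Id} \\ T(\nu_\XX) \wedge \XX_+ \ar[r]^(0.55){\Id \wedge t^!_+} & T(\nu_\XX) \wedge T(\nu_t).}
\]
Geometrically, this is the statement that the Thom collapse $\rho_\XX$ factors through a tubular neighborhood of $\YY$, the factoring being $t^!_+$ followed by the bundle Thom map $T(b_t)$. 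Applying \cite[Theorem I.4.14]{browder72} to this square then identifies $\SW(T(b_t))$ with $t^!_+$.

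For (ii), $\lambda'_\YY$ exhibits $T(t^*(\nu_\XX))$ and $T(\nu_t)$ as a Spanier--Whitehead dual pair, so the adjoint correspondence delivers the additive isomorphism
\[
\DD \colon \pi_{m+k}(T(t^*(\nu_\XX))) \cong [T(\nu_t), \qsn],
\]
in exactly the form of Lemma \ref{lem:normal_invariant_1}. Since $T(\nu_t)$ is a connected based space, every based map to $\qsn$ lies in the identity component $\qsna_0$.

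For (iii), I would chase definitions. By (\ref{eq:Thom}), $\rho_x = T(b_t) \circ \rho_y$. Naturality of the adjoint correspondence with respect to the pairings $\lambda_\XX$ and $\lambda'_\YY$ says that post-composition by $T(b_t)$ on the source of $\DD$ corresponds to pre-composition by $\SW(T(b_t)) = t^!_+$ on the target, so
\[
\DD(\rho_x) \;=\; \DD\bigl(T(b_t)_* \rho_y\bigr) \;=\; \DD(\rho_y) \circ t^!_+ \;=\; \DD(\rho_y) \circ t^!
\]
in $[\XX, \qsna_0]$, as claimed. The main obstacle is (i): writing down $\lambda'_\YY$ precisely enough for the displayed square to commute requires careful bookkeeping with Whitney sums, fibrewise smash products of Thom spaces, and the identification $T(E \oplus F) \simeq T(E) \wedge_B T(F)$ for bundles $E, F$ over a common base $B$. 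Once (i) is in hand, the remaining parts are essentially formal.
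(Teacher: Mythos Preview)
Your proposal is correct and follows essentially the same route as the paper. The paper constructs the duality map $\lambda_{\YY,\nu_t} := T(\Delta) \circ \rho_\YY$ (your $\lambda'_\YY$) from the diagonal bundle map $\Delta \colon \nu_\YY \to t^*(\nu_\XX) \times \nu_t$, and verifies the same commutative square; the only real difference is that where you say ``careful bookkeeping with Whitney sums and fibrewise smash products'', the paper instead checks commutativity by passing through the Pontrjagin--Thom isomorphism $\pi_{m+k}(T(\nu_\XX) \wedge T(\nu_t)) \cong \Omega_{m-l}(\XX \times \YY; \nu_\XX \times \nu_t)$ and showing both paths around the square represent the bordism class $[\YY, t \times \Id_\YY, b_\YY]$ --- a cleaner verification than tracking Thom-space identifications directly.
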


\begin{proof}
(i) From the bundle identity $\nu_\YY = \nu_t \oplus t^*(\nu_\XX)$ of  \eqref{eq:normal_bundles},
we have by  \cite[Theorem 3.3]{atiyah2} that 
\[  \SW \bigl( T(t^*(\nu_\XX)) \bigr) \simeq T(\nu_\YY \ominus t^*(\nu_\XX)) \simeq T(\nu_t) .\]
This duality can be realised as follows.  Start with the bundle map $\Delta \colon \nu_\YY \to t^*(\nu_\XX) \times \nu_t$
which covers the diagonal map $Y \to Y \times Y$ and take the composition
\[ \lambda_{\YY, \nu_t} : =  T(\Delta) \circ \rho_\YY \colon S^{m+k} \to  T(\nu_\YY) \to T(t^*(\nu_\XX)) \wedge T(\nu_t) .\]
To verify that $\SW(T(b_t)) = t^!$ we shall show that the following diagram commutes up to homotopy.
\[   \xymatrix{    S^{m+k} \ar[d]^{\lambda_\XX} \ar[r]^{\rho_\XX} & T(\nu_\XX) \ar[r]^{\wh t} & T(\nu_\YY) \ar[r]^(0.325){\lambda_{Y, \nu_t}} & T(t^*(\nu_\XX)) \wedge T(\nu_t) \ar[d]^{T(b_t) \wedge \Id} \\
T(\nu_\XX) \wedge X_+ \ar[rrr]^{\Id \wedge t^!} & & & T(\nu_\XX) \wedge T(\nu_t)   }  \]
Going in either direction around the diagram gives an element of 
\[ \pi_{m+k}(T(\nu_\XX) \wedge T(\nu_t)) \cong \pi_{m+k}(T(\nu_\XX \times \nu_t)) \cong 
\Omega_{m-l}(X \times Y; \nu_\XX \times \nu_t) \]
where the last isomorphism is the Pontrjyagin-Thom isomorphism.  We claim that, in both directions, 
the corresponding normal $(X \times Y, \nu_\XX \times \nu_t)$-manifold is $(Y, t \times \Id_\YY, b_Y)$, where
$b_Y \colon \nu_\YY \to \nu_\XX \times \nu_t$ is the canonical bundle map defined by the bundle
isomorphism $\nu_\YY \cong \nu_t \oplus t^*(\nu_\XX)$.  

For the composition 
$(\Id \wedge t^!) \circ \lambda_\XX$, the homotopy class $\lambda_\XX$ corresponds to the element of 
$\Omega_m(\XX \times \XX; {\rm pr}_1^*(\nu_\XX))$ given by $[\XX, \Delta_\XX, \Id]$; here ${\rm pr_1}$
is the projection to the first factor of $\XX \times \XX$.  Moreover, the map $\Id \wedge t^!$ corresponds to taking 
the transverse inverse image of along $\XX \times \YY \subset \XX \times \XX$ and so maps
$(\XX, \Delta_\XX, \Id)$ to $(Y, t \times \Id_\YY, b_\YY)$.   

For the other composition, we start by noting that $\wt t \circ \rho_\XX = \rho_\YY$ and $\rho_\YY$
is the stable homotopy element defined by the bordism class of $(\YY, \Id, \Id)$ in $\Omega_{m-l}(\YY, \nu_\YY)$.  
Since $\lambda_{\YY, \nu_t}$ is the map of Thom spaces induced by the bundle map $\Delta$ and $b_t \colon t^*(\nu_\XX) \to \nu_\XX$ is the canonical bundle map,
we see that $[\YY, \Id, \Id]$ is mapped to $[Y, t \times \Id_\YY, b_Y]$.

(ii) This the analogue of the bijection in Lemma \ref{lem:normal_invariant_1}.

(iii) This follows immediately from the definition of the duality map $\DD$ and part (i).
\end{proof}

Recall from Section \ref{sec:pinch} that the map $x = t \circ y \colon S^m \to \XX$ 
can be used to define a self-homotopy equivalence
$p(t \circ y) \colon \XX \simeq \XX$, the pinch map on $x$.

\begin{lemma} \label{lem:pinch}
There is a bundle map $b \colon \nu_\XX \to \nu_\XX$ covering $p(t \circ y) \colon \XX  \simeq \XX $ such that 
\[ \Nt ([\XX , p(t \circ y), b]) = [1] \ast  \DD(\rho_x) = [1] \ast \bigl(\DD(\rho_y) \circ t^{!} \bigr) \in [\XX, SG]. \]
\end{lemma}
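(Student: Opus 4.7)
The strategy is to recognize $(\XX, p(t \circ y), b)$ as the sum of the identity normal map $[\XX, \Id, \Id]$ and the degree-zero normal map $[S^m, x, b_x]$ in the bordism group $\Omega_m(\XX, \nu_\XX)$, and then combine Lemma \ref{lem:normal_invariant_2} with Lemma \ref{lem:Tnut}(iii).

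First I construct the bundle map $b$. The pinch map factors as $\XX \to \XX \vee S^m \xrightarrow{\id \vee x} \XX$, where the first arrow collapses the boundary of a small embedded disk $D \subset \XX$. The bundle $(\id \vee x)^*(\nu_\XX)$ is $\nu_\XX$ on the $\XX$-summand and, using the given trivialisation $b_x = b_t \circ b_y \colon \nu_{S^m} \to \nu_\XX$, is identified with the trivial rank-$k$ bundle on the $S^m$-summand. Pulling back along the pinch gives a bundle on $\XX$ which agrees with $\nu_\XX$ outside $D$ and is trivial on $D$; any such identification yields the sought bundle map $b \colon \nu_\XX \to \nu_\XX$ covering $p(t \circ y)$.

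Next, I would establish the identity
\[ \pt([\XX, p(t\circ y), b]) \;=\; \pt([\XX, \Id, \Id]) + \pt([S^m, x, b_x]) \;=\; \rho_\XX + \rho_x \]
in $\pi_{m+k}(T(\nu_\XX))$. To see this, choose the pinch disk $D$ inside a standard embedded disk in $S^{m+k}$. The Thom collapse associated to $p(t \circ y)$ then factors through the pinch map of Thom spaces $T(\nu_\XX) \to T(\nu_\XX) \vee T(\epsilon^k|_{S^m})$ (the second wedge summand being the Thom space of the trivial bundle over the pinched $S^m$), on which $T(b)$ is the identity on the first summand and the map $T(b_x)$ composed with the Thom collapse for the embedded $S^m$ on the second; by definition these contribute $\rho_\XX$ and $\rho_x$ respectively. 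Applying Lemma \ref{lem:normal_invariant_2} with $[P, h, b] := [S^m, x, b_x]$ and using $\DD(\rho_\XX) = [1]$ from Lemma \ref{lem:normal_invariant_1}(ii), we obtain
\[ \Nt([\XX, p(t\circ y), b]) \;=\; [1] \ast \DD(\rho_x), \]
and the second equality of the lemma is immediate from Lemma \ref{lem:Tnut}(iii).

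The main obstacle is making the intermediate Thom-space decomposition rigorous: one must verify that, with a careful choice of tubular neighbourhood and pinch disk, the Pontryagin-Thom construction for the manifold-level pinch map really splits as a wedge sum at the Thom-space level. Once this compatibility between the pinch construction on $\XX$ and its image in $\pi_{m+k}(T(\nu_\XX))$ is in place, the rest is a matter of unwinding definitions and quoting the two lemmas above.
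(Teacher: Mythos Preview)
Your overall strategy is exactly the paper's: show that $[\XX, p(t\circ y), b] = [\XX,\Id,\Id] + [S^m, x, b_x]$ in $\Omega_m(\XX,\nu_\XX)$, then invoke Lemma~\ref{lem:normal_invariant_2} and Lemma~\ref{lem:Tnut}\eqref{it:Dcx_and_Dcy}.

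The one place where the paper is cleaner is the step you flag as the ``main obstacle''.  Rather than analysing the Thom collapse of the pinch map and splitting it at the Thom-space level, the paper works entirely on the bordism side: the disjoint union $(\XX,\Id,\Id)\sqcup(S^m,x,b_x)$ is normally bordant to its connected sum, and the connected sum of normal $(\XX,\nu_\XX)$-manifolds $(\XX,\Id,\Id)\,\sharp\,(S^m,x,b_x)$ \emph{is} $(\XX,p(x),b)$ for some covering bundle map $b$ (one arranges that $x$ maps a disk $D^m\subset S^m$ identically onto a disk $D^m\subset\XX$ and performs $0$-surgery there).  This gives $[\XX,p(x),b]=[\XX,\Id,\Id]+[S^m,x,b_x]$ directly, without ever touching Thom spaces, and the bundle map $b$ comes along for free from the surgery rather than having to be built by hand.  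Your Thom-space decomposition can certainly be made rigorous, but the connected-sum argument sidesteps it entirely.
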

\begin{proof}
Consider the degree one normal map $(\XX, \Id, \Id)  \sqcup (S^m, x, b_x)$. 
The connected sum of normal $(\XX , \nu_\XX )$-manifolds is a well-defined operation which preserves the 
$(\XX , \nu_\XX )$-bordism class.  This is because we
may assume that there are embedded discuss $D^m \subset S^m$ and $D^m \subset \XX$ 
such that $x \colon S^m \to \XX$ maps $D^m$ identically to $D^m$.
Performing zero surgery on $D^m \sqcup D^m \subset \XX \sqcup S^m$ over $(\XX , \nu_\XX )$, i.e.~taking connected sum of normal $(\XX , \nu_\XX )$-manifolds, we see from the definition of a pinch map that
\[  (\XX , \Id, \Id) \sharp (S^m, x, b_x) = (\XX , p(x), b), \]
where $b \colon \nu_\XX  \to \nu_\XX $ is some bundle map covering $p(x)$.  It follows that
\[ [\XX , p(x), b] = [\XX , \Id, \Id] + [S^m, x, b_x] \in \Omega_m(\XX , \nu_\XX )_1 .\]
Applying Lemma \ref{lem:normal_invariant_2} proves the first equality of the lemma.
The second equality follows from Lemma \ref{lem:Tnut} \eqref{it:Dcx_and_Dcy}.
\end{proof}


\section{The proof of Theorem B}
\label{sec:induct}
We first  outline the ingredients involved in the proof of Theorem B, for a given dimension $4k{+}2 = \dim \theta_{j+1} = 2^{j+2} -2$.  Let $n = 2k = \dim \theta_j$.

\begin{enumerate}
\item Let $\xi$ be an admissible PL-bundle of dimension $n+1$ over
$\PP^{n+1}$, as in Definition \ref{def:admissible}. We have $\pi^*(\xi) \cong \tau_{S^{n+1}}$. Let $\WW = D(\xi)$ and $\VV = \bd \WW = S(\xi)$.
\item Suppose that there exists an element  $x_j \in \theta_j$, with $2x_j = 0$ and Kervaire invariant one. By Theorem \ref{thm:B-J-M}, this occurs if and only if 
$[\iota_{n+1}, \iota_{n+1}] = 2\alpha$, for some class
 $\alpha \in  \pi_{2n+1}(S^{n+1})$ such that $x_j = \Sigma(\alpha)$, where $\Sigma\colon \pi_{2n+1}(S^{n+1}) \to \pi^S_n$ is the suspension homomorphism. 
 \item Let $\pa\colon \VV \to \VV$ denote the pinch map defined in 
 Definition \ref{def:p_alpha}.
\end{enumerate}

The main result to be proven in this section is the following:

\begin{theorem} \label{thm:M} Suppose that $[\iota_{n+1}, \iota_{n+1}] = 2\alpha$, for some  $\alpha \in  \pi_{2n+1}(S^{n+1})$, with $\Sigma(\alpha) = x_j \in \theta_j$.  Then the pinch map $\pa $ is homotopic to a PL-homeomorphism $g\colon  \VV  \cong \VV $.
\end{theorem}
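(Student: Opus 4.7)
The plan is to treat $\pa$ as defining a class $[\VV, \pa, b]$ in the PL structure set $\CSS(\VV)$ and to show it equals the base point $[\VV,\Id,\Id]$, after which the $s$-cobordism theorem will upgrade this equality to a homotopy from $\pa$ to a PL-homeomorphism.

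First I would apply Lemma \ref{lem:pinch} to compute the tangential normal invariant $\Nt(\pa)\in [\VV,SG]$.  Since $\pa = p(s\circ \pi\circ\alpha)$ factors through the embedded section $s\colon \PP^{n+1}\hookrightarrow \VV$ as $x = s\circ(\pi\circ\alpha)$, the lemma gives
\[ \Nt(\pa) \;=\; [1]\ast\bigl(\DD(\rho_{\pi\circ\alpha})\circ s^{!}\bigr)\in [\VV,SG], \]
where $s^{!}\colon \VV\to T(\nu_s)$ is the collapse map of $s$ and $\rho_{\pi\circ\alpha}$ is the stable homotopy class associated to the framed map $\pi\circ\alpha\colon S^{2n+1}\to \PP^{n+1}$ (Lemma \ref{lem:pa_is_tangential} guarantees the bundle data is tangential).

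Next I would push $\Nt(\pa)$ through the composition $SG\to G/O\to G/PL$ and invoke the analysis of Brumfiel-Madsen-Milgram (as foreshadowed in the introduction) to conclude that the image of $\Nt(\pa)$ in $[\VV,G/PL]$ vanishes.  The key input here is that halving the Whitehead square in an Arf-invariant dimension produces, after $S$-dualising the pinch map, a stable class in $[\VV,SG]$ whose $G/PL$-shadow is detected by the vanishing of a specific mod-$2$ cohomology operation on $\PP^{n+1}$; the hypothesis $\Sigma\alpha = x_j \in \theta_j$ together with BMM makes this operation trivial.  Consequently $(\VV,\pa,b)$ is PL-normally cobordant to $(\VV,\Id,\Id)$.

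The third step is to verify that the surgery obstruction of this normal cobordism in $L_{2n+2}(\bZ[\bZ/2])$ vanishes.  By Wall's computation, that group splits as an ordinary Arf contribution on the $\La$-free summand of the surgery kernel plus a piece detected on the fixed summands; the Arf-invariant-one hypothesis on $x_j$, together with our choice of bundle lift $b$ coming from an admissible $\xi$, lets us arrange the free Arf contribution to be zero (as in the proof of Theorem \ref{thm:Z2-action}), while the remaining summand is handled by choosing the cobordism's framing to be compatible with the halving of $[\iota_{n+1},\iota_{n+1}]$.

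Finally, vanishing of both the normal invariant in $[\VV,G/PL]$ and the surgery obstruction gives $[\VV,\pa,b] = [\VV,\Id,\Id]$ in $\CSS(\VV)$.  Since $\dim \VV = 2n+1\geq 5$ in all Arf-invariant dimensions under consideration, the $s$-cobordism theorem PL-trivialises the realising $s$-cobordism; comparing the simple homotopy equivalence to $\VV\times I$ with the product PL-structure then produces a PL-homeomorphism $g\colon \VV\to \VV$ with $g\simeq \pa$, as required.  The main obstacle is the second step: extracting the $G/PL$-vanishing of $\Nt(\pa)$ from the BMM framework requires a careful identification of $\DD(\rho_{\pi\circ\alpha})\circ s^{!}$ with an explicit element of $[\VV,SG]$ whose image in $[\VV,G/PL]$ one can evaluate, and this is where most of the real work lies.
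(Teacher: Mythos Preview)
Your overall strategy matches the paper's: reduce to showing $[\VV,\pa,b]=[\VV,\Id,\Id]$ in $\cS_{PL}(\VV)$, compute the tangential normal invariant via Lemma~\ref{lem:pinch} so that it factors through $s^{!}\colon \VV\to T(\nu_s)$, and then use the Brumfiel--Madsen--Milgram description of $i^*\colon H^*(G/PL;\bZ/2)\to H^*(SG;\bZ/2)$ to kill the image in $[\VV,G/PL]$.  However, two points are off.

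\medskip
\textbf{Step 3 is both unnecessary and incorrect as proposed.}  Once $\Nv(\pa)=0$, the paper does \emph{not} compute a relative surgery obstruction.  Instead it observes that the inclusion $L_{2n+2}(\bZ)\to L_{2n+2}(\bZ[\bZ/2],+)$ is an isomorphism (Wall \S13A), and that the $L$-groups of $\bZ$ act trivially on any PL structure set.  Hence $\cS_{PL}(\VV)\hookrightarrow \cN_{PL}(\VV)$, and vanishing of the normal invariant already forces $[\VV,\pa,b]=[\VV,\Id,\Id]$.  Your proposed mechanism --- invoking the Arf-invariant-one hypothesis on $x_j$ and a careful choice of $b$ to ``arrange'' the free Arf contribution to vanish --- is not how this works: neither of those inputs controls the obstruction of a given normal cobordism, and in any case no such control is needed.

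\medskip
\textbf{Step 2: you have misidentified the reason the normal invariant vanishes.}  The hypothesis $\Sigma(\alpha)=x_j\in\theta_j$ plays no role in showing $\Nv(\pa)=0$; it is used only later (Theorem~\ref{thm:homotopy_type}) to identify the homotopy type of the universal cover.  The paper's actual computation runs as follows.  Since $T(\nu_s)$ is $(n{-}1)$-connected, $\psi:=i\circ([1]\ast)\circ\DD(\rho_y)$ lifts to $G/PL\an{n}$, which $2$-locally is a product of Eilenberg--Mac\,Lane spaces indexed by the classes $\kappa_{2a}$.  By BMM (Theorem~\ref{thm:B-M-M}) only $\psi^*(\kappa_n)$ and $\psi^*(\kappa_{n+2})$ can be nonzero.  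The latter dies because $i^*(\kappa_{n+2})=i^*(\kappa_2)^{\,j+1}$ and cup products vanish in the $(n{-}1)$-connected space $T(\nu_s)$.  The former is handled by the surgery formula: one makes $\pa$ transverse to a fibre sphere $S^n\subset\VV$, and because $\pi\colon S^{n+1}\to\PP^{n+1}$ is a \emph{double} cover, the transverse preimage contributes $2\cdot\sigma_2([\alpha^{-1}(v_0),c_0,F_0])=0\in\bZ/2$ regardless of the Arf invariant of $\alpha^{-1}(v_0)$.  This doubling from $\pi$ --- not any property of $x_j$ --- is the crux of the vanishing argument you should be aiming for.
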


We obtain the involutions of Theorem B by constructing their quotients.
These quotients are PL-twisted doubles,
\eqncount
\begin{equation}\label{eq:M}
M: = M(\xi, \alpha, g) = D(\xi) \cup_g D(\xi),
\end{equation}
where $g$ is a PL-homeomorphism provided by Theorem \ref{thm:M}
and $\xi$ and $\alpha$ are as above.
We must of course identify the universal cover of $M$ and for this we have.

\begin{proposition} \label{prop:MK=tildeM}
The closed PL-manifold $\MM =\MM(\xi, \alpha, g)$ has universal covering PL-homeomorphic to $\MK^{4k{+}2}$, with an involution of type $\xi$.
\end{proposition}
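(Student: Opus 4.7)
The plan is to identify the universal cover $\tilde M$ as a twisted double of $D(\tau_{S^{2k+1}})$, show it is homotopy equivalent to $\MK^{4k+2}$ using Theorem \ref{thm:homotopy_type}, and then upgrade this homotopy equivalence to a PL-homeomorphism using the classification of highly connected PL manifolds.

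The first step is to compute $\pi_1(\MM)$. Since $g \colon \VV \to \VV$ is a PL-homeomorphism of $\VV = S(\xi)$, it acts as the identity on $\pi_1(\VV) = \bZ/2$, and van Kampen applied to $\MM = \WW \cup_g \WW$ gives $\pi_1(\MM) = \bZ/2$ with both inclusions $\WW \hookrightarrow \MM$ inducing isomorphisms on $\pi_1$. Consequently the preimage of each $\WW$ in the universal cover $\tilde \MM$ is the connected double cover $\tilde \WW = D(\pi^*\xi)$, which by admissibility of $\xi$ is PL-homeomorphic to $D(\tau_{S^{2k+1}})$. Hence $\tilde \MM$ decomposes as
\[ \tilde \MM = D(\tau_{S^{2k+1}}) \cup_{\tilde g} D(\tau_{S^{2k+1}}), \]
where $\tilde g \colon S(\tau_{S^{2k+1}}) \to S(\tau_{S^{2k+1}})$ is a lift of $g$.

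Since $g$ is homotopic to $\pa$, the lift $\tilde g$ is homotopic to a lift of $\pa$. Lemma \ref{lem:ptop} identifies this lift with the pinch map $\pw$, and Lemma \ref{lem:p_w} then yields
\[ \tilde \MM \simeq D(\tau_{S^{2k+1}}) \cup_{\pw} D(\tau_{S^{2k+1}}) \simeq \MK^{4k+2}. \]
The principal technical point is to upgrade this homotopy equivalence to a PL-homeomorphism. Both $\tilde \MM$ and $\MK^{4k+2}$ are closed, $2k$-connected PL manifolds of dimension $4k+2 \geq 6$ with $H_{2k+1} \cong \bZ^2$, the same hyperbolic intersection form, and the same (nonzero) Arf invariant, the last being a homotopy invariant in this highly connected setting because it is read off the attaching map of the top cell of the CW model $(S^{2k+1} \vee S^{2k+1}) \cup_\varphi D^{4k+2}$. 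By Wall's classification of $(n{-}1)$-connected $2n$-manifolds in the PL category, together with the triviality of PL homotopy $(4k+2)$-spheres for $4k+2 \geq 5$, any two such manifolds are PL-homeomorphic. I expect this classification input to be the main obstacle; an alternative tactic would be to show directly that the normal invariant and the surgery obstruction of the homotopy equivalence vanish, and then invoke the $s$-cobordism theorem.

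Finally, the deck transformation of $\tilde \MM \to \MM$ is a free, orientation-preserving PL involution, which via the PL-homeomorphism $\tilde \MM \cong \MK^{4k+2}$ provides the asserted action on $\MK^{4k+2}$. By construction the zero section of either copy of $\WW = D(\xi)$ is an embedding $\PP^{2k+1} \hookrightarrow \MM$ whose normal bundle is $\xi$, and whose lift to $\tilde \MM$ is an embedded $(2k{+}1)$-sphere representing a primitive isotropic class in the hyperbolic intersection form. This is a characteristic embedding in the sense of Definition \ref{def:charembedding}, so the resulting involution on $\MK^{4k+2}$ is of type $\xi$, completing the proposition.
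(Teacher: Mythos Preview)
Your proof is correct and arrives at the same conclusion, but it takes a somewhat different route from the paper in the final step.

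For the homotopy equivalence $\tilde M \simeq \MK^{4k+2}$, you explicitly unwind the double cover as $D(\tau_{S^{2k+1}}) \cup_{\tilde g} D(\tau_{S^{2k+1}})$ and then invoke Lemmas~\ref{lem:p_w} and~\ref{lem:ptop}. The paper simply cites Theorem~\ref{thm:homotopy_type}, whose proof is exactly those two lemmas; so here you are just re-deriving the theorem rather than quoting it.

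The genuine divergence is in upgrading the homotopy equivalence to a PL-homeomorphism. You appeal to Wall's classification of $(n{-}1)$-connected $2n$-manifolds in the PL category, together with the triviality of PL homotopy spheres. The paper instead proves directly (Lemma~\ref{lem:Kervaire_manifold}) that the PL structure set $\cS_{PL}(\MK^{4k+2})$ is a single point, by computing $[\MK^{4k+2}, G/PL] \cong \pi_{4k+2}(G/PL)$ (using $\pi_{2k+1}(G/PL)=0$ and the stable triviality of the attaching map) and observing that the surgery obstruction map to $L_{4k+2}(\bZ)\cong\bZ/2$ is an isomorphism, while $L_{4k+3}(\bZ)=0$. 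This is precisely the ``alternative tactic'' you anticipated. The surgery computation is cleaner and self-contained, avoiding the need to invoke a PL version of Wall's classification and to match up the $\alpha$-invariants; your route works but carries more overhead in verifying the classification hypotheses.

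Your final paragraph on the involution being of type $\xi$ is correct and makes explicit what the paper leaves implicit in the construction.
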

\noindent
To prove Proposition \ref{prop:MK=tildeM} we need the following application of simply-connected surgery. 

\begin{lemma}\label{lem:Kervaire_manifold}
Any homotopy equivalence $f \colon \NN \to \MK^{4k{+}2}$ from a closed PL-manifold $\NN$ to a Kervaire manifold is homotopic
to a PL-homeomorphism.
\end{lemma}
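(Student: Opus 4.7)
The plan is to prove that the PL structure set $\CSS(\MK^{4k+2})$ contains only the class of the identity, from which the PL $s$-cobordism theorem (applicable since $\dim \MK \geq 6$, i.e.\ $k \geq 1$) immediately yields a PL-homeomorphism $\NN \cong \MK$ homotopic to $f$. For $k \geq 1$ the manifold $\MK$ is simply connected, so Wall's surgery exact sequence takes the form
$$L_{4k+3}(\bZ) \longrightarrow \CSS(\MK) \xrightarrow{\Nv} [\MK, G/PL] \xrightarrow{\sigma} L_{4k+2}(\bZ).$$
Since $L_{4k+3}(\bZ) = 0$, the triviality of $\CSS(\MK)$ reduces to showing that $\sigma$ is injective on $[\MK, G/PL]$. (The case $k=0$ concerns closed surfaces and is handled by the surface classification, so I would dispatch it separately.)

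To compute $[\MK, G/PL]$ I would use the cell decomposition $\MK \simeq (S^{2k+1}\vee S^{2k+1}) \cup_\varphi e^{4k+2}$ from \eqref{eq:KM}. Applying $[-,G/PL]$ to the associated Puppe sequence and invoking Sullivan's calculation $\pi_{2j+1}(G/PL) = 0$, $\pi_{4j+2}(G/PL) = \bZ/2$ (detected by the Arf invariant), one obtains the exact sequence
$$\pi_{2k+2}(G/PL)^{\oplus 2} \longrightarrow \pi_{4k+2}(G/PL) \longrightarrow [\MK, G/PL] \longrightarrow 0,$$
so $[\MK, G/PL]$ is a quotient of $\pi_{4k+2}(G/PL) \cong \bZ/2$, and in particular has at most two elements.

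Next I would analyze the composite
$$\pi_{4k+2}(G/PL) \longrightarrow [\MK, G/PL] \xrightarrow{~\sigma~} L_{4k+2}(\bZ) \cong \bZ/2.$$
The surgery obstruction on the generator of $\pi_{4k+2}(G/PL)$ is by definition the Kervaire--Arf invariant and gives an isomorphism onto $L_{4k+2}(\bZ)$. Since the displayed composite is an isomorphism, both of its factors must be isomorphisms; in particular $\sigma$ is injective. Exactness forces $\operatorname{im}(\Nv) = 0$, and combined with $L_{4k+3}(\bZ) = 0$ this yields $|\CSS(\MK)| = 1$, proving the lemma.

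The main point to check carefully is that the nontrivial potential class in the quotient $[\MK, G/PL]$ --- the one supported on the top $(4k+2)$-cell --- really does have nonzero surgery obstruction in $L_{4k+2}(\bZ)$. Equivalently, one must verify that the surgery obstruction of a normal map modification on the top cell is detected by the same Arf invariant that detects $\pi_{4k+2}(G/PL)$. This follows from Sullivan's characteristic-class description of the Arf invariant in simply connected PL surgery, since the Wu-class pairing localizing the obstruction restricts nontrivially to the top cell of $\MK$; this is the essential piece of simply connected surgery input underlying the argument.
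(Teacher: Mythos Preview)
Your argument is correct and follows essentially the same route as the paper: both use the simply connected PL surgery exact sequence, the vanishing of $L_{4k+3}(\bZ)$, the cell structure $\MK \simeq (S^{2k+1}\vee S^{2k+1})\cup_\varphi e^{4k+2}$ together with $\pi_{2k+1}(G/PL)=0$ to control $[\MK,G/PL]$, and the fact that the top-cell surgery obstruction $\pi_{4k+2}(G/PL)\to L_{4k+2}(\bZ)$ is an isomorphism. The only cosmetic difference is that the paper observes directly that the collapse map induces an \emph{isomorphism} $c_{\MK}^*\colon \pi_{4k+2}(G/PL)\cong [\MK,G/PL]$ (using that $\varphi$ is stably trivial, since it is a sum of Whitehead products), whereas you only use surjectivity of $c_{\MK}^*$ and let the composite with $\sigma$ force injectivity; your variant avoids checking that $\Sigma\varphi=0$, which is a small economy but not a genuinely different idea.
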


\begin{proof}
Since $L_{4k+3}(\bZ) = 0$, the PL-surgery exact sequence for $\MK^{4k{+}2}$ runs as follows:
\[  0 \to \cS_{PL}(\MK^{4k{+}2}) \xra{~\Nv~} [\MK^{4k{+}2}, G/PL] \xra{~\sigma~} L_{4k{+}2}(\bZ) \to 0 \]
From Section \ref{sec:pinch} there is a homotopy equivalence 
$\MK^{4k{+}2} \simeq (S^{2k{+}1}_0 \vee S^{2k{+}1}_1) \cup_\varphi D^{4k{+}2}$
where $\varphi \colon S^{4k+1} \to S^{2k{+}1}_0 \vee S^{2k{+}1}_1$ is a stably 
trivial map.  As $\pi_{2k{+}1}(G/PL) = 0$, it follows that the collapse map
$c_{\MK} \colon \MK^{4k{+}2} \to S^{4k{+}2}$ induces an isomorphism 
$c_{\MK}^* \colon \pi_{4k{+}2}(G/PL) \cong [\MK^{4k{+}2}, G/PL]$.  But $\sigma \circ c_{\MK}^*$ 
is an isomorphism and $\Nv$ is injective.
Hence $\cS_{PL}(\MK^{4k{+}2})$ has one element which proves the lemma.
\end{proof}

\begin{proof}[Proof of Proposition \ref{prop:MK=tildeM}]
By Theorem \ref{thm:homotopy_type},
our assumptions on $\xi$ and $\alpha$ 
imply that $\wt\MM$ is homotopy equivalent to $\MK^{4k{+}2}$. 
By Lemma \ref{lem:Kervaire_manifold}, $\wt\MM$ is PL-homeomorphic to $\MK^{4k+2}$.
 \end{proof}

Assuming Theorem \ref{thm:M}, we now have the following result,
which implies Theorem B.

\begin{theorem}\label{thm:thmb_detail}
Suppose that the set $\theta_j$ contains an element of order two, for some $j\geq 0$. 
If $\xi$ is an admissible PL-bundle of dimension $2k{+}1$ over $\PP^{2k{+}1}$, 
with $k = 2^j -1$, then $\MK^{4k{+}2}$ admits a free orientation-preserving (PL) involution of type $\xi$.
\end{theorem}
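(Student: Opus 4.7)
The proof strategy is to apply the surgery exact sequence of \eqref{eq:FSES} to $\VV = S(\xi)$, and show that the tangential normal map $[\VV, \pa, b] \in \CTSS(\VV)$ maps trivially into the ordinary PL structure set $\CSS(\VV)$. Since $\pa$ is a self-homotopy equivalence of $\VV$ (and Lemma \ref{lem:pa_is_tangential} provides a bundle map $b\colon \nu_\VV \to \nu_\VV$ covering it), producing a PL-homeomorphism homotopic to $\pa$ amounts to trivializing the corresponding element in $\CSS(\VV)$. This in turn will follow from (i) showing the image of the normal invariant $\Nt(\pa)\in[\VV,SG]$ in $[\VV,G/PL]$ vanishes, and (ii) the relevant surgery obstruction being zero.

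For step (i), I would first use Lemma \ref{lem:pinch} to compute
\[ \Nt([\VV,\pa,b]) = [1] \ast \bigl(\DD(\rho_\alpha)\circ (s\circ\pi)^!\bigr) \in [\VV, SG], \]
where $(s\circ\pi)^!\colon \VV\to T(\nu_{s\circ\pi})$ is the collapse map associated to the codimension $(n+1)$ embedding $s\circ\pi\colon S^{n+1}\hookrightarrow \VV$ obtained by composing the $2$-fold cover with the section $s$ of $\xi\to\PP^{n+1}$. Next, using Lemma \ref{lem:transverse} applied to the submanifold $s(\PP^{n+1})\subset\VV$, the image of $\Nt(\pa)$ under $i_*\colon[\VV,SG]\to[\VV,G/PL]$ is controlled by the element of $[\PP^{n+1}, G/PL]$ obtained from $\alpha$, together with the Thom class of the normal bundle of the section (a stable reduction of $\xi$). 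The admissibility condition $\pi^*(\xi)\cong\tau_{S^{n+1}}$ is precisely designed to arrange this normal data correctly.

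The main obstacle, and the hinge of the argument, is showing that this image in $[\VV,G/PL]$ is in fact zero. The key input is the analysis of Brumfiel--Madsen--Milgram \cite{brumfiel-madsen-milgram1} of the composite $SG\to G/O\to G/PL$ at the prime $2$ in the Arf invariant dimensions $4k+2=2^{j+2}-2$. Their results imply that the specific classes in $SG$ arising from maps that halve the Whitehead square (i.e.\ elements $\alpha\in\pi_{2n+1}(S^{n+1})$ with $2\alpha = [\iota_{n+1},\iota_{n+1}]$) lie in the kernel of $SG\to G/PL$; intuitively, the Arf invariant obstruction that keeps these classes nonzero in $SG$ is killed precisely by the PL smoothing theory in these dimensions. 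Combined with the structure of $[\VV,G/PL]$ (analyzed via the cofibration $S^{n+1}\to\PP^{n+1}\to\PP^{n+1}/S^n$ and the bundle projection $\VV\to\PP^{n+1}$), this forces $i_*\Nt(\pa)=0$.

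For step (ii), the surgery obstruction in $L_{2n+1}(\bZ[\pi_1(\VV)])$ is automatically zero because $\pa$ is already a homotopy equivalence (indeed, the obstruction lives in the image of the $L$-group acting on the structure set, and vanishing of the normal invariant plus the degree-one condition trivializes it). By exactness of \eqref{eq:FSES}, we conclude that $[\VV,\pa,b]=0$ in $\CSS(\VV)$, so there is a PL-homeomorphism $g\colon\VV\cong\VV$ with $g\simeq \pa$, establishing Theorem \ref{thm:M}. Combined with Theorem \ref{thm:homotopy_type} and Lemma \ref{lem:Kervaire_manifold}, this yields the desired free involution on $\MK^{4k+2}$ of type $\xi$, proving Theorem \ref{thm:thmb_detail} and hence Theorem B.
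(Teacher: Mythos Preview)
Your overall strategy matches the paper's: reduce to showing the PL normal invariant $\Nv(\pa) \in [\VV, G/PL]$ vanishes, invoke Brumfiel--Madsen--Milgram, then conclude via the surgery exact sequence. However, there are errors and one substantial gap.

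First, the factorization in step (i) is wrong: $s \circ \pi \colon S^{n+1} \to \VV$ is \emph{not} an embedding (it is a $2$-fold immersion onto $s(\PP^{n+1})$), so ``$(s \circ \pi)^!$'' makes no sense. The correct factorization uses the embedding $s \colon \PP^{n+1} \hookrightarrow \VV$, giving $\Nt(\pa)$ as a composite through $s^! \colon \VV \to T(\nu_s)$; here $T(\nu_s)$ is an $(n-1)$-connected $(2n+1)$-complex, which is what makes the next step work.

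Second, and more seriously, the BMM theorem does not by itself imply that ``classes arising from halving the Whitehead square lie in the kernel of $SG \to G/PL$''. Their Corollary~3.4 says $i^*(\kappa_{2a}) = 0$ unless $a = 2^k$ or $2^k-1$, and $i^*(\kappa_{2^{k+1}}) = (i^*\kappa_2)^{2^k}$. Applied to $T(\nu_s)$, this kills all $\psi^*(\kappa_{2a})$ except possibly $\psi^*(\kappa_n)$ and $\psi^*(\kappa_{n+2})$; the latter vanishes by the cup-product formula. But $\psi^*(\kappa_n) \in H^n(T(\nu_s); \bbZ/2) \cong \bbZ/2$ is not handled by BMM and requires a separate geometric argument: one evaluates $\kappa_n$ via the surgery formula \eqref{eq:surgery_formula} by taking the transverse inverse image of $\pa$ along a fibre $S^n \subset \VV$. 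The key point is that $\pa^{-1}(S^n)$ consists of $S^n$ together with \emph{two} diffeomorphic framed copies of $\alpha^{-1}(\text{pt})$, one for each preimage under the double cover $\pi$, so the mod~$2$ surgery obstruction is $2 \cdot \sigma_2(\ldots) = 0$. This doubling, which comes precisely from the factorization of the pinch through $\pi$, is the heart of the proof and is absent from your proposal.

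Third, your step (ii) is confused. The dimension of $\VV$ is $2n+1$, so the group acting on $\cS_{PL}(\VV)$ is $L_{2n+2}(\bbZ[\bbZ/2], +)$, not $L_{2n+1}$. Vanishing of the normal invariant only places $[\VV, \pa, b]$ in the orbit of the identity under this action; you must still argue the action is trivial. The paper does this by noting $L_{2n+2}(\bbZ) \cong L_{2n+2}(\bbZ[\bbZ/2], +)$ and that the $L$-groups of the trivial group act trivially on any PL structure set, whence $\cS_{PL}(\VV)$ injects into $[\VV, G/PL]$.
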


\begin{proof}
Let $M = M(\xi, \alpha, g)$ by the PL-manifold which we recall is the twisted double 
$M = D(\xi) \cup_g D(\xi)$.
By Proposition \ref{prop:MK=tildeM}, there is a PL-homeomorphism 
$f \colon \wt\MM \cong \MK^{4k+2}$.  Hence if $\tau \colon \wt \MM \cong \wt \MM$ 
denotes the non-trivial deck transformation of $\wt \MM \to \MM$,
then the PL-homeomorphism $f^{-1} \circ \tau \circ f \colon \MK^{4k+2} \cong \MK^{4k+2}$ 
is free orientation-preserving PL-involution of type $\xi$
on $\MK^{4k+2}$.
\end{proof}

\begin{remark} Theorem \ref{thm:thmb_detail}
shows that there exist many inequivalent PL-involutions on the Kervaire manifolds, 
just by varying the choice of characteristic bundle $\xi$.
\end{remark}
%
%
\begin{proof}[The proof of Theorem \ref{thm:M}]
It is enough to show that the pinch map $\pa  \colon \VV  \simeq \VV $ is equivalent to 
$\Id \colon \VV  \simeq \VV $ in $\cS_{PL}(\VV )$.
Now $\VV $ is an orientable manifold with $\pi_1(\VV ) = \bbZ/2$, 
and by \cite[\S 13.A]{wall-book} the map $L_{2n{+}2}(\bZ) \to L_{2n{+}2}(\bZ[\bbZ/2],+)$ is an isomorphism.  
Since the $L$-groups of the trivial group act trivially on any PL-structure set, $\cS_{PL}(\VV )$ injects into $\cN_{PL}(\VV )$.  So we must prove that the usual PL-normal invariant
$\varphi : = \Nv(\pa)\colon \VV  \to G/PL$ vanishes.  

By Lemma \ref{lem:pa_is_tangential}, 
there is a bundle map $b \colon \nu_\VV \to \nu_\VV$ covering $\pa$ 
and so by diagram \eqref{eq:FSES}, $\varphi = i_{} \circ \Nt(b)$.
Now from Lemma \ref{lem:pinch}, the normal invariant of $\pa $ factors as follows
\[  \varphi = \psi \circ s^! \colon \VV  \xra{~s^!~}  T(\nu_s) \xra{D(\rho_{b})} \qsna_0 \xra{[1] \ast} SG \xra{~i~} G/PL.  \]
where $\psi : = i \circ ([1]\ast) \circ D(\rho_b)$ and $i \colon SG \to G/PL$ is the canonical map.
As the bundle $\nu_s$ has rank $n$, the Thom space $T(\nu_s)$ is $(n-1)$-connected and so
$\varphi$ vanishes on the $(n-1)$-skeleton of $\VV $.  
It follows that the map $\psi \colon T(\nu_s) \to G/PL$ lifts to a map 
$T(\nu_s) \to G/PL\an{n}$.
 
Because there is an odd-primary equivalence $T(\nu_s)_{(odd)} \simeq S^{2n+1}$, it will be sufficient
to work $2$-locally.  There are isomorphisms
\[ [T(\nu_s), G/PL\an{n}] \cong [T(\nu_s), G/PL\an{n}]_{(2)} \cong [T(\nu_s), G/PL\an{n}_{(2)}]. \]

Turning to the $2$-local situation, by \cite[Lemma 4.7]{madsen-milgram1} there are cohomology classes
$\kappa_{4k{+}2} \in H^{4k{+}2}(G/PL\an{n}; \bbZ/2)$ and 
$\bar \kappa_{4k} \in H^{4k}(G/PL\an{n}; \bbZ_{(2)})$ such that the map 
\[  \prod_{4k{+}2 \geq 6} ( \kappa_{4k{+}2} \times \bar \kappa_{4k+4} ) 
\colon G/PL\an{6} \simeq 
\prod_{4k +2 \geq 6}  K(\bbZ/2, 4k{+}2) \times K(\bbZ_{(2)},4k+4) \]
is a $2$-local homotopy equivalence. 
It follows that $[T(\nu_s), G/PL\an{n}]$ can be expressed as a direct sum of cohomology groups:
\[ [T(\nu_s), G/PL\an{n}_{}]  \cong \bigoplus_{4k{+}2 \geq n} H^{4k{+}2}(T(\nu_s); \bbZ/2) \oplus  H^{4k+4}(T(\nu_s); \bbZ_{(2)}).   \]
Since mod~$2$ reduction $\rho_2 \colon H^{4k+4}(T(\nu_s); \bbZ_{(2)}) \to H^{4k+4}(T(\nu_s); \bbZ/2)$ is an isomorphism it
will suffice to consider the cohomology classes $\kappa_{4k+4} : = \rho_2 \circ \bar \kappa_{4k+4}$ and $\kappa_{4k{+}2}$.

We need to show that $\psi^*(\kappa_{2a})= 0$ for each $a \geq n/2$.  Since $\psi$ factors through the map 
$i \colon SG \to G/PL$,
we can use a deep result of Brumfiel, Madsen and Milgram about the induced map of mod~$2$ cohomology 
$i^* \colon H^*(G/PL; \bbZ/2) \to H^*(SG; \bbZ/2)$.

\begin{theorem}[{\cite[Corollary 3.4]{brumfiel-madsen-milgram1}}] \label{thm:B-M-M}
$i^*(\kappa_{2a}) = 0$ if $a \neq 2^k$ or $2^k -1$ and $i^*(\kappa_{2^k+1}) = i^*(\kappa_2^{2k})$.
\end{theorem}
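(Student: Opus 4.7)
The plan is to combine Sullivan's $2$-local splitting of $G/PL\langle 6\rangle$, which characterizes $\bar\kappa_{4k}$ and $\kappa_{4k+2}$ as the fundamental classes of the Eilenberg-MacLane factors $K(\bbZ_{(2)},4k)$ and $K(\bbZ/2,4k+2)$, with Milgram's description of $H^*(SG;\bbZ/2)$ as a polynomial algebra on admissible iterated Dyer-Lashof operations applied to the fundamental class. The map $i\colon SG\to G/PL$ factors as $SG\subset G\to G/PL$, so evaluating $i^*(\kappa_{2a})$ on a class $\beta\in\pi_{2a}^S=\pi_{2a}(SG)$ computes the surgery obstruction of the degree one normal map $S^{2a}\to S^{2a}$ built from $\beta$.

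For the vanishing statement I would first establish it on the bottom cell of $SG$. In degree $4k+2$ the relevant obstruction is the Arf-Kervaire invariant, which by Browder's theorem \cite{browder1} vanishes on $\pi_{4k+2}^S$ outside the Arf dimensions $4k+2=2^{j+1}-2$, that is, outside $a=2^j-1$. In degree $4k$ the obstruction is the signature divided by $8$, which vanishes on the torsion group $\pi_{4k}^S$ except in certain power-of-$2$ degrees where it is detected by indecomposables related to the image of $J$; these are the $a=2^k$ exceptions. To promote the spherical vanishing to vanishing on all of $H^*(SG;\bbZ/2)$, I would pair $i^*(\kappa_{2a})$ against the basis of $H_*(SG;\bbZ/2)$ given by admissible Dyer-Lashof monomials and induct on their length: since $i$ is an infinite loop map it commutes with the Dyer-Lashof operations, so each pairing reduces to a value on a spherical class handled by the first step.

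For the power formula $i^*(\kappa_{2^{k+1}})=(i^*(\kappa_2))^{2^k}$, the Cartan formula for $\kappa_{2a}$ arising from the $H$-space structure on $G/PL$, together with the Kudo transgression which relates the $2^k$-th power in degree $2^{k+1}$ to the top Dyer-Lashof operation $Q^{2^k}$, identifies both sides with the same admissible Dyer-Lashof monomial $Q^{2^k}Q^{2^{k-1}}\cdots Q^{2}$ applied to the fundamental class in the polynomial presentation of $H^*(SG;\bbZ/2)$.

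The main obstacle is organising this last bookkeeping: the Cartan and multiplicative relations satisfied by the $\kappa_{2a}$ in $H^*(G/PL;\bbZ/2)$ must be matched against the Nishida relations in the Dyer-Lashof algebra, and the restriction from the Postnikov tower of $(G/PL)_{(2)}$ to $SG$ tracked carefully. Moreover, the identification of the $a=2^k$ non-vanishing with the power of $i^*(\kappa_2)$ requires knowing that no lower-filtration Dyer-Lashof generator contributes; handling this exclusion is the technical heart of Sections $2$--$3$ of \cite{brumfiel-madsen-milgram1}.
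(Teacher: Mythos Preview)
The paper does not prove this theorem. It is quoted verbatim as \cite[Corollary 3.4]{brumfiel-madsen-milgram1} and introduced as ``a deep result of Brumfiel, Madsen and Milgram''; the paper simply applies it inside the proof of Theorem \ref{thm:M}. There is therefore no in-paper argument to compare your sketch against.

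As a sketch of the Brumfiel--Madsen--Milgram proof itself, your outline names the right structural ingredients (the Sullivan $2$-local product decomposition of $G/PL\langle 6\rangle$, the Dyer--Lashof description of $H_*(SG;\bbZ/2)$, and the infinite-loop compatibility of $i$), but two of the steps do not work as written. First, your treatment of the $4k$-dimensional classes is off: pairing $i^*(\kappa_{4k})$ with a spherical class in $\pi_{4k}(SG)\cong\pi_{4k}^S$ cannot detect ``signature divided by $8$'', since framed closed $4k$-manifolds have signature zero; the actual content in degree $4k$ is a mod-$2$ phenomenon, not an integral one, and the exceptional values $a=2^k$ arise from the Hopf-invariant-one pattern in $H^*(BSG;\bbZ/2)$ rather than from the image of $J$. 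Second, the induction ``each pairing reduces to a value on a spherical class'' is not valid as stated: dualising a Dyer--Lashof operation on a homology class produces Steenrod operations on the cohomology class via the Nishida relations, and $Sq^I(i^*(\kappa_{2a}))$ has no reason to lie in the span of the $i^*(\kappa_{2b})$. The genuine argument in \cite{brumfiel-madsen-milgram1} does not reduce to spherical evaluation; it computes $i^*$ directly by identifying the images of the $\kappa$-classes in terms of Milgram's explicit polynomial generators for $H^*(SG;\bbZ/2)$, which in turn requires the detailed analysis of $H^*(BSG;\bbZ/2)$ carried out in their Sections~2--3. You effectively concede this in your final paragraph, so what you have is an orientation toward the right machinery rather than a proof.
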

\noindent
Since $T(\nu_s)$ is an $n$-connected $(2n+1)$-dimensional CW-complex, 
the first part of Theorem \ref{thm:B-M-M} implies that the only possible non-zero classes 
$\psi^*(\kappa_{2a}) \in H^*(T(\nu_s); \bbZ/2)$ are 
$\psi^*(\kappa_{n})$ and $\psi^*(\kappa_{n{+}2})$.
But by the second part of Theorem \ref{thm:B-M-M}, $\psi^*(\kappa_{n{+}2}) = (\psi^*(\kappa_2))^{j+1} = 0$.

To show that $\psi^*(\kappa_n) = 0$, we use the surgery-theoretic definition of the $\kappa$-classes.
We give the relevant formula only in the special case we need.
Let $\XX $ be a closed connected $(4k{+}2)$-dimensional PL-manifold with trivial total Wu class, $v(\XX ) = 1 \in H^*(\XX; \bbZ/2)$,
and let $(f, b) \colon \MM \to \XX$ be a degree one normal map 
with normal invariant the map  $\theta \colon \XX  \to G/PL$.  Then by \cite[(2.6)]{brumfiel-madsen-milgram1}, 
\eqncount
\begin{equation} \label{eq:surgery_formula}
\sigma_2([\MM , f, b]) = \an{\theta^*(\kappa_{4k{+}2}), [\XX]},
\end{equation}
where $\sigma_2([\MM, f, b]) \in \bbZ/2$ is the mod~$2$ surgery obstruction of $[\MM, f, b]$.
We shall apply this formula to compute $\psi^*(\kappa_n) \in H^n(T(\nu_s); \bbZ/2) \cong \bbZ/2$.  The generator
of $H^n(T(\nu_s); \bbZ/2)$ is the Thom class of $T(\nu_s)$ which is Poincar\'{e} dual to the fibre $n$-disc of
the bundle $\nu_s$.  It follows that the Poincar\'{e} dual of the pull-back 
$(s^{!})^*\psi^*(\kappa_n) = \varphi^*(\kappa_n)$ is represented by the inclusion of a fibre $f \colon S^n \hookrightarrow \VV $.
By Lemma \ref{lem:transverse} and the diagram \eqref{eq:FSES},  taking the transverse inverse image along $S^n$ 
defines the homomoprhism $\pitchfork_{S^n}$ in 
the following commutative diagram:
\[ \xymatrix{  \cN_{PL}(\VV ) \ar[r]^(0.45){\Nv} \ar[d]^{\pitchfork_{S^n}} &  [\VV, G/PL] \ar[d]^{f^*}\\
 \cN_{PL}(S^n) \ar[r]^(0.425){\Nv} & [S^n, G/PL] .}  \]
We wish to understand $\an{\varphi^*(\kappa_n), f_*[S^n]} = \an{f^*\varphi^*(\kappa_n), [S^n]}$.  
Since $\Nv^{-1}(\varphi) = [\VV , \pa , b]$ and $v(S^n) = 1$, it suffices to compute the surgery obstruction 
\[  \sigma_2(\pitchfork_{S^n}\!\!([\VV , \pa , b])) \in \bbZ/2. \]
Recall that $\pa  = p(s \circ \pi \circ \alpha)$ is the pinch map on the composition
\[ S^{2n+1} \xra{~\alpha~} S^{n+1} \xra{~\pi~} \PP^{n+1} \xra{~s~} \VV . \]
We may assume that $f(S^n)$ is disjoint from the cite of the pinching.  Since $s(\PP^{n+1})$ and $f(S^n)$
meet transversely in a single point $v \in \VV$, it follows that $\pa $ is transverse to $f(S^n) \subset \VV $ with inverse image
\[ \pa ^{-1}(f(S^n)) = f(S^n) \sqcup (s \circ \pi \circ \alpha)^{-1}(v). \]
As $\pi$ is the standard double covering, $\pi^{-1}(v)$ is a pair of antipodal points $v_0 \sqcup v_1 \in S^{n+1}$.  We may assume
 that $\alpha^{-1}(v_0) = \alpha^{-1}(v_1)$
and that $(s \circ \pi \circ \alpha)^{-1}(v) = \alpha^{-1}(v_0) \sqcup \alpha^{-1}(v_1)$ is a disjoint union
of diffeomorphic manifolds $\alpha^{-1}(x_0) \cong \alpha^{-1}(v_1)$ with diffeomorphic framings 
$F_0$ and $F_1$ covering the constant maps $c_i \colon \alpha^{-1}(v_i) \to v \in \VV$, $i = 0,1$.  
It follows that
\[  \sigma_2(\pitchfork_{S^n}\!\!([\VV , \pa , b])) = 2 \sigma_2([\alpha^{-1}(v_0), c_0, F_0]) = 0 .\]
Applying the surgery formula \eqref{eq:surgery_formula} we deduce
that $\an{f^*\varphi^*(\kappa_n), [S^n]} = 0$.  It follows that $\psi^*(\kappa_n) = 0$ and so $[\psi] = 0 \in [T(\nu_s), G/PL]$.
Since $\varphi  = s^! \circ \psi$, we conclude that $\Nv([\VV, \pa , b]) = [\varphi] = 0 \in [\VV , G/PL]$ and we
are done.
\end{proof}

\section{The proof of Theorem C}\label{sec:odd order}
We will now compare free finite group actions on $\MK^{4k{+}2}$ and $S^{2k{+}1} \times S^{2k{+}1}$. Since the Kervaire manifolds usually do not admit free involutions, we will consider odd order group actions.
Recall from Section \ref{sec:pinch} that the homotopy type of both manifolds has the form
$$(S^{2k{+}1} \vee S^{2k{+}1}) \cup D^{4k{+}2},$$
and the attaching maps of the top cell differ only by the addition of the  
Whitehead square $w = [\iota_{2k{+}1}, \iota_{2k{+}1}] \in \pi_{4k+1}(S^{2k{+}1})$. 
The Whitehead square has order two and Hopf invariant zero, so we may construct a degree four map
$$f\colon \MK^{4k{+}2}\to S^{2k{+}1} \times S^{2k{+}1}$$
by starting with a degree two map on each sphere of the wedge $S^{2k{+}1} \vee S^{2k{+}1}$, and then extending by obstruction theory;
see \cite[XI: 1.16,  2.4]{whitehead-book}.

The ``propagation" method of Cappell, Davis, L\"offler and Weinberger (see
\cite{davis-loeffler1}, 
\cite[Theorem 1.6]{davis-weinberger2}) can now be used (in favourable circumstances) to construct free finite group actions on $\MK^{4k{+}2}$ from those on $S^{2k{+}1} \times S^{2k{+}1}$. 

\begin{theorem}\label{thm:odd order}
 Let $(S^{2k{+}1} \times S^{2k{+}1}, \pi)$ denote   a free, PL or smooth, orientation-preserving action of a finite odd order group $\pi$. Then 
\begin{enumerate}
\item In the PL case, there exists a free action $(\MK^{4k{+}2}, \pi)$ and a $\pi$-equivariant map
$f'\simeq f$ which is a $\pi$-equivariant degree four map.
 
\item In the smooth case, the $\pi$-action may be chosen to be smooth on some closed  manifold $N\cong_{PL} \MK^{4k{+}2}$
\end{enumerate}
\end{theorem}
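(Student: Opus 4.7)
The plan is to apply the ``propagation of group actions'' method of Cappell, Davis, L\"offler and Weinberger \cite{davis-loeffler1, davis-weinberger2} to a degree four comparison map between the Kervaire manifold and the product of spheres. The starting observation, coming from the CW descriptions in Section~\ref{sec:pinch}, is that both $\MK^{4k+2}$ and $X := S^{2k+1}\times S^{2k+1}$ arise from $S^{2k+1}_0 \vee S^{2k+1}_1$ by attaching a single $(4k+2)$-cell, and the two attaching maps differ only by $i_0(w) + i_1(w)$, where $w = [\iota_{2k+1},\iota_{2k+1}]$ is the Whitehead square. Since $w$ has order two and any degree $d$ self-map of $S^{2k+1}$ acts on $w$ by multiplication by $d^2$, the wedge of two degree two maps on the $(2k+1)$-skeleton extends (after an obstruction argument on the top cell) to degree four maps
\[
f \colon \MK^{4k+2} \to X, \qquad f^\sharp \colon X \to \MK^{4k+2}
\]
in both directions.

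The arithmetic input is $\gcd(4,|\pi|) = 1$, which holds for any finite group $\pi$ of odd order. Given a free, orientation-preserving PL $\pi$-action on $X$, form the closed PL orbit manifold $Y := X/\pi$. The propagation theorem of \cite{davis-loeffler1} (see also \cite[Theorem 1.6]{davis-weinberger2}) then produces, by a normal cobordism construction over $Y$, a closed PL manifold $\MM^*$ with $\pi_1(\MM^*) \cong \pi$, a PL map $g \colon \MM^* \to Y$, and a lift to the universal covers $\tilde g \colon \tilde \MM^* \to X$ that is $\pi$-equivariant and, as a non-equivariant map, is a homotopy equivalence homotopic to $f$. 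The vanishing of the surgery obstructions is controlled by the degree being a unit at every prime dividing $|\pi|$, together with standard transfer and localization properties of $L$-theory for odd-order group rings. Since $\tilde\MM^* \simeq \MK^{4k+2}$, Lemma~\ref{lem:Kervaire_manifold} gives a PL-homeomorphism $\tilde\MM^* \cong_{PL} \MK^{4k+2}$, and this transports the free $\pi$-action to $\MK^{4k+2}$ together with an equivariant degree four map equivariantly homotopic to $f$, establishing part~(i). The reverse implication is obtained by applying the same argument to $f^\sharp$.

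For part~(ii), in the smoothable dimensions the same propagation argument is run in the smooth category: one obtains a smooth manifold $N$ with a free smooth $\pi$-action equipped with a smooth equivariant map to $X$ homotopic to $f$, and $N \cong_{PL} \MK^{4k+2}$ by Lemma~\ref{lem:Kervaire_manifold}. The main technical obstacle is the verification that the propagation machinery applies in the form we need; both the existence of the surgery data and the vanishing of the $L$-theory obstructions are ultimately reduced, via the invertibility of $4$ in $\bZ_{(p)}$ for $p \mid |\pi|$, to input from the odd-primary $L$-theory of $\bZ\pi$ that is already present in the cited propagation literature.
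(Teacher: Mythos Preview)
Your overall strategy matches the paper's: construct a degree four map using the order-two Whitehead square, then propagate the free odd-order action across it. The paper also runs the argument in both directions to get the ``if and only if'' of Theorem~C.

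The gap is in your handling of the surgery obstruction. You write that ``the vanishing of the $L$-theory obstructions [is] ultimately reduced, via the invertibility of $4$ in $\bZ_{(p)}$ for $p\mid|\pi|$, to input from the odd-primary $L$-theory of $\bZ\pi$.'' This is not correct as stated: the obstruction lives in $L^s_{4k+2}(\bZ\pi)\cong \bZ/2\oplus\widetilde L^s_{4k+2}(\bZ\pi)$, and the $\bZ/2$ summand is the ordinary Arf invariant, which is entirely $2$-primary and is not killed by any odd-primary localization. The propagation literature you cite produces the Poincar\'e complex $Z$ and the existence of a $\cat$-reduction of $\nu_Z$ (this part \emph{is} odd-local, using $\pi_r(G/\cat)\otimes\bQ=0$ for $r=2k+1,4k+2$), but you still have to choose a normal invariant and compute its surgery obstruction. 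The paper does this explicitly: in the PL case one can adjust the lift of $\nu_Z$ so that its transfer to $\MK^{4k+2}$ is the trivial normal invariant (possible because $[\MK^{4k+2},G/PL]\cong\bZ/2$ maps isomorphically to $L_{4k+2}(\bZ)$); since the Arf invariant is detected by transfer to the trivial group, this forces the $\bZ/2$ summand to vanish. The reduced summand $\widetilde L^s_{4k+2}(\bZ\pi)$ is detected by the multi-signature, which vanishes because both domain and range are closed $(4k+2)$-manifolds with zero $\pi$-signature. In the smooth case one cannot pin down the transferred normal invariant so precisely, which is why the conclusion is only $N\cong_{PL}\MK^{4k+2}$ for \emph{some} smooth structure; your sketch does not distinguish the two cases at this level.

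So: keep the architecture, but replace the appeal to ``odd-primary $L$-theory'' with the explicit splitting $L^s_{4k+2}(\bZ\pi)\cong\bZ/2\oplus\widetilde L^s_{4k+2}(\bZ\pi)$, the transfer argument for the Arf summand (with the PL normal-invariant adjustment), and the multi-signature computation for the reduced summand.
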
 
\begin{proof}
We first review the propagation method. Notice that the action of an odd order group induces the identity on homology.  The first step is to construct the homotopy pull-back diagram (where $q = |\pi|$ denotes the order of $\pi$):
$$\xymatrix{\ZZ  \ar[d] \ar[r] & Y{(\frac{1}{q})}\times K(\pi, 1)\ar[d]\cr
\XX{(q)} \ar[r]& Y{(0)}\times K(\pi, 1)}$$
where $\XX = (S^{2k{+}1} \times S^{2k{+}1})/\pi$ is the quotient space of the given free $\pi$-action, $Y= \MK^{4k{+}2}$, and $\XX(q)$,  $Y{(1/q)}$ and $Y(0)$ denotes Sullivan localizations of the spaces at $q$, $1/q$ or rationally (preserving the fundamental group information, as described in Taylor-Williams \cite[\S 1]{taylor-williams3}).
For the map $X(q) \to Y(0) \times K(\pi,1)$, note that 
$X(q)(\frac{1}{q}) \simeq \widetilde X(0)\times K(\pi,1)$,
and the degree four map $f\colon Y \to \widetilde X$ induces a
rational homotopy equivalence $Y(0) \simeq \widetilde X(0)$.

By Davis-L\"offler \cite[Lemma 1.4, Corollary 1.6]{davis-loeffler1}, we may assume that $\ZZ $ is a finite, oriented, simple Poincar\'e complex of dimension $4k{+}2$. In addition, we obtain a (simple) homotopy equivalence 
$$h\colon \MK^{4k{+}2} \to \widetilde \ZZ $$
to the universal covering of $\ZZ $. 
Since $\XX$ and $Y$ are both smooth or PL-manifolds, the local uniqueness of the Spivak normal fibration implies that there is a lifting
$$\xymatrix{&B\cat \ar[d]\cr \ZZ  \ar@{-->}[ur]\ar[r]& BG}$$
of the classifying map of the Spivak normal fibration $\nu_\ZZ $, with $CAT = DIFF$ or $CAT = PL$  depending on whether $\XX$ is smooth or just PL. This depends on the pull-back square, and the observation that $[Y(0),G/CAT] = 0$, since 
$$\pi_{r}(G/O) \otimes \bQ =\pi_{r}(G/PL) \otimes \bQ = 0,$$
for $r = 2k{+}1, 4k{+}2$. We now compare the surgery exact sequences
$$\xymatrix{0\ar[r]& \CSS(\ZZ ) \ar[d]^{tr} \ar[r]& \cN_\cat(\ZZ ) \ar[d]^{tr}\ar[r]& L^s_{4k{+}2}(\bZ\pi)\cong \cy 2 \oplus \widetilde L^s_{4k{+}2}(\bZ\pi)\ar[d]^{tr}\cr
0 \ar[r]& \CSS(\MK^{4k{+}2})\ar[r]& \cN_\cat(\MK^{4k{+}2}) \ar[r]& L^s_{4k{+}2}(\bZ) \cong \cy 2}$$
under the transfer induced by the universal covering $\widetilde \ZZ  \to \ZZ $ and the homotopy equivalence $h\colon \MK^{4k{+}2} \to \widetilde \ZZ $. We have substituted the well-known calculation $L^s_{4k+3}(\bZ \pi) = 0$ for $\pi$ of odd order 
\cite[\S 5.4]{wall-VI}, and claim that the structure set $\CSS(\ZZ ) \neq \emptyset$. 

The ordinary Arf invariant splits off $L^s_{4k{+}2}(\bZ) \cong \cy 2$, and the transfer map on $L$-groups is an isomorphism on this summand (since $\pi$ has odd order). The reduced $L$-group $\widetilde L^s_{4k{+}2}(\bZ\pi)$ is detected by the multi-signature invariant (see \cite[Prop.~12.1]{htaylor2}. 

In the PL case, we can choose a lifting of $\nu_\ZZ $ which agrees with the stable normal bundle of $\MK^{4k{+}2}$ under  the transfer, since
$$\cN_{PL}(\MK^{4k{+}2}) = [\MK^{4k{+}2}, G/PL] = \pi_{4k{+}2}(G/PL) = \cy 2,$$
and the only non-trivial normal invariant is mapped isomorphically
 to $L^s_{4k{+}2}(\bZ)\cong \cy 2$. In the smooth case, we can choose
 any smooth normal invariant $\alpha \in \cN_{DIFF}(\ZZ )$
such that the surgery obstruction of $tr(\alpha)$ is zero. In this case, the normal invariants
$$\cN_{DIFF}(\MK^{4k{+}2}) = [\MK^{4k{+}2}, G/O] = 
\pi_{2k{+}1}(G/O) \oplus  \pi_{2k{+}1}(G/O)  \oplus  \pi_{4k{+}2}(G/O)$$
are much more complicated, and any element $\beta =tr(\alpha) \in \cN_{DIFF}(\MK^{4k{+}2})$ with surgery obstruction zero will produce a possibly different smooth Kervaire manifold homotopy equivalent to $\MK^{4k{+}2}$. 

Next we observe that if $\alpha \in \cN_\cat(\ZZ )$ is chosen so that $\beta=tr(\alpha)$ has trivial Arf invariant, 
then its surgery obstruction in $\widetilde L^s_{4k{+}2}(\bZ\pi)$ will be determined by the difference of multi-signatures
$$\sign_\pi(\NN ) - \sign_\pi(\ZZ )$$
in domain and range of a degree one normal map $\NN  \to \ZZ $ with normal invariant  $\alpha$ (see \cite[\S 13B]{wall-book}). Since $\NN $ is a closed PL or smooth manifold of dimension $4k{+}2$,  it has $\sign_\pi(\NN ) = 0$, and $\sign_\pi(\ZZ ) = 0$ since $\widetilde \ZZ  \simeq \MK^{4k{+}2}$. Therefore, there exists a smooth or PL-manifold $\NN  \simeq \ZZ $, whose universal covering $(\widetilde \NN , \pi)$ provides a free smooth or PL-action of $\pi$ on a Kervaire manifold $\MK^{4k{+}2}$. 
\end{proof}

\begin{remark} The roles of $\MK^{4k{+}2}$ and $S^{2k{+}1} \times S^{2k{+}1}$ can be reversed in this argument. This proves the other direction of Theorem C, so we conclude that the same odd order finite groups act freely on both manifolds.
\end{remark}

\section{Twisted doubles and the Spivak Normal Fibration}
\label{sec:spivak}

The main result of this section is a general result (see Proposition \ref{prop:Spivak}) about the Spivak normal fibration of a twisted double, or ``two patch space'' in the sense of Jones \cite{jones73}.  The statement is very natural, but we could not find it in the literature and so we give a proof. It will be used
in Section \ref{sec:smoothing_obstruction} for the proof of Theorem D.

Consider the following general situation: let $Q$ be
a compact, smooth oriented manifold with boundary $P$, and let $h \colon P \to P$ be an orientation-preserving homotopy equivalence which preserves the normal bundle of $P$: 
$h^*( \nu_P) \cong \nu_P$.  We form the Poincar\'e duality space
\[  \ZZ  : = Q \cup_h Q \]
 by gluing two copies of $Q$  together along $h$: this is a twisted double.
The Spivak normal fibration of $\ZZ $ may be identified with its classifying map,
\[  \nu_\ZZ  \colon \ZZ  \to BG, \]
and $\nu_\ZZ $ has a vector bundle reduction if and only if $B(i) \circ \nu_\ZZ  \colon \ZZ  \to BG \to B(G/O)$ is 
null-homotopic, where $B(i) \colon BG \to B(G/O)$ is the canonical map.
Since $B(G/O)$ is an infinite loop space \cite{boardman-vogt68}, it defines a generalised cohomology theory and we may 
consider the Mayer-Vietoris
sequence for $[\ZZ , B(G/O)]$ associated to the decomposition $\ZZ  = Q \cup_h Q$.  
The boundary map in this sequence is a homomorphism
\[ \delta_Z \colon [P, G/O] \to [Z, B(G/O)] .\]

\begin{proposition} \label{prop:Spivak}
Let $\Nv(h) \in [P, G/O]$ be the normal invariant of $h \colon P \simeq P$.  Then
\[  [B(i) \circ \nu_\ZZ ] = \pm \delta_\ZZ(\Nv(h)) \in [\ZZ , B(G/O)] . \]
\end{proposition}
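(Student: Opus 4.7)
My plan is to prove the proposition by exploiting the Mayer-Vietoris sequence for the generalised cohomology theory represented by the infinite loop space $B(G/O)$. The decomposition $\ZZ = Q \cup_h Q$, with the two copies of $Q$ glued along $P$ via $h$, produces the long exact sequence
\[
\cdots \to [\ZZ,B(G/O)] \to [Q,B(G/O)]\oplus [Q,B(G/O)] \to [P,B(G/O)] \xra{\;\;} \cdots
\]
in which the boundary homomorphism is precisely
\[
\delta_\ZZ \colon [P,G/O] = [P,\Omega B(G/O)] \longrightarrow [\ZZ,B(G/O)].
\]
The first step is to observe that the class $[B(i)\circ \nu_\ZZ]$ is in the image of $\delta_\ZZ$. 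This is because the restriction of $\nu_\ZZ$ to either copy of $Q$ agrees (as a stable spherical fibration) with the Spivak normal fibration $\nu_Q$ of $Q$, which admits a canonical vector bundle reduction since $Q$ is smooth. Hence $B(i)\circ \nu_\ZZ$ becomes null-homotopic after restricting to either piece, and exactness of Mayer-Vietoris produces the desired preimage.

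The second step is to identify the preimage. By construction of the Mayer-Vietoris boundary, $\delta_\ZZ^{-1}([B(i)\circ\nu_\ZZ])$ is represented by the difference of the two chosen null-homotopies of $B(i)\circ \nu_\ZZ |_Q$ when restricted to $P$. Under the standard identification $G/O \simeq \mathrm{fib}(BO \to BG)$, a homotopy from $B(i)\circ \nu_Q$ to the constant map is the same data as a vector bundle lift of the stable spherical fibration $\nu_Q$, and on each copy of $Q$ the canonical such lift is given by the smooth stable normal bundle $\nu_Q$ itself. On the gluing boundary $P$, the lift coming from the first copy of $Q$ is the standard $\nu_P$, while the lift coming from the second copy is $h^*(\nu_P)$, where the identification $\nu_\ZZ |_P \cong h^*(\nu_P)$ is the one encoded by the gluing homotopy equivalence $h$. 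Thus the Mayer-Vietoris difference class in $[P,G/O]$ classifies the pair $(\nu_P, h^*\nu_P)$ together with the stable fibre homotopy equivalence between their spherical fibrations induced by $h$.

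The third step is to match this class with $\Nv(h)$. By the description of the tangential/classical normal invariant recalled in Section~\ref{sec:tangential_surgery} (or equivalently by \cite[\S 2]{madsen-taylor-williams1}), the normal invariant of the self-homotopy equivalence $h\colon P\simeq P$ is exactly the element of $[P,G/O]$ classifying the virtual difference $\nu_P - h^*(\nu_P)$ together with the stable fibre homotopy trivialisation supplied by $h$. This is precisely the class produced in the second step, so the two agree up to the usual sign ambiguity in the ordering of the two copies of $Q$ in the Mayer-Vietoris sequence.

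The main obstacle, and the one deserving the most care in the actual write-up, is step two: translating the abstract Mayer-Vietoris boundary into concrete data about vector bundle reductions of $\nu_\ZZ|_P$, and then translating that data into the $G/O$ description of $\Nv(h)$. The sign in $\pm\delta_\ZZ(\Nv(h))$ is simply a matter of fixing orientation conventions (which copy of $Q$ is $Q_+$ and which is $Q_-$), and will drop out of the same bookkeeping.
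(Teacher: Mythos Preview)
Your outline is correct and follows essentially the same route as the paper: both arguments hinge on recognising that $\nu_\ZZ$ is obtained by clutching the vector bundles $\nu_Q$ on the two copies of $Q$ along a fibre homotopy self-equivalence of $\nu_P$, and that this clutching datum, read off in $[P,G/O]$, is $\pm\Nv(h)$. The paper simply makes your ``step two'' explicit rather than leaving it as the acknowledged obstacle. Concretely, the paper first builds an explicit model $\xi=\xi_b\cup_{e_b^{-1}}\nu_Q$ for $\nu_\ZZ$ (Lemma~\ref{lem:Spivak_of_Z}), using Wall's uniqueness of the Spivak fibration via spherical reductions to pin down the clutching equivalence $e_b$; it then adds a global vector bundle $\Upsilon$ so that $\xi\oplus\Upsilon$ becomes a clutching of \emph{trivial} bundles by $e=\gamma^{-1}(e_b^{-1})$, and finally observes that the resulting factorisation $\ZZ\to\Sigma P\to B(G/O)$ is, up to sign, the Mayer--Vietoris boundary applied to $i\circ e=-\Nv(h)$.

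Your Mayer--Vietoris phrasing is the natural abstract packaging of this, and your identification of the preimage as ``difference of null-homotopies'' is exactly the paper's ``add $\Upsilon$'' step. The one point where your sketch is genuinely thinner than the paper is the assertion that ``the identification $\nu_\ZZ|_P\cong h^*(\nu_P)$ is the one encoded by $h$'': making this precise is the whole content of Lemma~\ref{lem:Spivak_of_Z}, which shows that the relevant fibre homotopy equivalence is $e_b$, characterised by $(e_b)_*(\rho_P)=T(b)_*(\rho_P)$, and then Lemma~\ref{lem:normal_invariants_agree} matches $\gamma^{-1}(e_b)$ with the tangential normal invariant. Without this, your step three appeals to a description of $\Nv(h)$ which is correct but whose agreement with the clutching function has not been established. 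In short: your plan is the paper's plan; the paper's added value is carrying out the identification you flag as the main obstacle.
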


The proof of Proposition \ref{prop:Spivak} relies on foundational results about the Spivak normal fibrations of Poincar\'{e}
complexes which we now recall.
Let $(Y, \del Y)$ be an oriented Poincar\'{e} pair of formal dimension $m$ as defined in \cite{wall-pc1}.
The Spivak normal fibration of $Y$ is the unique
spherical fibration over $Y$ such that there is a homotopy class
\[ \rho_Y \in \pi_{m}(T(\nu_Y), T(\nu_{\del Y})) \]
such that $\rho_Y$ maps to the generator of $H_{m+k}(T(\nu_Y),T(\nu_{\del Y}); \bbZ) = \bbZ$
under the Hurewicz homomorphism (see \cite[Theorem A]{spivak67} and 
\cite[Theorem 3.2 and Corollary 3.4]{wall-pc1}). 
We call such a class $\rho_Y$ a {\em spherical reduction} for $\nu_Y$.
If
$ \del \colon \pi_{m+k}(T(\nu_Y), T(\nu_{\del Y})) \to \pi_{m+k-1}(T(\del Y)) $
denotes the boundary homomorphism, then $\del(\rho_Y)$ is a spherical reduction  for $\nu_{\del Y}$.
If $\XX$ is a closed manifold, then there is a canonical spherical reduction $\rho_\XX$ for $\nu_\XX$ 
obtained from embedding $\XX \subset S^{m+k}$.  
In general, a spherical reduction $\rho_Y$ is unique up to equivalence in the following sense.  
Let $\Aut(\nu_Y)$ be the group of homotopy classes of orientation-preserving
stable fibre homotopy equivalences of $\nu_Y$.

\begin{theorem}[{\cite[Theorem 3.5]{wall-pc1}}] \label{thm:Spivak_uniqueness}
The mapping 
$$\Aut(\nu_Y) \to \pi_m(T(\nu_Y), T(\nu_{\del Y})), \quad e \mapsto e_*(\rho_Y), $$
defines a bijection between $\Aut(\nu_Y)$ and $\pi_{m+k}(T(\nu_Y), T(\nu_{\del Y}))_1$.
\end{theorem}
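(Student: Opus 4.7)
The plan is to show the map is simply transitive by identifying both its source and target as torsors over the same group $[Y, SG]$ and then checking equivariance of the correspondence.

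First, I would verify that the map is well-defined: if $e\colon \nu_Y \to \nu_Y$ is an oriented stable fibre self-equivalence, then the induced Thom-space map $T(e)$ preserves the (twisted) Thom class, so by naturality of the Hurewicz homomorphism the class $e_*(\rho_Y)$ still hits the generator of $H_{m+k}(T(\nu_Y), T(\nu_{\partial Y}); \bbZ^w)$; hence $e_*(\rho_Y)$ lies in the subset indexed by $1$.

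Second, I would identify both sides as torsors over $[Y, SG]$. On the source side, this is the usual classification of stable oriented spherical fibration self-equivalences: $\Aut(\nu_Y) \cong [Y, SG]$. On the target side, $\pi_{m+k}(T(\nu_Y), T(\nu_{\partial Y}))$ is an abelian group, and the subset of spherical reductions is a torsor over its degree-zero subgroup (differences of two reductions are zero under Hurewicz). Spanier-Whitehead duality, applied to the $\sw$-dual pair $(T(\nu_Y)/T(\nu_{\partial Y}),\, Y/\partial Y)$ exactly as in Lemma \ref{lem:normal_invariant_1}, identifies $\pi_{m+k}(T(\nu_Y), T(\nu_{\partial Y}))_0$ with $[Y_+, \qsna_0]$, and hence with $[Y, SG]$ via the loop product action of $\qsna_0$ on $\qsna_1 = SG$.

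Third, and this is the key step, I would check that under these two identifications the map $e \mapsto e_*(\rho_Y)$ becomes the identity (or at worst an isomorphism of torsors) of $[Y, SG]$. Concretely, this amounts to showing that the action of $\Aut(\nu_Y)$ on spherical reductions---given by composition of the Thom-space self-equivalence with $\rho_Y$---matches, after $\sw$-dualizing, the loop-product action of $[Y, SG]$ on stable maps $Y_+ \to S^0$ of degree one. This is essentially a naturality statement for the Atiyah/Pontryagin-Thom construction. Once this is established, the map is a morphism of torsors over $[Y, SG]$, hence automatically a bijection.

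The main obstacle will be the third step: matching the two $[Y, SG]$-actions requires careful bookkeeping of the twisted Thom isomorphism for $\nu_Y$ and of the duality pairing, and one must track orientation signs (the same issue is what forces the $\pm$ in the statement of Proposition \ref{prop:Spivak}). Everything else---existence of the reference reduction $\rho_Y$, group structure on $\Aut(\nu_Y)$, and torsor structures on both sides---is either given or a routine consequence of duality.
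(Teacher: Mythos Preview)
The paper does not give its own proof of this statement: Theorem~\ref{thm:Spivak_uniqueness} is simply quoted from Wall \cite[Theorem~3.5]{wall-pc1} and used as a black box. So there is nothing in the paper to compare your argument against directly.

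That said, your outline is the standard one and is essentially correct. One slip: in step~2 you write the $\sw$-dual pair as $(T(\nu_Y)/T(\nu_{\partial Y}),\, Y/\partial Y)$, but the Atiyah dual of $T(\nu_Y)/T(\nu_{\partial Y})$ is $Y_+$, not $Y/\partial Y$ (the dual of $T(\nu_Y)$ itself is $Y/\partial Y$). Your conclusion $[Y_+,\qsna_0]$ is the right target, so this is only a typographical inconsistency, but it is worth getting straight since the two duals get swapped when you pass from absolute to relative Thom spaces.

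Your step~3 is, as you say, where the content lies. It is worth noting that the paper effectively carries out exactly this compatibility check in the closed case in the proof of Lemma~\ref{lem:normal_invariants_agree}, using the commutative diagram from \cite[Corollary~I.4.18]{browder72} that compares the action of $\Aut(\nu_X)$ on Thom spaces with the action of $\Aut(\theta)$ under $\sw$-duality. That diagram is precisely the ``naturality statement for the Atiyah/Pontryagin--Thom construction'' you allude to, and adapting it to the relative case $(Y,\partial Y)$ would complete your argument. So your sketch is sound, and the tools to fill it in are already on display elsewhere in the paper.
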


Theorem \ref{thm:Spivak_uniqueness} leads to an alternative definition of the normal invariant
of a tangential degree one normal map $(f, b) \colon \MM \to\XX$ of closed manifolds as we now explain.  
By Theorem \ref{thm:Spivak_uniqueness} there is the unique homotopy class of fibre homotopy equivalence
$e_b \in \Aut(\nu_\XX)$ such that 
\[ (e_{b})_*(\rho_{\XX}) = \pt([M, f, b]) .\]
%
Moreover, if $\theta$ denotes the trivial stable spherical fibration, then by \cite[I.4.6]{browder72},
for any stable spherical fibration $\xi$ over a space $Y$ there is an isomorphism
\[ \gamma_\xi \colon \Aut(\theta) \to \Aut(\xi), \quad e \mapsto e + \id_{\xi} .\]
%
%
%
We identify $\Aut(\theta) = [Y, SG]$ and define
\eqncount
\begin{equation} \label{eq:normal_invariant1}
\Nt([M, f, b]) = \gamma_{\nu_\XX}^{-1}(e_b) \in [\XX, SG].
\end{equation}

\begin{lemma} [See {\cite[(2.4)]{madsen-taylor-williams1}}] \label{lem:normal_invariants_agree}
The normal invariant $\Nt([M, f, b])$ defined in \eqref{eq:normal_invariant1}
agrees with the normal invariant defined in 
\eqref{eq:normal_invariant}\,of Section \textup{\ref{sec:tangential_surgery}}.
\end{lemma}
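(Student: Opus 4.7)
The plan is to check that for any tangential degree-one normal map $(f,b) \colon M \to \XX$, the element $\gamma_{\nu_\XX}^{-1}(e_b) \in [\XX, SG]$ from \eqref{eq:normal_invariant1} equals $\DD(\pt([M,f,b])) \in [\XX, SG]$ from \eqref{eq:normal_invariant}. With $e_b$ as in Theorem \ref{thm:Spivak_uniqueness}, this amounts to showing that the composite
\[
\Psi \colon \Aut(\nu_\XX) \xrightarrow{e \mapsto e_*(\rho_\XX)} \pi_{m+k}(T(\nu_\XX))_1 \xrightarrow{\DD} [\XX, SG]
\]
is inverse to $\gamma_{\nu_\XX}$.

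First I would verify the base case: by Lemma \ref{lem:normal_invariant_1}(ii), $\Psi(\id_{\nu_\XX}) = \DD(\rho_\XX) = [1]$, and $\gamma_{\nu_\XX}^{-1}(\id_{\nu_\XX}) = [1]$ by construction of $\gamma_{\nu_\XX}$. Since both $\Psi$ and $\gamma_{\nu_\XX}^{-1}$ are bijections, and the action $\alpha \cdot e := \gamma_{\nu_\XX}(\alpha) \circ e$ of $[\XX, SG]$ on $\Aut(\nu_\XX)$ is free and transitive, it suffices to prove that $\Psi$ is equivariant with respect to this action and the loop-product action on $[\XX, SG]$; equivariance of $\gamma_{\nu_\XX}^{-1}$ is tautological.

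The main obstacle is thus the equivariance of $\Psi$: for all $\alpha \in [\XX, SG]$ and $e \in \Aut(\nu_\XX)$,
\[
\DD\bigl((\gamma_{\nu_\XX}(\alpha) \circ e)_*(\rho_\XX)\bigr) = \alpha \ast \DD(e_*(\rho_\XX)).
\]
By naturality of Spanier--Whitehead duality under composition, this reduces to computing the $\SW$-dual of the Thom-space self-map induced by $\gamma_{\nu_\XX}(\alpha) = \alpha \oplus \id_{\nu_\XX}$, and showing its adjoint recovers the loop-product action by $\alpha$. Using the Atiyah duality map recalled in the proof of Lemma \ref{lem:normal_invariant_1}, the $\gamma$-construction stabilises by smashing in a trivial bundle, and smashing with a trivial factor is carried by Atiyah duality to the identity operation on $\XX_+$; therefore the $\SW$-dual self-map of $\XX_+$ is, up to homotopy, the map whose adjoint is precisely $\alpha \colon \XX \to SG$. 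Once this naturality identification is in place, the equivariance and hence the lemma are immediate.
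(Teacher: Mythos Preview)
Your approach is essentially the same as the paper's: both arguments reduce the lemma to the compatibility of the map $\gamma_{\nu_\XX}$ with $\sw$-duality and passage to Thom spaces, and both invoke the base case $\DD(\rho_\XX)=[1]$ from Lemma~\ref{lem:normal_invariant_1}(ii). The paper packages the equivariance step you isolate into a single commutative diagram taken from the proof of \cite[Corollary I.4.18]{browder72}, relating $\Aut(\theta)$, $\Aut(\varepsilon)$, and $\Aut(\nu_\XX)$ across the top to the stable maps $\{T(\varepsilon),S^0\}$ and $\{S^m,T(\nu_\XX)\}$ across the bottom via $T$ and $\DD$; its commutativity (which rests on \cite[Theorem I.4.16]{browder72}) is exactly your claim that the $\sw$-dual of $T(\gamma_{\nu_\XX}(\alpha))$ has adjoint given by the loop-product action of $\alpha$. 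Your final paragraph gestures at this but the phrase ``smashing with a trivial factor is carried by Atiyah duality to the identity operation on $\XX_+$'' is too compressed to stand on its own---to make it rigorous you would end up writing down precisely Browder's diagram, so citing it directly (and noting, as the paper does, that simple connectivity is not used in \cite[I.4.16]{browder72}) is the cleaner route.
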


\begin{proof}
Madsen, Taylor and Williams tell us \cite[p.\,450 above (2.4)]{madsen-taylor-williams1} that the lemma can be
directly checked using the definition of $S$-duality.  However, the authors refer to the book 
\cite{browder72} for the theory of Spivak fibrations, where only simply-connected Poincar\'{e} complexes are 
considered.  We therefore sketch the proof and verify that none of the relevant statements from \cite{browder72}
use the assumption of simple connectivity.

The proof of \cite[Corollary I.4.18]{browder72}, which is Browder's version of Theorem \ref{thm:Spivak_uniqueness},
contains two diagrams which may be joined together to give the following commutative diagram,
\[ \xymatrix{  \Aut(\varepsilon) \ar[d]^T  & \Aut(\theta) \ar[l]_{\gamma'} \ar[r]^{\gamma_{\nu_{\XX}}} & \Aut(\nu_\XX) \ar[d]^T \\
\{ T(\varepsilon), T(\varepsilon) \} \ar[d]^{\DD(\rho_{\XX})_* } \ar@{<->}[rr]^(0.465){\DD} & & \{T(\nu_\XX), T(\nu_\XX) \} \ar[d]^{\rho_{\XX}^*} \\
\{T(\varepsilon), S^0 \} \ar@{<->}[rr]^(0.475){\DD} & & \{S^m, T(\nu_\XX) \} }   \]
where $\epsilon = \nu_\XX \oplus (-\nu_\XX)$ is the trivial bundle, $\gamma'$ is an isomorphism defined analogously
to $\gamma_{\xi}$, $T$ denotes the induced map on the Thom space, $\DD$ denotes $S$-duality, 
$\DD(\rho_{\XX})_*$ and $\rho_{\XX}^*$ are induced by composition with the stable maps
$\rho_{\XX} \colon S^m \to T(\nu_\XX)$ and $\DD(\rho_{\XX}) \colon T(\varepsilon) \to S^0$.
The commutativity of the above diagram relies on \cite[Theorem I.4.16]{browder72} which makes no use of simple-connectivity.  

Note that taking adjoints gives an isomorphism $\Ad \colon \{ T(\varepsilon), S^0 \} \cong [\XX, SG]$
such that the composition $\Ad \circ \DD(\rho_{\XX})_* \circ T \circ \gamma' \colon \Aut(\theta) \to [\XX, SG]$ is the
canonical identification.  Note that $\rho_{\XX}^* \circ T(e_b) = \pt([M, f, b])$ and that $\DD(\pt([M, f, b]))$ is
the tangential normal invariant defined in \eqref{eq:normal_invariant}.  
On the other hand, $\gamma^{-1}(e_b)$ is the tangential normal invariant defined in \eqref{eq:normal_invariant1} 
and the commutativity of the diagram shows that the normal invariants agree.
\end{proof}

We now return to the general setting of Proposition \ref{prop:Spivak}, where
$Z : = Q \cup_h Q$ is a Poincar\'{e} complex obtained by gluing two copies of the smooth 
manifold $Q$ together along a homotopy equivalence $h \colon P \simeq P$,  such that
there is a bundle map $b \colon \nu_P \cong \nu_P$ covering $h$.  
Using a collar of $P \times [0, 1] \subset Q$ of the boundary $P \subset Q$, we regard $Z$ as the space
\[  Z = Q \cup_h (P \times [0, 1]) \cup_{\id_P} Q. \]
We define a stable vector bundle $\xi_{b}$ over the Poincar\'{e} complex 
$R : = Q \cup_{h} (P \times [0, 1])$,
\[ \xi_{b} : = \nu_Q \cup_{b} (\nu_P \times [0, 1]), \]
where we glue $P = \del Q$ to $P \times \{ 0 \} \subset P \times [0, 1]$: observe that 
$\xi_{b}|_{P \times \{ 1 \}} = \nu_P$.
Next recall that the fibre homotopy equivalence
$e_{b} \colon \nu_P \simeq \nu_P$ which is defined by the property that
\[ (e_{b})_*(\rho_P) = \mu_P([P, h, b]) = T(b)_*(\rho_P) \in \pi_{m+k}(T(\nu_P))_1 .\]

\begin{lemma} \label{lem:Spivak_of_Z}
The spherical fibration $\xi : = \xi_{b} \cup_{e_{b}^{-1}} \nu_Q$ obtained by clutching the vector
bundles $\xi_{b}$ and $\nu_Q$ together along the fibre homotopy equivalence $e_b^{-1}$ is 
a model for the Spivak normal fibration of $Z$.
\end{lemma}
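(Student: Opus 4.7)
My plan is to invoke the uniqueness part of Theorem \ref{thm:Spivak_uniqueness} in its absolute form: to identify $\xi$ with $\nu_Z$ it suffices to exhibit a class $\rho_Z \in \pi_{m+1+k}(T(\xi))$ whose Hurewicz image in $H_{m+1+k}(T(\xi); \bbZ^w)$ is the orientation generator of $Z$, where $m+1 = \dim Z$. Writing $Z = R \cup_P Q$ with $\del R = P \times \{1\}$ identified with $\del Q = P$, the clutching construction presents the Thom space as a homotopy pushout
\[ T(\xi) \;=\; T(\xi_b) \;\cup_{T(\nu_P) \stackrel{T(e_b^{-1})}{\to} T(\nu_P)}\; T(\nu_Q), \]
so the construction of $\rho_Z$ reduces to producing relative classes $\rho_R \in \pi_{m+1+k}(T(\xi_b), T(\nu_P))$ and $\rho_Q \in \pi_{m+1+k}(T(\nu_Q), T(\nu_P))$ whose boundaries in $\pi_{m+k}(T(\nu_P))$ satisfy $T(e_b^{-1})_*(\del \rho_R) = \del \rho_Q$.

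First I would take $\rho_Q$ to be the Pontrjagin--Thom class of the Poincar\'e pair $(Q, \del Q)$, with boundary the standard class $\rho_P \in \pi_{m+k}(T(\nu_P))$. For $\rho_R$, I would use the deformation retraction $r \colon R \to Q$ that collapses the cylinder $P \times [0,1]$ (and restricts to $h$ on $\del R$) together with the vector bundle isomorphism $r^{\ast}\nu_Q \cong \xi_b$ determined by $b$ to transport $\rho_Q$ to a relative class in $\pi_{m+1+k}(T(\xi_b), T(\nu_P))$; this uses that $\xi_b$ is an honest vector bundle reduction of the Spivak fibration $\nu_R$. The critical computation is then
\[ \del \rho_R \;=\; T(b)_{\ast}(\rho_P) \;=\; (e_b)_{\ast}(\rho_P), \]
where the second equality is exactly the defining property of $e_b$ recalled just before the lemma. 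This gives $T(e_b^{-1})_{\ast}(\del \rho_R) = \rho_P = \del \rho_Q$, so the two relative classes glue to a class $\rho_Z \in \pi_{m+1+k}(T(\xi))$ whose Hurewicz image patches the fundamental classes of $(R, \del R)$ and $(Q, \del Q)$ into $[Z]$, completing the identification via Theorem \ref{thm:Spivak_uniqueness}.

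The main obstacle will be justifying the boundary formula $\del \rho_R = (e_b)_{\ast}(\rho_P)$. The point is that the natural identification $\xi_b|_{\del R} = \nu_P$ coming from the cylinder factor in the definition $\xi_b = \nu_Q \cup_b (\nu_P \times [0,1])$ differs from the identification induced by the retraction $r$ and the bundle isomorphism $r^{\ast}\nu_Q \cong \xi_b$ precisely by the bundle automorphism $b$ covering $h$. Since $e_b$ is by construction the fibre homotopy equivalence recording the difference between $\rho_P$ and $T(b)_{\ast}\rho_P$, tracking this twist carefully through the Pontrjagin--Thom construction for $R$ is exactly what forces the clutching map in the statement to be $e_b^{-1}$ rather than the identity, and is the content of the lemma.
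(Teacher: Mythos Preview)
Your proposal is correct and follows essentially the same route as the paper: construct a spherical reduction $\rho_Z$ for $\xi$ by gluing relative classes $\rho_R$ and $\rho_Q$, verify the boundary compatibility $T(e_b^{-1})_*(\del\rho_R)=\rho_P=\del\rho_Q$ via the defining property of $e_b$, and conclude by Mayer--Vietoris that $\rho_Z$ hits the generator of $H_{m+1+k}(T(\xi);\bbZ^w)$. The only cosmetic difference is that the paper builds $\rho_R$ directly by gluing $\rho_Q$ to $T(b\times\id_{[0,1]})_*(\rho_{P\times[0,1]})$, whereas you transport $\rho_Q$ through the bundle isomorphism $r^*\nu_Q\cong\xi_b$ induced by the retraction; these produce the same class, and your final paragraph correctly isolates why the boundary picks up the twist $T(b)_*$. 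One small point: the statement you need is not quite Theorem~\ref{thm:Spivak_uniqueness} itself (which concerns the $\Aut(\nu_Y)$-action on reductions) but the existence/uniqueness characterization of $\nu_Y$ recalled just before it, namely \cite[Theorem 3.2 and Corollary 3.4]{wall-pc1}.
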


\begin{proof}
By \cite[Theorem 3.2 and Corollary 3.4]{wall-pc1}, it us enough to find a 
spherical reduction for $\xi$. 
We first identify a spherical reduction $\rho_R$ for $\xi_b = \nu_Q \cup_b(\nu_P \times [0,1])$ 
by gluing the spherical class $\rho_Q$ to the spherical class $T(b \times \id_{[0, 1]})_*(\rho_{P \times [0,1]})$.
Note that by construction $\del(\rho_R) = T(b)_*(\rho_P)$,
and by definition $(e_{b}^{-1})_*(T(b)_*(\rho_P)) = \rho_P$.  
Moreover, in the other copy of $Q$, we have $\del(\rho_Q) = \rho_P$ and 
thus, after choosing a homotopy between
representatives, we may form the homotopy class
\[  \rho_Z : = \rho_R \cup \rho_Q \in \pi_{m+k}(\xi). \]
Since the homotopy classes $\rho_R$ and $\rho_Q$ map to generators of 
$H_{m+k}(T(\nu_R), T(\nu_{P}); \bbZ)$ and
$H_{m+k}(T(\nu_Q), T(\nu_{P}); \bbZ)$ respectively, the Mayer-Vietoris sequence for
the decomposition $T(\xi) = T(\xi_{b}) \cup_{T(e_b^{-1})} T(\nu_Q)$ shows that $\rho_Z$ generates
$H_{m+k}(T(\xi); \bbZ)$.  Hence $\xi$ is a model for the Spivak normal fibration of $Z$.
\end{proof}

\begin{proof}[The proof of Proposition \ref{prop:Spivak}]
Let $\nu_Z \colon Z \to BSG$ also denote the classifying map of $\nu_Z$.  After the preparations above, it remains to identify
 the map $B(i) \circ \nu_Z \colon Z \to B(G/O)$ up to homotopy.
Since there is a fibration sequence
$$BO \longrightarrow BG \xra{B(i)} B(G/O), $$ the homotopy class of $B(i) \circ \nu_Z$ will not altered if we add a stable
vector bundle to $\nu_Z$.  For any stable vector bundle $\gamma$, let $-\gamma$ denote its inverse 
and define the following stable vector bundle over $Z$:
\[  \Upsilon : = (-\xi_{b}) \cup_{\id_{(-\nu_P)}} (-\nu_Q). \]
The sum of spherical fibrations $\xi \oplus \Upsilon$ has a decomposition
\[ \xi \oplus \Upsilon = \bigl(\xi_b \oplus (-\xi_b) \bigr) \cup_{e_{b}^{-1} \oplus \id_{(-\nu_P)}} 
\bigl(\nu_Q \oplus (-\nu_Q)\bigr) \]
and is thus obtained by clutching two trivial bundles together along the fibre homotopy equivalence
\[
e : = (e_{b}^{-1} \oplus \id_{(-\nu_P)}) = \gamma^{-1}(e_{b}^{-1}) \in \Aut(\theta) \cong [P, SG] .
\]
It follows that there is an isomorphism of spherical fibrations 
\[  \xi \oplus \Upsilon \cong c_{\Sigma P}^*(\xi_e), \]
where $c_{\Sigma P} \colon Z \to \Sigma P$ is the map collapsing $Q \sqcup Q \subset Z$ to ${\rm pt} \sqcup {\rm pt}$
and $\xi_e$ is the spherical fibration over $\Sigma P$ obtained by clutching two copies of the trivial
spherical fibration over the cone of $P$ via $e$.

At this point we must briefly digress to discuss May's construction of $BH$, the classifying space of a topological monoid $H$ \cite[Proposition 8.7]{may-book}.  From this constuction we see that there is a canonical map $j^1_H \colon \Sigma H \to BH$ where $\Sigma H$ is the topological realisation of the $1$-simplex of the simplicial space used to define $BH$.  The map $j_H^1$ classifies the canonical principal $H$-fibration over $\Sigma H$ obtained by clutching two copies of the trivial $H$-fibration over the cone of $H$ via the identity map of $H$.

The isomorphism of spherical fibrations $\xi \oplus \Upsilon \cong c_{\Sigma P}^*(\xi_e)$ implies that 
the classifying map $\xi \oplus \Upsilon \colon Z \to BSG$ factors as
\[
 \xi \oplus \Upsilon \colon Z \xra{c_{\Sigma P}} \Sigma P \xra{ \Sigma(e)} \Sigma SG \xra{j_{SG}^1} BSG.   
 \]
It follows that $B(i) \circ \nu_Z = B(i) \circ (\xi \oplus \Upsilon)$ factors as
\[ 
B(i) \circ \nu_Z \colon Z \xra{c_{\Sigma P}} \Sigma P \xra{ \Sigma(i \circ e) } \Sigma (G/O )\xra{j_{G/O}^1} B(G/O). 
\]
Equivalently, $B(i) \circ \nu_Z = c_{\Sigma P}^*((j_{G/O}^1)_*(\Sigma(j \circ e))$.
Now $e = e_{b}^{-1} \oplus \id_{(-\nu_P)} = - \Nt(b)$ is the inverse of the tangential normal invariant of 
$(h, b) \colon P \simeq P$.  Hence 
$i \circ e  = - \eta(h) $ is the inverse of usual normal invariant of $h \colon P \simeq P$.
Finally, the composition 
\[ [P, G/O] \xra{~\Sigma~} [\Sigma P, \Sigma(G/O)] \xra{(j_{G/O}^1)^*} [\Sigma P, B(G/O)] \xra{c_{\Sigma P}^*} [Z, B(G/O)] \]
is, up to sign, the definition of the boundary map $\del_Z \colon [P, G/O] \to [\Sigma P, B(G/O)]$, and so
$[B(i) \circ \nu_Z] = \pm \del_Z(\eta(h))$.  This completes the proof of Proposition  \ref{prop:Spivak}.
\end{proof}

\section{The proof of Theorem D}
 \label{sec:smoothing_obstruction}
In this section we return to the setting of Theorem \ref{thm:M}. 
Recall that $n=\dim \theta_j = 2^{j+1}-2$ and
that $\WW = D(\xi)$ is the disc bundle of an admisable bundle $\xi$.
In this section we suppose that $\xi$ is a \emph{vector bundle}.  
In Theorem \ref{thm:M}
we showed that the pinch map $\pa  \colon \VV  \to \VV$
of Definition \ref{def:p_alpha}
is homotopic to a PL-homeomorphism $ g(\alpha)\colon \VV \to \VV$, whenever $\alpha$ halves the Whitehead square. In other words, $x = \Sigma(\alpha)$ is an element of order two in $\theta_j$. 

 In Proposition \ref{prop:MK=tildeM} we showed that the PL-manifold
\[ \MM : = M(\xi, \alpha, g) =  \WW  \cup_{g(\alpha)} \WW \]
has universal cover PL-homeomorphic to $\MK$. Since $\xi$ is an admissible vector bundle, we have an action of \emph{linear type}.

Now let $\ZZ := W \cup_{p(\alpha)} W$ be the Poincar\'{e} complex underlying the PL-manifold
$M (\xi, \alpha, g)$ constructed in Proposition \ref{prop:MK=tildeM}. Let $\nu_Z$ denote the Spivak normal fibration
of $Z$ and let $\eta$ generate the stable $1$-stem.  

\begin{theorem} \label{thm:Spivak}
Suppose that $w_2(\xi) = 0$. If $[\eta \cdot x_j] \neq 0 \in \Coker(J_{n+1}) = \pi_{n+1}(G/O)$, for some $x_j \in \theta_j$ with  $2x_j=0$,  then $\nu_\ZZ$ does not admit a vector bundle reduction. 
\end{theorem}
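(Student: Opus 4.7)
By Proposition~\ref{prop:Spivak}, the obstruction $[B(i)\circ\nu_\ZZ]\in [\ZZ, B(G/O)]$ to a vector bundle reduction of $\nu_\ZZ$ equals $\pm \delta_\ZZ(\eta(\pa))$, where $\delta_\ZZ$ is the Mayer--Vietoris boundary map associated with the decomposition $\ZZ = \WW \cup_{\pa}\WW$ and $\eta(\pa)\in[\VV, G/O]$ is the smooth normal invariant of the pinch map. The plan is therefore to show $\delta_\ZZ(\eta(\pa)) \neq 0$. First apply Lemma~\ref{lem:pinch} with $t = s\colon\PP^{n+1}\hookrightarrow \VV$ and $y = \pi\circ\alpha$ to obtain the factorization
\[
  \eta(\pa) = i_*\Nt(\pa)\colon \VV \xrightarrow{s^!} T(\nu_s) \xrightarrow{\hat\phi} G/O,
\]
where $\hat\phi$ is the map built via Atiyah duality from the stable element $\pi\circ\alpha\in\pi^S_{2n+1}(\PP^{n+1}_+)$.

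Next analyse the low-dimensional cells of $T(\nu_s)$. Since $\PP^{n+1}$ is orientable (as $n$ is even), $w_1(\xi)=0$; combined with the hypothesis $w_2(\xi)=0$ this gives $w_1(\nu_s)=w_2(\nu_s)=0$, so $\nu_s|_{\PP^1}$ is trivial and the $(n+1)$-skeleton of $T(\nu_s)$ splits as $S^{n+1}\vee S^n$. The key identification is that the restriction of $\hat\phi$ to the $S^{n+1}$-summand represents $\pm[\eta\cdot x_j]\in\pi_{n+1}(G/O)$. This uses the stable splitting $\PP^{n+1}/\PP^{n-1}\simeq S^n\vee S^{n+1}$ (valid for $n$ even) together with the classical fact that the stable attaching map of the top cell of $\PP^{n+1}$ to the $n$-cell is $\eta$; projecting $\pi\circ\alpha$ onto the $S^n$-summand therefore gives $\pm\eta\cdot\alpha$, whose stabilization is $\eta\cdot x_j$. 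Atiyah duality between $T(\nu_s)$ and $T(s^*\nu_\VV)$ then translates this into the claimed description of $\hat\phi|_{S^{n+1}}$.

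Finally, suppose for contradiction that $\delta_\ZZ(\eta(\pa))=0$. By exactness of the Mayer--Vietoris sequence, $\eta(\pa)=\pi_\xi^*(c)-\pa^*\pi_\xi^*(d)$ for some $c,d\in[\PP^{n+1},G/O]$, using the homotopy equivalence $\WW\simeq\PP^{n+1}$. Arranging the pinch site of $\pa$ to be disjoint from $s(\PP^{n+1})$ gives $\pi_\xi\circ\pa\circ s\simeq\Id_{\PP^{n+1}}$, so pulling back via $s\circ\pi\colon S^{n+1}\to\VV$ yields $(s\circ\pi)^*\eta(\pa)=\pi^*(c-d)\in\pi_{n+1}(G/O)$. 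This lies in the image of $\pi^*\colon[\PP^{n+1},G/O]\to\pi_{n+1}(G/O)$, which by an obstruction-theoretic argument (exploiting that $\pi$ has degree two on the top cell) is contained in $\im(J_{n+1})+2\pi_{n+1}(G/O)$, while by the previous paragraph the same pullback equals $\pm[\eta\cdot x_j]$ modulo contributions from lower-dimensional cells. The hypothesis that $[\eta\cdot x_j]$ is a nonzero $2$-torsion class in $\Coker(J_{n+1})$ then yields the required contradiction. The main obstacle is the explicit Atiyah-duality computation that identifies $\hat\phi|_{S^{n+1}}$ with $[\eta\cdot x_j]$: this step requires carefully tracking the stable splitting of $\PP^{n+1}/\PP^{n-1}$ for $n$ even through the Thom space $T(\nu_s)$, which is controlled by the vanishing of $w_1(\nu_s)$ and $w_2(\nu_s)$.
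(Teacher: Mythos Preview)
Your overall strategy matches the paper's: you correctly invoke Proposition~\ref{prop:Spivak} to identify the obstruction with $\pm\delta_\ZZ(\Nv(\pa))$, and you correctly apply Lemma~\ref{lem:pinch} to factor $\Nv(\pa)$ through $s^!\colon \VV\to T(\nu_s)$ and a map $\hat\phi\colon T(\nu_s)\to G/O$.  Your identification of $\hat\phi|_{S^{n+1}}$ with $[\eta\cdot x_j]$ is also essentially the paper's Lemma~\ref{lem:rho_y}, though the paper carries this out on the Atiyah-dual side $T(\zeta)/T(\zeta_{n-1})$ via an explicit $Sq^2$ computation that uses $w_2(\zeta)=0$ rather than appealing to the attaching map of $\PP^{n+1}$ directly (your formulation blurs the distinction between $\PP^{n+2}/\PP^{n-1}$ and $T(\hat\zeta)/T(\zeta_{n-1})$, which only agree \emph{because} $w_1(\zeta)=w_2(\zeta)=0$).

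However, your detection step contains a genuine gap.  You propose to pull back along $s\circ\pi\colon S^{n+1}\to\VV$ and argue that $(s\circ\pi)^*\Nv(\pa)=\pm[\eta\cdot x_j]$.  But $(s\circ\pi)^*\Nv(\pa)=(s^!\circ s\circ\pi)^*\hat\phi$, and the map $s^!\circ s\colon \PP^{n+1}\to T(\nu_s)$ is the zero-section inclusion.  By cellular approximation this factors through the $(n+1)$-skeleton $S^n\vee S^{n+1}$, and the component $\PP^{n+1}\to S^{n+1}$ is a map of some degree $d$; composing with $\pi$ (which has degree $2$) gives a map $S^{n+1}\to S^{n+1}$ of degree $2d$.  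Since $[\eta\cdot x_j]$ has order $2$, the pullback $(s\circ\pi)^*\hat\phi|_{S^{n+1}}=2d\cdot[\eta\cdot x_j]=0$, and since $\hat\phi|_{S^n}=0$ as well, you obtain $(s\circ\pi)^*\Nv(\pa)=0$.  This is perfectly consistent with $\delta_\ZZ(\Nv(\pa))=0$, so no contradiction arises.

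The paper avoids this by detecting along the submanifold $f_n\colon S^n\times S^1\hookrightarrow\VV$ (the sphere bundle restricted to $\PP^1\subset\PP^{n+1}$).  The composite $s^!\circ f_n$ is the collapse $S^n\times S^1\to S^n\vee S^{n+1}$ followed by the inclusion of the bottom two cells of $T(\nu_s)$, and the top cell of $S^n\times S^1$ maps with degree~$1$ onto the $(n+1)$-cell.  One then thickens to $f_{n+1}\colon S^{n+1}\times S^1\hookrightarrow\ZZ$ and uses naturality of the Mayer--Vietoris boundary together with the stable splitting of $S^n\times S^1$.  The essential point is that you need a probe into $\VV$ which hits the $(n+1)$-cell of $T(\nu_s)$ with \emph{odd} degree; the section $s$ composed with the double cover $\pi$ can never do this.
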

Before proving Theorem \ref{thm:Spivak} we verify that its hypotheses are satisfied.  By Corollary
\ref{cor:admisable_smooth_bundles} there are numerous admissible vector bundles $\xi$ over $\PP^{n+1}$ with
$w_2(\xi) = 0$; e.g.\,\,take $\xi = \nu_{\PP^{n+1}}$, the normal bundle of an embedding $\PP^{n+1} \to \bbR^{2n{+}2}$.
For the other hypothesis, we have

\begin{lemma} \label{lem:eta_theta_j}
For $j = 3, 4, 5$ there exist $x_j \in \theta_j$ such that $[\eta \cdot x_j] \neq 0 \in \Coker(J_{2^{j+1}-1})$.
\end{lemma}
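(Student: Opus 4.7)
My plan is a case-by-case verification for $j = 3, 4, 5$ using Adams spectral sequence computations of the $2$-primary stable stems in degrees $15$, $31$, and $63$. In each case I would begin from the fact that a Kervaire invariant one element $x_j \in \theta_j$ is detected by $h_j^2 \in \mathrm{Ext}^{2, 2^{j+1}}_A(\bbF_2, \bbF_2)$, so that the product $\eta \cdot x_j$ is detected by $h_1 h_j^2$ in Adams filtration $3$. The two things to verify are (i) that $h_1 h_j^2$ is a nonzero permanent cycle, and (ii) that the element it detects lies outside the image of $J$ in degree $2^{j+1}-1$. Since any two elements of $\theta_j$ differ by an element of $bP_{2^{j+1}-1} \subset \mathrm{Im}(J)$, the outcome does not depend on the choice of representative, and (because we have the later application to Theorem D in mind) I would choose $x_j$ of order two whenever possible.

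For $j = 3$, I would take $x_3 = \sigma^2 \in \pi^S_{14}$, which is of order two. By Toda's tables, $\eta \sigma^2 \neq 0$ in $\pi^S_{15}$ and is detected precisely by $h_1 h_3^2$ in Adams filtration $3$. On the other hand the $2$-primary part of $\mathrm{Im}(J_{15})$ is cyclic of order $32$, generated by a class of Adams filtration at least $4$ (the $P$-tower beginning at $P h_1$). Hence $\eta\sigma^2$ is nonzero in the associated graded quotient $F^3/F^4$, so $[\eta \cdot \sigma^2] \neq 0$ in $\mathrm{Coker}(J_{15})$.

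For $j = 4$ and $j = 5$, order-two elements $x_j \in \theta_j$ exist by \cite{barratt-jones-mahowald1, barratt-jones-mahowald2} and \cite{lin1, kochman-mahowald1} respectively, and in both cases are detected by $h_j^2$. Verifying (i) and (ii) for the detecting class $h_1 h_j^2$ reduces to inspection of published Adams spectral sequence tables: for $j=4$, one combines \cite{barratt-jones-mahowald1} with the Mahowald--Tangora calculation of $\pi^S_{31}$; for $j=5$, one uses the Kochman--Mahowald calculation of $\pi^S_{63}$ given in \cite{kochman-mahowald1}. A filtration comparison analogous to the case $j=3$ then shows $\eta x_j \notin \mathrm{Im}(J_{2^{j+1}-1})$, since the $2$-primary image of $J$ in these degrees is concentrated in Adams filtration strictly greater than $3$.

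The main obstacle is step (i) for $j = 4$ and particularly $j = 5$: ruling out differentials $d_r(y) = h_1 h_j^2$ in the Adams spectral sequence. This is not conceptually hard but is delicate, and I would handle it by direct citation of the above computational literature rather than reproducing the Adams chart in a narrow range.
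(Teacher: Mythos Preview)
Your approach for $j = 4, 5$ via the Adams spectral sequence---$h_1 h_j^2$ a permanent cycle in filtration $3$, while the $2$-primary image of $J$ in these degrees lives in strictly higher filtration---is essentially the paper's argument and is fine.

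The $j = 3$ case, however, contains a genuine error. You assert that $\eta\sigma^2 \neq 0$ in $\pi_{15}^S$, but in fact $\eta\sigma^2 = 0$; the paper cites \cite[p.~257]{kochman-book2} for this. So the choice $x_3 = \sigma^2$ fails outright. The supporting claim that ``any two elements of $\theta_j$ differ by an element of $bP_{2^{j+1}-1} \subset \mathrm{Im}(J)$'' is also wrong: two elements of $\theta_j$ differ by an element of $\Theta_{2^{j+1}-2}$, not of $bP_{2^{j+1}-1}$. For $j=3$ one has $bP_{15} = 0$ (since $15$ is odd), whereas $\Theta_{14} \cong \bbZ/2$ is generated by $\kappa$; and $\mathrm{Im}(J_{14}) = 0$, so $\kappa \notin \mathrm{Im}(J)$. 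Hence the class of $\eta\cdot x_3$ in $\mathrm{Coker}(J_{15})$ genuinely depends on which $x_3 \in \theta_3 = \{\sigma^2,\, \sigma^2 + \kappa\}$ you pick.

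The paper repairs this by taking $x_3 = \sigma^2 + \kappa$, so that $\eta\cdot x_3 = \eta\kappa$, which is nonzero in $\mathrm{Coker}(J_{15})$ by \cite[p.~189]{toda1}. This requires checking separately that $K(\kappa) = 0$ (so that $\sigma^2 + \kappa$ really lies in $\theta_3$); the paper does so by observing that $\kappa$ desuspends to $\pi_{21}(S^7)$ \cite[Theorem 10.3]{toda1} and invoking \cite{barratt-jones-mahowald3}, which shows the Kervaire invariant vanishes on the image of $\pi_{21}(S^7) \to \pi_{14}^S$.
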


As a consequence of Theorem \ref{thm:Spivak} and Lemma \ref{lem:eta_theta_j}

\begin{corollary} \label{cor:not_smooth}
When $w_2(\xi) = 0$, and $x_j=\Sigma(\alpha_j)$ satisfies $[\eta \cdot x_j] \neq 0 \in \Coker(J_{2^{j+1}-1})$,
the PL-manifolds $\MM(\xi, \alpha_j, g_j)$ are not homotopy equivalent to smooth manifolds.
\end{corollary}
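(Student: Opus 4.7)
The plan is to deduce this corollary as a short formal consequence of Theorem \ref{thm:Spivak}, using two standard facts: the Spivak normal fibration of a Poincar\'e complex is a homotopy invariant, and it admits a vector bundle reduction whenever the complex is homotopy equivalent to a smooth manifold.

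First I would suppose, for contradiction, that the PL manifold $\MM := \MM(\xi, \alpha_j, g_j)$ is homotopy equivalent to a closed smooth manifold $\NN$ via some map $h \colon \MM \simeq \NN$. The Spivak normal fibration $\nu_\MM$ of $\MM$, viewed as an oriented Poincar\'e complex, is then fibre homotopy equivalent to $h^*(\nu_\NN)$, which is the underlying spherical fibration of a stable vector bundle. Hence $\nu_\MM$ admits a vector bundle reduction.

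Next I would transfer this reduction to the Poincar\'e complex $\ZZ := \WW \cup_{p(\alpha_j)} \WW$ appearing in Theorem \ref{thm:Spivak}. By Theorem \ref{thm:M}, the gluing PL-homeomorphism $g_j$ is homotopic to the pinch map $p(\alpha_j) \colon \VV \to \VV$, so the identity on the two copies of $\WW$ extends to a homotopy equivalence $\MM \simeq \ZZ$, under which $\nu_\MM$ corresponds to $\nu_\ZZ$. Therefore $\nu_\ZZ$ would also admit a vector bundle reduction.

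Finally I would invoke Theorem \ref{thm:Spivak}: since $w_2(\xi) = 0$ and $[\eta \cdot x_j] \neq 0$ in $\Coker(J_{2^{j+1}-1})$ by hypothesis, with $x_j = \Sigma(\alpha_j) \in \theta_j$ of order two (as needed for $\alpha_j$ to halve the Whitehead square in Theorem B), the theorem states that $\nu_\ZZ$ does \emph{not} admit a vector bundle reduction, contradicting the previous step. There is essentially no substantial obstacle at this stage: all the real work has already been packaged into Theorem \ref{thm:Spivak}, whose proof is the Mayer--Vietoris calculation based on Proposition \ref{prop:Spivak} together with the formula for the tangential normal invariant of the pinch map given in Lemma \ref{lem:pinch}.
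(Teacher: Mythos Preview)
Your proposal is correct and matches the paper's own treatment: the paper does not give a separate proof of this corollary but presents it as an immediate consequence of Theorem \ref{thm:Spivak} (together with Lemma \ref{lem:eta_theta_j} to verify that the hypotheses are actually satisfied for $j=3,4,5$). Your argument spells out exactly the implicit reasoning---that $\ZZ$ is the Poincar\'e complex underlying $\MM(\xi,\alpha_j,g_j)$ via the homotopy $g_j \simeq p(\alpha_j)$ from Theorem \ref{thm:M}, and that a homotopy equivalence to a smooth manifold would force a vector bundle reduction of $\nu_\ZZ$, contradicting Theorem \ref{thm:Spivak}.
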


\begin{proof}[The proof of Lemma \ref{lem:eta_theta_j}]
For $j = 3$, $\pi_{14}^S \cong \bbZ/2 \oplus \bbZ/2$ with generators $\sigma^2$ and $\kappa$ by \cite[p.\,189]{toda1}.
Since $\sigma^2$ is represented by $(S^7 \times S^7, f_7 \times f_7)$, 
where $f_7 \times f_7$ is the framing of
$S^7$ given by octonionic multiplication, we have $K(\sigma^2) = 1$.
Now \cite[p.\,189]{toda1} also shows that $[\eta \cdot \kappa] \neq 0 \in \Coker(J_{15})$, 
whereas, by \cite[p.\,257]{kochman-book2} $\eta \sigma^2 = 0$.
By \cite[Theorem 10.3]{toda1}, there is a homotopy class $\kappa_7 \in \pi_{21}(S^7)$ which stabilises
to $\kappa$.  
On the other hand by \cite{barratt-jones-mahowald3} the Kervaire invariant vanishes on the image of 
$\pi_{21}(S^7) \to \pi_{14}^S$ and hence $K(\kappa) = 0$.
Thus $x_3 := \kappa + \sigma \in \theta_3$ has $[\eta \cdot x_3] \neq 0 \in \Coker(J_{15})$.

For $j = 4, 5$ we assume that reader is familiar with using the mod $2$ Adams spectral sequence to compute
the $2$-primary part of $\pi_*^S$.  Recall that by \cite[Theorem 7.1]{browder1}, an element $x_j \in \pi_{2^{j+1}-2}$ 
has Kervaire invariant $1$ if and only if it represents $h_j^2$ in the Adams spectral sequence.
Now, for $j = 4, 5$, $h_1h_j^2$ is  a permanent cycle in the Adams spectral sequence with Adams filtration $3$:
see for example \cite[Theorem 8.3.2]{kochman-book2}.
Since multiplication by $h_1$
corresponds to multiplication by $\eta$ and since there are homotopy classes $x_4$ and $x_5$ representing 
$h_4^2$ and $h_5^2$, we may (ambiguously) 
denote such permanent cycles 
representing $h_1h_j^2$ by $\eta \cdot x_j$. 
Since the $2$-primary order of the image of $J_{2^{j+1}-2}$ is at least $2^5$, \cite[Theorem 1.6]{adams1965},.  
It follows that the element of order two in $\textup{Im}(J_{2^{j+1}-2})$ has Adams filtration
greater than $3$ and so $\eta \cdot x_j$ is not in the image of $J_{2^{j+1}-2}$.  In other words,
 $[\eta \cdot x_j] \neq 0 \in \Coker(J_{2^{j+1}-1})$.

For $j = 4$, the lemma also follows from \cite[Table A3.3]{ravenel-greenbook}: we take $x_4 = h_4^2$ and then 
$\eta \cdot x_4 = h_1h_4^2 \neq 0 \in \Coker(J_{31})$: here we use Tangora's names from 
\cite[Table A3.3]{ravenel-greenbook}. 
\end{proof}

We now turn to the proof of the remainder of Theorem \ref{thm:Spivak}. 
We first give an outline of the proof, reducing it to Proposition \ref{prop:Spivak} and a computational Lemma \ref{lem:sni} below.
We shall apply Proposition \ref{prop:Spivak} to the Poincar\'e complex $Z$ underlying $M$,
$$\ZZ = \WW  \cup_{\pa} \WW,$$ 
where for
$\VV = \bd\WW$, $\pa \colon V \simeq V$ is a tangential homotopy equivalence. 
%
  
Let $S^1 = \PP^1 \subset \PP^{n+1}$.
Since the bundle $\xi$ is orientable, its restriction to $S^{1} \subset \PP^{n+1}$ is trivial.  
Let $f_n \colon S^n \times S^1 \to \VV $ be the inclusion of this total space.
Since $\pa $ is the identity on $f_n(S^n \times S^{1}) \subset \VV $, there is a commutative diagram 
\eqncount
\begin{equation} \label{eq:MV}
\vcenter{ \xymatrix{ D^{n+1} \times S^1 \ar[d] & S^n \times S^1  \ar[d] \ar[l] \ar[r]  & D^{n+1} \times S^1 \ar[d] \\
\WW   & \VV  \ar[l] \ar[r] & \WW   } }
\end{equation}
which gives rise to an inclusion $f_{n+1} \colon S^{n+1} \times S^1 \to \ZZ$.    
From Proposition \ref{prop:Spivak} and diagram \eqref{eq:MV} we deduce that
\[ f_{n+1}^* ( [B(i) \circ  \nu_{\ZZ}] )= \delta_{S^{n+1} \times S^1}\bigl( f_n^*(\Nv(\pa)) \bigr) 
\in [S^{n+1} \times S^1, B(G/O)], \]
where $\delta_{S^{n+1} \times S^1} \colon [S^n \times S^1, G/O] \cong [S^{n+1} \times S^1, B(G/O)]$ is the boundary map in the
Mayer-Vietoris sequence for the decomposition 
$S^{n+1} \times S^1 = (D^{n+1} \times S^1) \cup_{\id} (D^{n+1} \times S^1)$.
Let 
$c_{S^{n+1}} \colon S^n \times S^1 \to S^{n+1}$ be the degree one collapse map.  Since the top cell stably splits off 
$S^{n} \times S^1$, the induced homomorphism 
$c_{S^{n+1}}^* \colon \pi_{n+1}(G/O) \to [S^n \times S^1, G/O]$
is a split injection.
Now since $\delta_{S^{n+1} \times S^1}$ is an isomorphism, to show that 
$$[B(i) \circ \nu_{\ZZ}] \neq 0 \in [\ZZ, B(G/O)],$$ it suffices to prove the following

\begin{lemma}  \label{lem:sni}
$f_n^*(\Nv(\pa))= c_{S^{n+1}}^*([\eta \cdot x_j]) \in [S^n \times S^1, G/O]$.
\end{lemma}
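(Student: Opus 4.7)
The plan is to combine Lemma~\ref{lem:pinch} with an explicit framed-bordism computation, using the hypothesis $2 x_j = 0$ to annihilate the unwanted summand.

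First, apply Lemma~\ref{lem:pinch} with $t = s\colon \PP^{n+1} \hookrightarrow \VV$ and $y = \pi \circ \alpha$, combined with diagram~\eqref{eq:FSES}, to obtain
\[ \Nv(\pa) = i_* \bigl([1] \ast (\DD(\rho_y) \circ s^!)\bigr) \in [\VV, G/O]. \]
Since $\xi$ is orientable and any orientable rank-$(n+1)$ vector bundle over $S^1 = \PP^1$ is trivial, $\xi|_{\PP^1}$ is trivial; hence $\nu_s|_{\PP^1}$ is trivial of rank~$n$ and $T(\nu_s|_{\PP^1}) = \Sigma^n S^1_+ \simeq S^n \vee S^{n+1}$. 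Accordingly, $s^! \circ f_n$ factors through $T(\nu_s|_{\PP^1})$ as the cofibre-collapse of the section $\{e_0\} \times S^1 \hookrightarrow S^n \times S^1$. By naturality of $\DD$, the restriction of $\DD(\rho_y)$ to $T(\nu_s|_{\PP^1})$ is the $\DD$-dual of the transverse-preimage framed-bordism class
\[ [N, g, F] \in \Omega_{n+1}^{fr}(\PP^1) \cong \pi^S_{n+1}(\PP^1_+) \cong \pi^S_{n+1} \oplus \pi^S_n, \]
where $N := \alpha^{-1}(S^1) \subset S^{2n+1}$ for $S^1 = \pi^{-1}(\PP^1) \subset S^{n+1}$ and $g := \pi|_{S^1} \circ \alpha|_N$.

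Next, compute $[N, g, F]$. The reduced-$\pi^S_n$ component is the framed-bordism class of $g^{-1}(\mathrm{pt})$ for a generic $\mathrm{pt} \in \PP^1$, namely two disjoint copies of $\alpha^{-1}(p) \subset S^{2n+1}$ (the preimages of $\{p, -p\} = \pi^{-1}(\mathrm{pt})$); hence it represents $2 x_j = 0$. The augmentation component is the framed-bordism class $[N] \in \pi^S_{n+1}$. By multiplicativity of Pontryagin--Thom under transverse preimage, $[N] = x_j \cdot [S^1]$, where $[S^1] \in \pi^S_1 = \bbZ/2$ is the framed-bordism class of $S^1 = \pi^{-1}(\PP^1) \subset S^{n+1}$ with the framing pulled back via~$\pi$ from a trivialization of $\nu_{\PP^1 \subset \PP^{n+1}} = n\cdot\eta|_{\PP^1}$. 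A direct comparison of this trivialization with the Seifert framing via a winding-number / Hopf-construction argument identifies $[S^1]$ as the generator $\eta \in \pi^S_1$. Hence $[N] = \eta \cdot x_j$, and
\[ [N, g, F] = (\eta \cdot x_j,\, 0) \in \pi^S_{n+1} \oplus \pi^S_n. \]

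Finally, pushforward along $j\colon S^1 \hookrightarrow S^n \times S^1$ produces the pure-augmentation class $\eta \cdot x_j \in \pi^S_{n+1}((S^n \times S^1)_+)$. Under the Spanier--Whitehead duality $\DD$, the augmentation summand (from the $0$-cell) is paired with the top-cell $\pi^S_{n+1}$ summand of $[S^n \times S^1, \qsna_0]$, so $\DD$ sends this class to the composition $S^n \times S^1 \xrightarrow{c_{S^{n+1}}} S^{n+1} \xrightarrow{\eta \cdot x_j} \qsna_0$. Applying $[1] \ast$ and $i\colon SG \to G/O$ then gives $f_n^*(\Nv(\pa)) = c_{S^{n+1}}^*([\eta \cdot x_j])$, as required. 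The main obstacle is the identification $[S^1] = \eta$, which rests on a careful comparison of the pulled-back trivialization with the Seifert framing; this is likely the step where the hypothesis $w_2(\xi) = 0$ enters, by pinning down the relevant spin structure on $\nu_{\PP^1}$ and thereby the framing on $S^1$.
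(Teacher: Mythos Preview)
Your overall strategy mirrors the paper's: apply Lemma~\ref{lem:pinch}, restrict along $f_n$ to the two bottom cells $S^n \vee S^{n+1}$ of $T(\nu_s)$, and identify the result as $(\eta\cdot x_j,\,0)$ in $\pi^S_{n+1}\oplus\pi^S_n$.  The paper carries out this last identification by passing to the $S$-dual side---projecting $\rho_y$ to the top two cells $T(\zeta)/T(\zeta_{n-1})$---and proving Lemma~\ref{lem:rho_y} via a Steenrod-square computation in $T(\wh\zeta)$ (Lemma~\ref{lem:T(zeta)}).  That $Sq^2$ computation is precisely where the hypothesis $w_2(\xi)=0$ (equivalently $w_2(\zeta)=0$) is used: one checks $Sq^2(x^nU)=x^{n+2}U$ in $H^*(T(\wh\zeta);\bZ/2)$, and this fails if $w_2(\zeta)\neq 0$.

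Your geometric route has a gap at exactly this point.  You describe the framing $F$ on $N$ (and hence on the circle $S^1=\pi^{-1}(\PP^1)$) as ``pulled back via $\pi$ from a trivialization of $\nu_{\PP^1\subset\PP^{n+1}}$''.  But the class you need is the $S$-dual of $\DD(\rho_y)|_{T(\nu_s|_{\PP^1})}$, and by Atiyah duality this restriction is dual to the wrong-way map $T(\zeta)\to T\bigl((\zeta\oplus\nu_{\PP^1\subset\PP^{n+1}})|_{\PP^1}\bigr)$.  So the relevant bordism group is $\Omega_{n+1}\bigl(\PP^1,(\zeta\oplus\nu_{\PP^1\subset\PP^{n+1}})|_{\PP^1}\bigr)$, and identifying it with $\Omega^{fr}_{n+1}(\PP^1)$ requires choosing a trivialization of the stably trivial bundle $(\zeta\oplus\nu_{\PP^1\subset\PP^{n+1}})|_{\PP^1}$.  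The framing on $S^1$, and hence whether $[S^1]$ equals $\eta$ or $0$ in $\pi^S_1$, depends on this choice---and the choice depends on $\zeta$, hence on $\xi$.  Your framing description omits the $\zeta$-contribution entirely, so your computation cannot detect the dependence on $w_2(\xi)$; the promised ``winding-number/Hopf-construction'' argument would have to incorporate $\zeta$ to succeed, and at that point it becomes equivalent to the paper's $Sq^2$ calculation.  As written, the identification $[S^1]=\eta$ is asserted rather than proved, and the concluding remark that $w_2(\xi)=0$ ``likely'' enters here is an admission that this step is incomplete.
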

We now prepare to give the proof of Lemma \ref{lem:sni}.
The statement amounts to showing that the diagram
\[ \xymatrix{S^n \times S^1\ar[r]^(0.6){f_n}\ar[d]_{c_{S^{n+1}}} 
& V \ar[d]^{\Nv(\pa)}\cr
S^{n+1} \ar[r]^{[\eta\cdot x_j]} & G/O} \]
commutes up to homotopy.
By Lemma \ref{lem:pa_is_tangential} there is a tangential normal map $(\VV , \pa , b)$ covering 
$\pa  \colon \VV  \to \VV$ and so by \eqref{eq:FSES}, $\Nv(\pa)$ factorises as 
\[ \Nv(\pa)  \colon \VV \xra{\Nt(b)} SG \xra{~i~} G/O, \]
where $\Nt(b)$ is the tangential normal invariant of $(\VV, \pa, b)$
and $i$ is the canonical map.
From Definition \ref{def:p_alpha} and the proof of Lemma \ref{lem:pinch}, we conclude that
$(\VV, \pa, b)$ is normally bordant to the disjoint union of tangential normal maps 
$(\VV, \Id, \Id) \sqcup (S^{2n+1}, x, b_x)$.
To describe the bundle map $(x, b_x) \colon \nu_{S^{2n+1}} \to \nu_\VV$,
we fix the notation $\zeta : = s^*(\nu_\VV)$.  Then $(b_x, x)$ factorises as in the following diagram,
\[ \xymatrix{  \nu_{S^{2n+1}} \ar[d] \ar[r]^{b_\alpha} & \pi^*(\zeta) \ar[d] \ar[r]^(0.55){b_{\pi}} & 
\zeta \ar[d] \ar[r]^(0.4){b_s} & T(\nu_V) \ar[d] \\
S^{2n+1}  \ar[r]^{\alpha} & S^{n+1} \ar[r]^(0.45){\pi} & \PP^{n+1} \ar[r]^(0.55){s} & V,   }   \]
%
%
%
%
where $b_s \colon \zeta \to \nu_{\VV }$ is the canonical bundle map 
and $b_\pi \colon \pi^*(\zeta) \to \zeta$ and $b_\alpha \colon \nu_{S^{2n+1}} \to \pi^*(\zeta)$, 
are bundle maps covering $\pi$ and $\alpha$ respectively.   
We set $y := \pi \circ \alpha$ and $b_y : = b_\pi \circ b_\alpha$, 
and focus on the homotopy class 
\[ \rho_y := T(b_y)_*(\rho_{S^{2n+1}}) \in \pi_{2n+k+1}(T(\zeta)) \]
%
%
because $\Nt(b)$
is determined by $\rho_y$ according to Lemma \ref{lem:pinch}.
%
%

Giving a precise description of $\rho_y$
is a hard problem since the Thom space $T(\zeta)$ has many cells and
so we focus only on the top two cells of $\zeta$.  Let $\zeta_{n-1}$ be the restriction of $\zeta$ to 
$\PP^{n-1} \subset \PP^{n+1}$ and consider the map 
\[  c_{T(\zeta_{n-1})} \colon T(\zeta) \to T(\zeta)/T(\zeta_{n-1}) \simeq S^{n+k} \vee S^{n+k+1} \]
which collapses all but the top two cells of $T(\zeta)$.  The following key computational lemma 
is a consequence of the assumption $w_2(\xi) = 0$ in Theorem \ref{thm:Spivak}.  
We defer its proof until after the proof of Lemma \ref{lem:sni}.

\begin{lemma} \label{lem:rho_y}
$(c_{T(\zeta_{n-1})})_*(\rho_y) = (\eta \cdot x_j, 0) \in \pi_{2n+k+1}(T(\zeta)/T(\zeta_{n-1})) \cong \pi_{n+1}^S \oplus \pi_n^S$.
\end{lemma}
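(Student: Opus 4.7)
The plan is to decompose $\rho_y$ via the factorization $b_y = b_\pi \circ b_\alpha$ and then compute the two components of $(c_{T(\zeta_{n-1})})_*(\rho_y)$ under the stable decomposition $T(\zeta)/T(\zeta_{n-1}) \simeq S^{n+k} \vee S^{n+k+1}$. Since $\xi$ is admissible, $\pi^*(\gamma)$ is stably trivial, and since $\pi^*(\eta)$ is trivial the bundle $\pi^*(\nu_{\PP^{n+1}}) = (n+2)\pi^*(\eta)$ is also trivial; hence $\pi^*\zeta$ is stably trivial. Choosing a compatible stable trivialization, I will identify $T(b_\alpha)_*(\rho_{S^{2n+1}})$ with $\Sigma^k\alpha \colon S^{2n+k+1} \to S^{n+k+1} \simeq T(\pi^*\zeta)$, so that $\rho_y = T(b_\pi) \circ \Sigma^k\alpha$. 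The lemma then reduces to computing the stable class of $\bar\pi := c_{T(\zeta_{n-1})} \circ T(b_\pi) \colon S^{n+k+1} \to S^{n+k} \vee S^{n+k+1}$ and post-composing with $\Sigma^k\alpha$.

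Write $[\bar\pi] = (a, b) \in \pi_1^S \oplus \pi_0^S = \bZ/2 \oplus \bZ$. Then $(c_{T(\zeta_{n-1})})_*(\rho_y) = (a \cdot x_j, b \cdot x_j) \in \pi_{n+1}^S \oplus \pi_n^S$. Since $2x_j = 0$ by hypothesis, it suffices to show $b = 2$ and $a = \eta$. For the integer $b$, I will project $\bar\pi$ onto the top cell: via the Thom isomorphism this is the degree of the composite $S^{n+1} \xrightarrow{\pi} \PP^{n+1} \to \PP^{n+1}/\PP^n = S^{n+1}$, which equals $2$ because $n+1$ is odd (so $\PP^{n+1}$ is orientable) and $\pi$ is the orientation double cover, while the collapse has degree one. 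Hence $b \cdot x_j = 2 x_j = 0$, giving the vanishing of the second coordinate.

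The main step, and the principal obstacle, is the identification $a = \eta$. The hypothesis $w_2(\xi) = 0$, equivalently $w_2(\zeta) = 0$, will be used here to guarantee that the Thom complex of $\zeta$ over $\PP^{n+1}/\PP^{n-1}$ stably splits off the $S^{n+k}$ summand, so that the projection $T(\zeta)/T(\zeta_{n-1}) \to S^{n+k}$ corresponds, via the Thom isomorphism on the bottom cell, to a stable map $\PP^{n+1}/\PP^{n-1} \to S^n$. Composing with $S^{n+1} \xrightarrow{\pi} \PP^{n+1} \to \PP^{n+1}/\PP^{n-1}$ yields a stable map $S^{n+1} \to S^n$ whose class in $\pi_1^S$ is exactly $a$. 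The identification $a = \eta$ will then follow from the classical EHP/Kahn--Priddy picture: the orientation double cover $\pi$, viewed stably and projected onto the next-to-top cell of $\PP^{n+1}$, carries the generator of $\pi_1^S$. Carrying out this last identification cleanly, while keeping track of signs and of the specific stable splitting enabled by $w_2(\xi) = 0$, is the technical heart of the argument.
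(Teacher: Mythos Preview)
Your overall strategy matches the paper's: factor $\rho_y$ through $T(b_\pi)$, identify the $b_\alpha$-part with the stable map $x_j = \Sigma(\alpha)$, and reduce to computing the class $[\bar\pi] = (a,b) \in \pi_1^S \oplus \pi_0^S$ of $c_{T(\zeta_{n-1})} \circ (T(b_\pi)/S^k)$. Your degree argument for $b=2$, hence $b\cdot x_j = 0$, is fine and is essentially what the paper does (there via the computation $H_{n+k+2}(T(\widehat\zeta)) \cong \bZ/2$).

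The gap is in the identification $a=\eta$, and your plan for it has two problems. First, you misplace the role of the hypothesis $w_2(\xi)=0$. The wedge splitting $T(\zeta)/T(\zeta_{n-1}) \simeq S^{n+k}\vee S^{n+k+1}$ already follows from $w_1(\zeta)=0$ (an integral homology computation shows both groups are $\bZ$, forcing the attaching map to have degree $0$). What $w_2(\zeta)=0$ actually controls is the value of $a$: with $w_2\neq 0$ one would get $a=0$ instead of $a=\eta$. Second, your attempt to transport the problem to the base via ``the Thom isomorphism on the bottom cell'' does not work as stated: $\PP^{n+1}/\PP^{n-1}$ is the mod~$2$ Moore space $S^n\cup_2 e^{n+1}$ and does \emph{not} stably split, so the projection $T(\zeta)/T(\zeta_{n-1}) \to S^{n+k}$ does not correspond to any map $\PP^{n+1}/\PP^{n-1}\to S^n$. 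The splitting of the Thom quotient is genuinely a feature of the twist by $\zeta$, and the invocation of ``EHP/Kahn--Priddy'' is too vague to pin down $a$.

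The paper's device here is clean and worth adopting. Extend $\zeta$ to a stable bundle $\widehat\zeta$ over $\PP^{n+2}$ and let $\phi\colon S^{n+k+1}\to T(\zeta)$ be the attaching map of the top cell of $T(\widehat\zeta)$. A direct comparison of the Thom-space maps induced by the double covers $\pi$ and $\pi_{n+2}$ shows that $(c^0)_*(\phi)=[T(b_\pi)/S^k]$, so your $\bar\pi$ coincides with $(c_{T(\zeta_{n-1})})_*(\phi)$. Now compute in $H^*(T(\widehat\zeta);\bZ/2)$: using $w_1(\zeta)=w_2(\zeta)=0$ and $n=2^{j+1}-2$, the Cartan formula gives $Sq^2(x^nU)=x^{n+2}U\neq 0$, so $Sq^2$ is nontrivial from the $(n+k)$-cell to the $(n+k+2)$-cell of $T(\widehat\zeta)/T(\zeta_{n-1})$. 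Since $Sq^2$ detects $\eta$ in the attaching map, this forces $a=\eta$. Composing with $x_j$ then yields $(\eta\cdot x_j,\,2x_j)=(\eta\cdot x_j,0)$, as required.
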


\begin{proof}[The proof of  Lemma \ref{lem:sni}]  
%
%
By Lemma \ref{lem:pinch} and the construction of $f_n \colon S^n \times S^1 \to \VV$,
there is a commutative diagram
\begin{equation} \label{eq:f_n}
 \xymatrix{  S^n \times S^1 \ar[d]^(0.45){c_{S^n \vee S^{n+1}}} \ar[rr]^(0.55){f_n} & & \VV \ar[d]^(0.45){s^!} \ar[rr]^{\Nt(b)} 
& & SG \ar[d]^(0.45)= \\
S^n \vee S^{n+1} \ar[rr]^(0.575){i_{T(\nu_s)}} & & T(\nu_s) \ar[rr]^{\DD(\rho_y)} & & SG \ar[rr]^i && G/O ,   }  
\end{equation}
where $c_{S^n \vee S^{n+1}}$
is the map collapsing $S^1$, $i_{T(\nu_s)}$
is the inclusion of the bottom two cells of the Thom space $T(\nu_s)$
and we recall that $\DD(\rho_y)$ is the adjoint of the $\sw$-dual of $\rho_y$ defined as in
Lemma \ref{lem:normal_invariant_1}.

Since $c_{S^{n+1}} \colon S^n \times S^1 \to S^{n+1}$ factors over $c_{S^n \vee S^{n+1}}$ is the obvious way,
to prove Lemma \ref{lem:sni},  it will be enough to understand the map 
$\DD(\rho_y) \circ i_{T(\nu_s)} \colon S^n \vee S^{n+1} \to SG$.
Since $\zeta = s^*(\nu_\VV)$, the $\sw$-dual of $T(\nu_s)$ is $T(\zeta)$ by Lemma \ref{lem:Tnut} \eqref{it:SWT(b_t)}.
In particular the $\sw$-dual of $i_{T(\nu_s)}$ is $c_{T(\zeta_{n-1})}$ and there is a commutative diagram
with rows of stable maps related by $\sw$-duality: 
\[  \xymatrix{  S^n \vee S^{n+1} \ar[d]^(0.45){\SW}  & & T(\zeta) \ar[d]^(0.45){\SW} \ar[ll]_(0.4){c_{T(\zeta_{n-1})}}
 & & S^{2n+1} \ar[d]^(0.45){\SW} \ar[ll]_(0.45){{\rho_y}} \\
S^{n+1} \vee S^n \ar[rr]^(0.55){i_{T(\nu_s)}} & & T(\nu_s) \ar[rr]^(0.5){\SW(\rho_y)} & & S^0 . } \] 
%

%
By Lemma \ref{lem:rho_y}, the composition $c_{T(\zeta_{n-1})} \circ \rho_y$ ignores the $S^{n+1}$ factor
of the target wedge and maps to $S^n$ via $\eta \circ x_j$.  It follows that 
$\SW(\rho_y) \circ i_{T(\nu_s)}$ is given via projecting to $S^{n+1}$ and mapping with $\eta \cdot x_j$.
Passing to the adjoint of $\SW(\rho_y)$, $\DD(\rho_y)$,
it follows that $i_{T(\nu_s)} \circ \DD(\rho_y)$ is null homotopic when restricted to $S^n$, and 
represents the homotopy class $\eta \cdot x_j \in \pi_{n+1}(QS^0_0) = \pi_{n+1}^S$
on $S^{n+1}$.  The maps $[1]\ast$ and $i$ carry this homomtopy class to the element $[\eta \cdot x_j] \in \pi_{n+1}(G/O) = \Coker(J_{n+1})$.  
The fact that $\eta(\pa) = i \circ \Nt(p)$ and the commutative diagram \eqref{eq:f_n} now give the proof 
of Lemma \ref{lem:sni}.
\end{proof}

Next we turn to the proof of Lemma \ref{lem:rho_y}.
Let us first establish some basic facts about the stable bundle $\zeta$.
Recall from the proof of Lemma \ref{lem:pa_is_tangential}, that there is a bundle isomorphism
%
%
%
%
\[  \zeta = s^*(\nu_{\VV}) \cong \nu_{\PP^{n+1}} \oplus (-\gamma), \]
where $\gamma$ is the stable bundle defined by $\xi$.
%
\begin{lemma} \label{lem:w2}
$w_1(\zeta) = w_2(\zeta) = 0$.
\end{lemma}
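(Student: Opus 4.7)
The plan is to exploit the stable bundle identity $\zeta \cong \nu_{\PP^{n+1}} \oplus (-\gamma)$ together with the Whitney product formula for Stiefel--Whitney classes. Since $n = 2^{j+1}-2$, the crucial numerical coincidence $n+2 = 2^{j+1}$ will make the normal bundle of $\PP^{n+1}$ invisible at the level of mod~$2$ characteristic classes, reducing the computation to controlling the low Stiefel--Whitney classes of $\gamma$.

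First I would compute $w(\nu_{\PP^{n+1}})$. Recall that $\tau_{\PP^{n+1}} \oplus \epsilon^1 \cong (n{+}2)\eta$ where $\eta$ is the canonical line bundle, so stably $\nu_{\PP^{n+1}} \sim -(n{+}2)\eta$ and hence
\[
w(\nu_{\PP^{n+1}}) = (1+x)^{-(n+2)} \in H^*(\PP^{n+1};\bZ/2),
\]
where $x$ generates $H^1(\PP^{n+1};\bZ/2)$. Since $n+2 = 2^{j+1}$ is a power of two, the Frobenius identity gives $(1+x)^{n+2} = 1 + x^{n+2}$, and $x^{n+2} = 0$ in $H^*(\PP^{n+1};\bZ/2)$. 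Thus $(1+x)^{n+2} = 1$ and consequently $w(\nu_{\PP^{n+1}}) = 1$.

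Next I would handle $-\gamma$. The bundle $\xi$ is orientable (as noted in the paragraph preceding the Mayer--Vietoris diagram \eqref{eq:MV}, where we used that $\xi|_{S^1}$ is trivial), so $w_1(\xi) = 0$. By the standing hypothesis of Theorem~\ref{thm:Spivak}, $w_2(\xi) = 0$. Passing to the stable class $\gamma$ preserves $w_1$ and $w_2$, and since $w(\gamma)\cdot w(-\gamma) = 1$, the Whitney formula inductively forces $w_1(-\gamma) = w_1(\gamma) = 0$ and $w_2(-\gamma) = w_1(\gamma)^2 + w_2(\gamma) = 0$.

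Combining these, $w(\zeta) = w(\nu_{\PP^{n+1}})\cdot w(-\gamma) = w(-\gamma)$, and in particular $w_1(\zeta) = 0$ and $w_2(\zeta) = 0$, which is the required conclusion. There is no real obstacle here; the only nontrivial input is the fact that $n+2$ is a power of two, and the lemma is a direct consequence of the Whitney sum formula.
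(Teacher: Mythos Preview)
Your proof is correct and follows essentially the same route as the paper's: both use that $n+2=2^{j+1}$ to kill $w_1$ and $w_2$ of $\nu_{\PP^{n+1}}$, invoke the hypothesis $w_2(\xi)=0$, and combine via the Whitney sum formula. The only cosmetic difference is that the paper deduces $w_1(-\gamma)=0$ from the orientability of $\VV$ (together with $\nu_{\VV}=\pi_\xi^*(\nu_{\PP^{n+1}})\oplus\pi_\xi^*(-\gamma)$ and injectivity of $\pi_\xi^*$ on $H^1$), whereas you cite the orientability of $\xi$ directly; these are equivalent since $w_1(\tau_{\PP^{n+1}})=0$. One small presentational point: in the passage you cite near~\eqref{eq:MV}, the implication runs the other way---$\xi$ orientable is the premise and $\xi|_{S^1}$ trivial is the conclusion---so your parenthetical reads slightly backwards, but this does not affect the argument.
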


\begin{proof}
Since $n{+}2$ is a power of two and $\nu_{\PP^{n+1}} = -(n{+}2) \cdot \eta$, 
we have the equality $w_1(\nu_{\PP^{n+1}}) = w_2(\nu_{\PP^{n+1}}) = 0$.  
Recall that $\pi_\xi \colon \VV \to \PP^{n+1}$ is the bundle projection.
Since $\VV $ is orientable and $\nu_{\VV } = \pi_\xi^*(\nu_{\PP^{n+1}}) \oplus \pi_\xi^*(-\gamma)$, 
it follows that $w_1(-\gamma) = 0$ and so $w_2(-\gamma) = w_2(\gamma) = 0$, where the last equality holds by assumption.
The Cartan formula now gives $w_1(\zeta) = w_1(\nu_{\PP^{n+1}}) + w_1(-\gamma) = 0$ and so
$ w_2(\zeta) = w_2\bigl( \nu_{\PP^{n+1}} \oplus (-\gamma) \bigr) = 0.$
%
\end{proof}

Since $\zeta$ is a stable real vector bundle over $\PP^{n+1}$ it has an extension $\widehat \zeta$ to $\PP^{n{+}2}$,
and there is a homotopy equivalence
\[  T(\wh \zeta) \simeq T(\zeta) \cup_\phi e^{n+k+2}, \]
where $\phi \colon S^{n+k+1} \to T(\zeta)$ is the attaching map of the top cell of $T(\wh \zeta)$.  
We shall establish
two important facts about the homotopy class of $\phi$ in Lemma \ref{lem:T(zeta)} below.  
%
%
Let 
\[ c^0 \colon T(\zeta) \to T(\zeta)/S^k \]
be the map collapsing the Thom cell of $T(\zeta)$ to a point.
In the proof of Lemma \ref{lem:pa_is_tangential} we proved that $\pi^*(\zeta)$ is trivial.
Hence there is a homotopy equivalence 
$T(\pi^*(\zeta)) \simeq S^k \vee S^{n+k+1}$ and the bundle map $b_\pi \colon \pi^*(\zeta) \to \zeta$ induces a map
\[  T(b_\pi)/S^k \colon S^{n+k} \to T(\zeta)/S^k .\]

\begin{lemma} \label{lem:T(zeta)}
The homotopy class $[\phi] \in \pi_{n+k+1}(T(\zeta))$ satisfies:
\begin{enumerate}
\item \label{it:T(zeta)1}
$(c^{0})_*(\phi) = [T(b_\pi)/S^k] \in \pi_{n+k+1}(T(\zeta)/S^k)$,
\item \label{it:T(zeta)2}
$(c_{T(\zeta_{n-1})})_*(\phi) = (\eta, 2) \in \pi_{n+k+1}(S^{n+k} \vee S^{n+k+1}) \cong \pi_1^S \oplus \pi_0^S$.
\end{enumerate}
\end{lemma}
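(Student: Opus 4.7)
The plan is first to identify $\phi$ as the top-cell attachment in the Thom space $T(\wh\zeta)$ of an extension of $\zeta$ to $\PP^{n+2}$, and then to establish (i) by naturality of a bottom-cell collapse and (ii) by combining a cellular degree computation with a Steenrod $Sq^2$ argument. Choose an extension $\wh\zeta$ of $\zeta$ to $\PP^{n+2} = \PP^{n+1} \cup_\pi D^{n+2}$. Since both $\wh\zeta|_{D^{n+2}}$ and $\pi^*\zeta$ are trivial (the latter by the proof of Lemma~\ref{lem:pa_is_tangential}), the pushout description of $T(\wh\zeta)$ identifies $\phi$ with $T(b_\pi)$ restricted to the top summand $S^{n+k+1}$ of $T(\pi^*\zeta) \simeq S^k \vee S^{n+k+1}$, and exhibits $T(\wh\zeta) \simeq T(\zeta) \cup_\phi D^{n+k+2}$ as the mapping cone of $\phi$. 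Part (i) then follows from naturality: in the commutative square comparing $T(b_\pi)$ and $c^0 \circ T(b_\pi)$, the left vertical collapse restricts to the identity on $S^{n+k+1}$, so $c^0 \circ \phi = T(b_\pi)/S^k$.

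For (ii), the cellular boundary in $T(\zeta)$ from the top Thom cell to the one below coincides, via $w_1(\zeta) = 0$ and the Thom isomorphism, with the cellular boundary $1 + (-1)^{n+1}$ in $\PP^{n+1}$; for $n = 2^{j+1}-2$ even this vanishes, and hence $T(\zeta)/T(\zeta_{n-1}) \simeq S^{n+k} \vee S^{n+k+1}$. Writing $(c_{T(\zeta_{n-1})})_*(\phi) = (u, v)$, further projection onto $T(\zeta)/T(\zeta_n) = S^{n+k+1}$ collapses the $S^{n+k}$-component and recovers $v$ as the degree of $\pi \colon S^{n+1} \to \PP^{n+1}/\PP^n = S^{n+1}$. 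Because the antipodal map on $S^{n+1}$ has degree $(-1)^{n+2} = +1$ for $n$ even, the two open hemispheres of $S^{n+1}$ cover the top cell of $\PP^{n+1}$ with matching orientation, yielding $v = 2$.

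To pin down $u \in \pi_1^S = \{0, \eta\}$, I compute $Sq^2$ on $H^*(T(\wh\zeta); \bbZ/2)$. Using $H^*(\PP^{n+2}; \bbZ/2) = \bbZ/2[x]/(x^{n+3})$ and the Thom isomorphism, $H^*(T(\wh\zeta); \bbZ/2)$ is a free rank-one module over this ring on the Thom class $U$. By Lemma~\ref{lem:w2} we have $w_2(\zeta) = 0$, and since $H^2(\PP^{n+2}; \bbZ/2) \to H^2(\PP^{n+1}; \bbZ/2)$ is an isomorphism, $w_2(\wh\zeta) = 0$ also. Wu's formula then gives $Sq^1 U = Sq^2 U = 0$, and the Cartan formula yields
\[
Sq^2(U x^n) \;=\; U \cdot Sq^2 x^n \;=\; \tbinom{n}{2}\, U x^{n+2} \;=\; U x^{n+2},
\]
using $\binom{2^{j+1}-2}{2} \equiv 1 \pmod 2$ for $j \geq 1$. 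Hence $Sq^2 \colon H^{n+k}(T(\wh\zeta); \bbZ/2) \to H^{n+k+2}(T(\wh\zeta); \bbZ/2)$ is non-zero, and by naturality remains non-zero on the three-cell complex $T(\wh\zeta)/T(\wh\zeta_{n-1}) \simeq (S^{n+k} \vee S^{n+k+1}) \cup_{(u, 2)} e^{n+k+2}$. Since mod $2$ the attaching $(u, 2)$ reduces to $(u, 0)$, and the subcomplex $S^{n+k} \cup_u e^{n+k+2}$ carries a non-trivial $Sq^2$ only when $u = \eta$ (as it is then stably $\Sigma^{n+k-2} \CP^2$), we conclude $u = \eta$.

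The principal difficulty lies in the Steenrod calculation and its geometric transfer: verifying the vanishing of $w_1$ and $w_2$ for the extension $\wh\zeta$, invoking the binomial-coefficient identity $\binom{2^{j+1}-2}{2} \equiv 1 \pmod 2$, and concluding $u = \eta$ from the non-triviality of $Sq^2$ on the three-cell quotient via the structure of its mapping cone.
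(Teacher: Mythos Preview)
Your proof is correct and follows essentially the same approach as the paper: a pushout/Thom-space identification for (i), and for (ii) a cellular/homological argument for the degree component together with the $Sq^2$ computation (using $w_1(\wh\zeta)=w_2(\wh\zeta)=0$ and the Cartan formula) to detect $\eta$. The only noteworthy difference is that for $v=2$ you compute the degree of $\pi\colon S^{n+1}\to\PP^{n+1}/\PP^n$ directly from the antipodal-map sign, whereas the paper reads it off from $H_*(T(\wh\zeta))$; your route is slightly more elementary. One small imprecision: your sentence ``identifies $\phi$ with $T(b_\pi)$ restricted to the top summand'' glosses over the boundary-collapse map $S^{n+k+1}\to T(\pi^*\zeta)$, but since both $c^0$ and $c_{T(\zeta_{n-1})}$ kill the bottom $S^k$, this does not affect either conclusion.
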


\begin{proof}
(i) Let $\pi_{n{+}2} \colon \PP^{n{+}2} \to S^{n{+}2}$ be the covering projection so that 
$\pi_{n{+}2}|_{\PP^{n+1}} = \pi \colon \PP^{n+1} \to S^{n+1}$.  The bundle maps $b_\pi$ and $b_{\pi_{n{+}2}}$ covering
$\pi$ and $\pi_{n{+}2}$ induce a commutative diagram of map of Thom spaces with Thom cells collapsed:
\[  \xymatrix{  T(\pi^*(\zeta))/S^k \ar[d] \ar[rr]^{T(b_\pi)/S^k} & & T(\wh \zeta)/S^k \ar[d] \\
(T(\pi^*(\zeta))/S^k) \cup (D^{n+k+2} \sqcup D^{n+k+2}) \ar[rr]^(0.57){T(b_{\pi_{n{+}2}})/S^k} & &
(T(\zeta)/S^k) \cup_{c^0 \circ \phi} D^{n+k+2}   }  \]
But the inclusion $T(\pi^*(\zeta)) \to T(\pi^*(\wh \zeta))$ is homeomorphic to the standard inclusion of 
a hypersphere $S^{n+k+1} \to S^{n+k+2}$
and $T(b_{\pi_{n{+}2}})$ maps the interior of each $D^{n+k+2}$ homeomorphically onto the interior of the 
single $D^{n+k+2}$ in its target.  Hence $T(b_\pi)/S^k$ is homotopic to $c^0 \circ \phi$.

(ii) The space $T(\wh \zeta)/T(\zeta_{n-1})$ is homotopy equivalent to a $3$ cell complex 
and so there is homotopy equivalence
\[ T(\wh \zeta)/T(\zeta_{n-1}) \simeq (S^{n+k} \vee S^{n+k+1}) \cup_{c_{T(\zeta_{n-1})} \circ \phi} D^{n+k+2}, \]
where we have use the homotopy equivalence $T(\zeta)/T(\zeta_{n-1})  \simeq S^{n+k} \vee S^{n+k+1}$.
If we define $j_{n+k+1} \colon S^{n+k} \vee S^{n+k+1} \to S^{n+k+1}$ to be the map collapsing $S^{n+k}$ to a point,
then the degree of $j_{n+k+1} \circ c_{T(\zeta_{n-1})} \circ \phi$ is determined by the homology group 
$H_{n+k+2}(T(\wh \zeta))$ which is isomorphic to $\bbZ/2$ since $\wh \zeta$ is non-orientable.
Choosing orientations appropriately, we have determined by the second component of $(c_{T(\zeta_{n-1})})_*([\phi])$.

We can read off the homotopy class of the second component fo $c_{T(\zeta_{n-1})} \circ \phi$ from the action of 
$Sq^2$ in $T(\wh \zeta)/T(\zeta_{n-1})$ since $Sq^2$ detects $\pi_1^S$.  

The collapse map 
$\wh c_{T(\zeta_{n-1})} \colon T(\wh \zeta) \to T(\wh \zeta)/T(\zeta_{n-1})$ induces an isomorphism on mod~2 cohomology 
in dimensions $n+k$ and $n+k+2$ and hence we can work in $H^*(T(\wh \zeta); \bbZ/2)$.
Let $x \in H^1(\PP^{n+1}; \bbZ/2)$ be a generator and let $U$ be the Thom class of $\wh \zeta$.  Then
$H^{n+k}(T(\wh \zeta); \bbZ/2)$ is generated by $x^n U$ and we compute 
%
\[ \quad Sq^2(x^n U) = x^{n{+}2} U, \]
since $n = 2^{j+1}-2$, $Sq^i(U) = w_i(\wh \zeta)U$ and $w_i(\wh \zeta) = w_i(\zeta) = 0$ for $i = 1, 2$.
This shows that $Sq^2$ maps non-trivially to the top cell of $T(\wh \zeta)/T(\zeta_{n-1})$ and it follows
that the second component of $(c_{T(\zeta_{n-1})})_*(\phi)$ is $\eta$.
\end{proof}



%

\begin{proof}[Proof of Lemma \ref{lem:rho_y}]
The map $\alpha \colon S^{2n+1} \to S^{n+1}$ stabilises to $x_j$ and  since $\pi^*(\zeta)$ is trivial, the induced map on
Thom complexes with Thom cells collapsed, 
\[ T(b_\alpha)/S^k \colon T(\alpha^*\pi^*(\zeta))/S^k \to T(\pi^*(\zeta))/S^k, \]
is identified with the $k$-fold suspension of $\alpha$.  But this map can
also be identified with the map $\rho_\alpha =T(b_\alpha)_*(\rho_{S^{2n+1}})$ composed with the collapse
of the Thom cell of $T(\pi^*(\zeta))$.   
We recall that $\rho_y = T(b_\pi)_*(\rho_\alpha)$ and
let $c_{T(\zeta_{n-1})}^0 \colon T(\zeta)/S^k \to T(\zeta)/T(\zeta_{n-1})$ denote
the collapse map, so that
$c_{T(\zeta_{n-1})} = c_{T(\zeta_{n-1})}^0  \circ c^0$.
Then applying Lemma \ref{lem:T(zeta)} we have
\begin{eqnarray*}  &(c_{T(\zeta_{n-1})})_*(\rho_y) &= 
(c^0_{T(\zeta_{n-1})})_*\bigl((c^0)_*(T(b_\pi)_*(\rho_\alpha)) \bigr) = 
(c^0_{T(\zeta_{n-1})})_*\bigl( (T(b_\pi)/S^k)_*(x_j) \bigr) \\ &&= 
(c_{T(\zeta_{n-1})})_*(\phi \circ x_j) = (\eta \cdot x_j, 0) .
\end{eqnarray*}
\end{proof}


\begin{remark}
Our assumption in Theorem D that $\WW  \to \PP^{n+1}$ is a smooth fibre bundle ensures that
$\WW$ is a smooth manifold with $\bd \WW = \VV$.  Hence $\MM$ is the twisted double
of a smooth manifold along a PL-homeomorphism, but is not smoothable.
Since $\wt \MM \cong_{PL} \MK^{2n{+}2}$, it is interesting to ask whether $\MM$ admits a smooth structure over some skeleton.

\begin{lemma} \label{lem:(n{+}1)_smooth}
The PL-manifold $M$ admits a smooth structure over its $(n{+}1)$-skeleton.
\end{lemma}
\begin{proof}
%
Let us denote the copies of $\WW$ used to build $M$ by $\WW_0$ and $\WW_1$.
If we collapse $\WW_0$ to a point then we obtain $\WW_1/\del \WW_1$, the Thom space of $\xi$.
Since $\xi$ has rank $(n{+}1)$, $T(\xi)$ has a $CW$-decomposition starting
from $S^{n+1}$ and attaching cells of dimension $(n{+}2)$ and higher.  
It follows that $M$ has a $CW$-decomposition with $(n{+}1)$-skeleton $\PP^{n+1} \vee S^{n+1}$
where $\WW_0$ thickens $\PP^{n+1}$.  Up to homotopy, the remaining $S^{n+1}$ is represented
by the union of the fibre discs in $\WW_0 \to \PP^{n+1}$ and $\WW_1 \to \PP^{n+1}$.
Let $D^{n+1}_1 \subset \WW_1$ be such a fibre and let $D^{n+1} \times D^{n+1}_1 \subset \WW_1$ be a tubular
neighbourhood of $D^{n+1}_1$ which meets $\del \WW_1$ at $D^{n+1} \times S^n_1$.
It is enough to show that the PL-manifold 
\[  \WW_2 : = \WW_0 \cup_{g^{-1}|_{D^{n+1} \times S^n_1}} (D^{n+1} \times D^{n+1}_1) \]
admits a smooth structure.  
By \cite[Theorem 5.3]{hirsch-mazur-book}, the obstruction to extending the smooth structure on $\WW_0$
to $\WW_2$ is an obstruction class
\[ \omega \in H^{n+1}(\WW_2, \WW_0; \pi_n(PL/O)) \cong \bbZ. \]
This obstruction is natural for coverings and $\omega$ pulls back to the obstruction class
$\wt \omega \in H^{n+1}(\wt \WW_2, \wt \WW_0; \pi_n(PL/O)) \cong \bbZ^2$
which we may identify as $\wt \omega = (\omega, \omega)$. 
Now $\wt M \cong \MK$ and $\wt \WW_2 \subset \MK$ is homotopy equivalent to a wedge of 
three $(n{+}1)$-spheres.  Since $\MK$ is smoothable away from a point, it follows that
$\wt \omega = 0$ and hence that $\omega = 0$.
\end{proof}

\end{remark}

%
%
\providecommand{\bysame}{\leavevmode\hbox to3em{\hrulefill}\thinspace}
\providecommand{\MR}{\relax\ifhmode\unskip\space\fi MR }
\providecommand{\MRhref}[2]{%
  \href{http://www.ams.org/mathscinet-getitem?mr=#1}{#2}
}
\providecommand{\href}[2]{#2}

\end{document}